\journal{}
\newtheorem{lemmaA}{Lemma}
\newtheorem{prop}{Proposition}
\begin{document}

\begin{frontmatter}

\title{A Generalized-Jacobi-Function Spectral Method for Space-Time Fractional Reaction-Diffusion Equations with Viscosity Terms}

\author {Zhe Yu, Boying Wu, Jiebao Sun$^{ *}$, Wenjie Liu}
\cortext[cor1]{Correspondence to: Jiebao Sun, Department of Mathematics, Harbin Institute of Technology, Harbin 150001, People's Republic of China (e-mail: sunjiebao@hit.edu.cn).}

\address{Department of Mathematics, Harbin Institute of Technology, Harbin 150001, P. R. China}

\newtheorem{corollary}{Corollary}[section]
\newtheorem{theorem}{Theorem}[section]
\newtheorem{proposition}{Proposition}[section]
\newtheorem{definition}{Definition}[section]
\newtheorem{lemma}{Lemma}[section]
\newtheorem{remark}{Remark}[section]
\numberwithin{equation}{section}
\setcounter{page}{1}
\catcode`@=12

%%%%%%%%%%%%%%%%%%%%%%%%%%%%abstract%%%%%%%%%%%%%%%%%%%%%%%%%

\begin{abstract}
In this work, we study a new spectral Petrov-Galerkin approximation of space-time fractional reaction-diffusion equations with viscosity terms built by Riemann-Liouville fractional-order derivatives. The proposed method is reliant on generalized Jacobi functions (GJFs) for our problems. The contributions are threefold: First, thanks to the theoretical framework of variational problems, the well-posedness of the problem is proved. Second, new GJF-basis functions are established to fit weak solutions, which take full advantages of the global properties of fractional derivatives. Moreover, the basis functions conclude singular terms, in order to solve our problems with given smooth source term. Finally, we get a numerical analysis of error estimates to depend on GJF-basis functions. Numerical experiments confirm the expected convergence. In addition, they are given to show the effect of the viscosity terms in anomalous diffusion.

\end{abstract}

%%%%%%%%%%%%%%%%%%%%%%%%%%%%keyword%%%%%%%%%%%%%%%%%%%%%%%%%%%%%%

\begin{keyword}
Viscosity term; Fractional derivative; Spectral method; Error estimate.
\end{keyword}

\end{frontmatter}

%%%%%%%%%%%%%%%%%%%%%%%%%%%1. Introduction%%%%%%%%%%%%%%%%%%%%%%%%%%

\section{Introduction}

Fractional partial differential equations (FPDEs) have been attached significance to their applications, including system biology, physics, chemistry and biochemistry, hydrology, finance etc \cite{IP}. Fractional-order derivatives and integrals enable the description of the memory and hereditary properties of different substances. Compare to the classical models, FPDEs lead to greater approval based on data gained in lab experiments \cite{IP}. Some famous FPDEs include the fractional Fokker-Planck equation \cite{BMK}, the time fractional Schr\"{o}dinger equation \cite{NM}, the fractional Ginzburg-Landau equation \cite{TZ}, the fractional quasi-geostrophic equation \cite{PG}, and the fractional Landau-Lifshitz equation \cite{GZ, PGZ} etc. As a kind of classical FPDEs, the space-time fractional diffusion equations have been successfully modeled in a great number of physical phenomena. The advanced applications contain the turbulent flow, chaotic dynamics charge transport in amorphous semiconductors, NMR diffusometry in disordered materials, and dynamics of a bead in polymer network \cite{LX2}. 

In recent decades viscosity terms have applied to many fields, such as mechanics \cite{LRF1, LRF2}, physics \cite{LFL}, chemistry engineering \cite{BR, RB} etc. Researchers built viscosity terms mainly by classical derivatives. More typically, Showalter and Ting \cite{ST} showed that $\Delta u_t$ could be used as a viscosity term. Recently, Mao and Shen \cite{MS} constructed a non-local viscosity term by a special order derivative for two water wave models, and the authors also gave a numerical study of their decays by a semi-implicit spectral deferred correction method.

Actually, only a few types of FPDEs could get an analytical solution. It seems to be more important and useful for investigating a both efficient and accurate numerical method for applications. A classical and popular method for solving one-direction fractional diffusion (time diffusion or space diffusion only) is to offer a finite difference, finite element or finite volume methods for discretizing the fractional part. Researchers aim to approximate fractional derivatives of the ideal points by using a small number of parameters instead of their global information. Zhang et al. \cite{ZSZ} proposed a compact alternating direction implicit scheme for solving fractional diffusion-wave equations in two dimensions. Gao and Sun \cite{GS} worked out the fractional sub-diffusion equations by a compact finite difference scheme. 
%The authors got a good consequence of convergence and stability. 
In order to reduce the cost of computation, Wang et al. \cite{WYZ} figured out the inhomogeneous Dirichlet boundary-value problems of space-fractional diffusion equations in finite element method. The space-fractional diffusion equations in fast finite volume methods and fast difference methods have been solved in \cite{WCW,JW}, the authors dropped require storage and computational cost from $O ( N^2 ) $ and $ O(N^3 )$ to $O ( N ) $ and $ O(N  \log  N )$. However, although researchers have got good results using local numerical methods, the approximations of the fractional parts largely waste their global properties. What is more, those local numerical methods are expensive to compute and invert because of their full and dense coefficient matrices. The costs of computation bring a power growth with the iteration increasing. 

As a typical global numerical method, the spectral method can offer greater global characteristics by way of constructing basis functions, rather than local numerical methods. More importantly, the spectral method has an exponential convergence with increasing dimension of the approximation. Shen et al. \cite{STW} came up with spectral methods depend on classical Jacobi polynomials and used in integer-order problems with Dirichlet, Neumann, and Robin (or mixed) boundary conditions. Guo et al. \cite{GSZ} generalized Jacobi and Laguerre polynomials, and proposed the generalized Gauss quadrature, which enlarges the research areas of singularity problems. Yu and Guo \cite{YG} worked out the fourth-order mixed inhomogeneous boundary value problems via spectral element method. The complexity of boundary conditions is extended. Wang et al. \cite{WZZ} suggested superconvergence points of Jacobi-Gauss-type spectral interpolation, which enhanced the convergence order in the neighborhood of those points. Sheng et al. \cite{SWG} led to a spectral method to solve nonlinear problems, the authors presented a multistep Legendre-Gauss spectral collocation method for nonlinear Volterra integral equations.

There is the truth that fractional definitions and spectral methods perform the same global feature. To settle the fractional problems, the spectral method using global basis functions seems more suitable for non-local problems. Li and Xu \cite{LX2, LX} proposed space-time spectral methods based on Jacobi polynomials and got a good convergence by classical Jacobi polynomials. Zhao and Zhang \cite{ZZ} got the superconvergence points of fractional spectral interpolation. Jiao et al. \cite{JWH} optimized spectral collocation methods, in order to reduce the computational cost of standard spectral collocation methods. Very recently, Chen et al. \cite{CSW,CSW2} put forward generalized Jacobi functions and Laguerre functions to fractional differential equations. At the same time, the authors kept the properties of their numerical solutions with the weak formulas. Mao and Shen \cite{MS2, MS3, MS4} utilized efficient spectral methods to solve some fractional problems, where there are pretty popular with researchers. 
%Bhrawy et al. \cite{EHB, HDB, BED} enlarged the research fields by shifted orthogonal polynomials. The authors showed great numerical methods in these fields. 

In particular, our method has the following prominent features:
\begin{itemize}
\item[(1)] We present a space-time global spectral method to approximate the FPDEs without discretization, which cut down the computational complexity.
\item[(2)] Our spectral basis functions consist of a combination of GJFs. Hence, the basis functions could fit the characteristics of trial space and test space.
\item[(3)] Thanks to the properties of GJFs, we could compute the Riemann-Liouville fractional derivatives of basis functions directly. At the same time, the straight calculation could reduce the computational cost relative to the finite approximations.
\item[(4)] By an accurate selection of trial space and test space, our space-time GJF-Petrov-Galerkin method performs a sparse enough linear system, which is cheap to solve our fractional problems.
\end{itemize}

In this paper, we adopt Riemann-Liouville fractional definitions due to the physics background of the equation. Our goal is mainly to solve a new space-time fractional reaction-diffusion equations with viscosity terms, by way of a new space-time GJF-Petrov-Galerkin spectral method. The rest of the paper is organized as follows. In Section 2, we shall put forward our problem and recall the definitions and several lemmas for constructing a weak formulation. The well-posedness of the problem is proved. In Section 3, we discuss in details about the space-time spectral method, including a numerical analysis of the error estimates. Numerical results and conclusions are shown in Section 4 and Section 5.

%%%%%%%%%%%%%%%%%%%%%%%%%%%%%%%%%2.Preliminaries%%%%%%%%%%%%%%%%%

\section{Problem and Weak Formulation}
Let $\Lambda  := ( { - 1,1} ),\ I := ( {0,T} )$ be standard space and time domains, and denote $Q_T:=\Lambda \times I$. Consider the following equation
\begin{equation}
\label{1-1} {_0}\partial _t^\alpha u( {x,t} ) - {_{-1}}\partial _x^{\beta}u( {x,t} ) - \varepsilon\cdot{_0}\partial _t^{\gamma} {_{-1}}\partial _x^{\mu }u( {x,t} ) + u( {x,t} ) = f( {x,t} ),\;( {x,t} ) \in {Q_T,}
\end{equation}
with initial value and Dirichlet boundary conditions
\begin{equation}
\label{1-2}u( {x,0} ) = 0,\; x \in \bar{\Lambda}, 
\end{equation}
\begin{equation}
\label{1-3}u( { - 1,t} ) = u( {1,t} ) = 0,\; t \in I,
\end{equation}
where $ \alpha, \gamma \in (0,1), \beta, \mu \in (1,2), \varepsilon \in [0,1] $.

\subsection{Preliminaries}

In this subsection, we concisely point out some definitions and lemmas about fractional Sobolev spaces with norms and inner products, and we chiefly discuss the existence and uniqueness based on these definitions and lemmas.

Let $c$ be a generally positive constant without any relationship with functions and discretization parameters. We use the expression $ A \lesssim (\gtrsim) B $ to represent $ A \leqslant  (\geqslant) cB $, 
% $ A \gtrsim B $ to mean that $ A \geqslant cB $, 
and $ A \cong B $ to stand for $ A \lesssim B \lesssim A $. Let $ \Theta $ be a normal domain like $ Q_T,\Lambda,I $ or $ \mathbb{R}$. $ {{L^{2}_{\omega}}} ( \Theta  ) $ space may be defined as a space of functions for which the 2nd power of the absolute value is Lebesgue integrable with weight function $\omega $. 
%$ \Theta $. 
The inner product and norm of $ {{L^{2}_{\omega}}} ( \Theta  ) $ can be defined by
$$ {{( u,v )}_{\omega,\Theta }} :=\int_{\Theta }{uv\omega\text{d}\Theta },\ \ {{\left\| u \right\|}_{\omega,\Theta }} :=( u,u )_{\omega,\Theta }^{\frac{1}{2}},\ \ \forall u,v\in {{L}^{2}_{\omega}}( \Theta  ) , \omega \in {{L}^{1}_{\text{loc}}}( \Theta  ). $$ 
%Similarly, let $ {H^m_{\omega}}( \Theta  )$ and $ H^m_{0,\omega}( \Theta  )\ $ be the ordinary Sobolev spaces with weight $\omega$, and their norms are denoted by $ {\left\| {\; \cdot \;} \right\|_{m,\omega,\Theta }}\ $.
Generally, we denote
$$ {{( u,v )}_{\Theta }}:={{( u,v )}_{\textbf{1},\Theta }}, $$
as weight function $\omega \equiv \textbf{1}$. We recall the definitions of some  Sobolev spaces (see \cite{IP,LX,EVR}). 
\begin{definition}[see \cite{LX,EVR}]
For real $ p \geqslant 0$, let
$$ {H^p} ( \mathbb{R} ) = \left\{ {\left. {\varphi( z )} \right|\varphi \in {L^2}( \mathbb{R} );\;{{( {1 + {{\left| \omega  \right|}^2}} )}^{\frac{p}{2}}}\mathscr{F}( \varphi )( \omega  ) \in {L^2}( \mathbb{R} )} \right\}, $$
endowed with the norm:
$$ {\left\| \varphi \right\|_{{H^p}( \mathbb{R}  )}} = {\left\| {{{( {1 + {{\left| \omega  \right|}^2}} )}^{\frac{p}{2}}}\mathscr{F}( \varphi )( \omega  )} \right\|_{{L^2}( \mathbb{R}  )}}, $$
where $ \mathscr{F}( \varphi ) $ denotes the Fourier transform of $ \varphi $. 
\end{definition}

For bounded domain $ \Omega = ({a,b}) $, which may present $ I $ or $\Lambda $, we get the following definitions.
\begin{definition}[see \cite{LX,EVR}]
For real $ p \geqslant 0$, we define the space:
$$ {H^p}( \Omega ) = \left\{ {\left. {\varphi \in {L^2}( \Omega )} \right|\exists \tilde \varphi \in {H^p}( \mathbb{R} ),\;{{\left. {\tilde \varphi} \right|}_{\Omega}} = \varphi} \right\},$$
with the norm:
$$ {\left\| \varphi \right\|_{{H^p}( \Omega  )}} = \mathop {\inf }\limits_{\tilde \varphi \in {H^p}( \mathbb{R} ),\;{{\left. {\tilde \varphi} \right|}_{\Omega}} = \varphi} {\left\| {\tilde \varphi} \right\|_{{H^p}( \mathbb{R}  )}}. $$
\end{definition}
\begin{definition}[see \cite{LX}]
Let $ C_0^\infty ( {\Omega} )$ be the space of smooth functions with compact support in $ {\Omega} $, and $ H_0^p( {\Omega} )$ represent the closure of $ C_0^\infty ( {\Omega} )$ with respect to norm $ {\left\| {\; \cdot \;} \right\|_{H^p ( {\Omega} ) }}$. Define
\begin{align*}
\begin{aligned}
_0{C^\infty }( \Omega ) & = \left\{ {\left. \varphi  \right|\varphi  \in {C^\infty }( \Omega )\ {\rm{with \  compact \  support \  in}}\ ( {a,b} ]} \right\},\\
^0{C^\infty }( \Omega ) & = \left\{ {\left. \varphi  \right|\varphi  \in {C^\infty }( \Omega )\ {\rm{with \  compact \ support \ in}}\ [ {a,b} )} \right\}.
\end{aligned}
\end{align*}
The space $ _0{H^p}( {\Omega} )$ (respectively, $ ^0{H^p}( {\Omega} )$) denotes the closure of $ _0{C^\infty }( {\Omega} )$ (resp. $^0{C^\infty }( \Omega )$) with respect to norm $ {\left\| {\; \cdot \;} \right\|_{H^p ( {\Omega} ) }}$. For the Sobolev space $ X $ with norm $ {\left\| {\; \cdot \;} \right\|_X}$, let
\begin{align*}
\begin{aligned}
{H^p}( {{\Omega};X} ) & := \left\{ {\varphi\left| {{{\left\| {\varphi( { \cdot ,t} )} \right\|}_X} \in {H^p}( {\Omega} )} \right.} \right\},\;p \geqslant 0,\\
_0{H^p}( {{\Omega};X} ) & := \left\{ {\varphi\left| {{{\left\| {\varphi( { \cdot ,t} )} \right\|}_X} \in {_0}{H^p}( {\Omega} )} \right.} \right\},\;p \geqslant 0,\\
^0{H^p}( {{\Omega};X} ) & := \left\{ {\varphi\left| {{{\left\| {\varphi( { \cdot ,t} )} \right\|}_X} \in {^0}{H^p}( {\Omega} )} \right.} \right\},\;p \geqslant 0,
\end{aligned}
\end{align*}
endowed with the norm:
$$ {\left\| \varphi \right\|_{{H^p}( {{\Omega};X} )}}: = {\left\| {{{\left\| {\varphi( { \cdot ,t} )} \right\|}_X}} \right\|_{H^p ( {\Omega} ) }}.$$
\end{definition}

We denote spaces:
\begin{align*}
{_0B^{s,\sigma ,\theta ,\rho }}( Q_T ) & := {{}_0}{H^s}( {I;{L^2}( \Lambda  )} ) \cap {L^2}( {I;H_0^\sigma ( \Lambda  )} ) \cap {H^\theta }( {I;H^\rho ( \Lambda  )} ), \\
{^0B^{s,\sigma ,\theta ,\rho }}( Q_T ) & := {{}^0}{H^s}( {I;{L^2}( \Lambda  )} ) \cap {L^2}( {I;H_0^\sigma ( \Lambda  )} ) \cap {H^\theta }( {I;H^\rho ( \Lambda  )} ),
\end{align*}
equipped with the norm:
\[{\left\| \varphi  \right\|_{{B^{s,\sigma ,\theta ,\rho }}( Q_T )}}: = {( {\left\| \varphi  \right\|_{{H^s}( {I;{L^2}( \Lambda  )} )}^2 + \left\| \varphi  \right\|_{{L^2}( {I;H_0^\sigma ( \Lambda  )} )}^2 + \left\| \varphi  \right\|_{{H^\theta }( {I;H^\rho ( \Lambda  )} )}^2} )^{\frac{1}{2}}}.\]
It can be verified that $ {_0B^{s,\sigma ,\theta ,\rho }}( Q_T  )$ and ${^0B^{s,\sigma ,\theta ,\rho }}( Q_T ) $ are both Banach spaces.

We will also recall some definitions of fractional derivatives and related properties.

\begin{definition}[Fractional integrals and derivatives, see \cite{IP}]%(see \cite{IP}) 
\label{def2-1}
For real $ p > 0 $, the left and right fractional integrals are respectively defined as
\begin{align}
\begin{aligned}
_aI_z^p \varphi ( z ) = \frac{1}{{\Gamma ( p  )}}\int_a^z {\frac{{\varphi ( \xi  )}}{{{{( {z - \xi } )}^{1 - p }}}}{\rm{d}}\xi} ,\;\;z > a,\\
_zI_b^p \varphi ( z ) = \frac{1}{{\Gamma ( p )}}\int_z^b {\frac{{\varphi ( \xi  )}}{{{{( {\xi  - z} )}^{1 - p }}}}{\rm{d}}\xi} ,\;\;z < b,
\end{aligned}
\end{align}
where $\Gamma( \, \cdot \, ) $ is the usual Gamma function.
 
For real number $ p \geqslant 0$, $ n - 1 \leqslant p < n$ with $ n \in \mathbb{N}$. The left and right Riemann-Liouville derivatives of order $p$ are, respectively, defined as:

left Riemann-Liouville derivative:
\begin{align}
\begin{aligned}
 _a\partial_z^p\varphi( z ) = \frac{1}{{\Gamma ( {n - p} )}}\frac{{{{\rm{d}}^n}}}{{{\rm{d}}{z^n}}}\int_a^z {\frac{{\varphi( \xi  )}}{{{{( {z - \xi } )}^{p - n + 1}}}}{\rm{d}}\xi } ,\;\forall z \in [ {a,b} ],\, 
\end{aligned}
\end{align}

right Riemann-Liouville derivative:
\begin{align}
\begin{aligned}
_z\partial_b^p\varphi( z ) = \frac{{( { - 1} )}^n}{\Gamma ( {n - p} )}\frac{{\rm{d}}^n}{{\rm{d}}{z^n}}\int_z^b {\frac{{\varphi( \xi  )}}{{{{( {\xi  - z} )}^{p - n + 1}}}}{\rm{d}}\xi } ,\;\forall z \in [ {a,b} ],\, 
\end{aligned}
\end{align}
where $ \frac{{\rm{d}}^n}{{\rm{d}}{z^n}} $ stands for the usual derivative of integer order $ n $.

For real number $ p \geqslant 0$, $ n - 1 \leqslant p < n$ with $ n \in \mathbb{N}$. The left and right Caputo derivatives of order $p$ are, respectively, defined as:

left Caputo derivative:
\begin{align}
\begin{aligned}
^C_a\partial_z^p\varphi( z ) = \frac{1}{{\Gamma ( {n - p} )}}\int_a^z \frac{1}{{{{( {z - \xi } )}^{p - n + 1}}}}\frac{{\rm{d}}^n\varphi}{{\rm{d}}{z^n}}{( \xi  )} {\rm{d}}\xi  ,\;\forall z \in [ {a,b} ],\, 
\end{aligned}
\end{align}

right Caputo derivative:
\begin{align}
\begin{aligned}
^C_z\partial_b^p\varphi( z ) = \frac{{{{( { - 1} )}^n}}}{{\Gamma ( {n - p} )}}\int_z^b {\frac{1}{{{{( {\xi  - z} )}^{p - n + 1}}}}\frac{{\rm{d}}^n\varphi}{{\rm{d}}{z^n}}{( \xi  )}{\rm{d}}\xi } ,\;\forall z \in [ {a,b} ]. 
\end{aligned}
\end{align}
These two definitions above are connected with the following relationship, which can be verified by partial integration:
\begin{align}
\label{transRiemCapu}
\begin{aligned}
_a\partial_z^p\varphi ( z ) = {}_a^C\partial_z^p\varphi ( z ) + \sum\limits_{j = 0}^{n - 1} {\frac{1}{{\Gamma ( {1 + j - p} )}}\frac{{\rm{d}}^j\varphi}{{\rm{d}}{z^j}}( a ){{( {z - a} )}^{j - p}}}, \\
_z\partial_b^p\varphi ( z ) = {}_z^C\partial_b^p\varphi ( z ) + \sum\limits_{j = 0}^{n - 1} \frac{{( { - 1} )}^j}{\Gamma ( {1 + j - p} )}\frac{{\rm{d}}^j\varphi}{{\rm{d}}{z^j}}( b ){( {z - b} )}^{j - p}.
\end{aligned}
\end{align}
\end{definition}

In some cases, we hope that the fractional derivative does not seem very mysterious, so we lead in some lemmas related to the Riemann-Liouville fractional derivatives. Thanks to these lemmas, they offer us a pretty convenient way to get corresponding results.

\begin{lemma}[see \cite{IP}]
\label{lemma2-1} Suppose that $ m-1 \leqslant p < m $, $ n-1 \leqslant q < n $, if ${\varphi}^{( j )} ( a ) = 0, \ j = 0,1,\cdots,r-1, r := \max\left\{m,n\right\}, z>a $, we have
\begin{align}
_a\partial{_z^p}{_a}\partial_z^q\varphi ( z ) = {}_a\partial{_z^q}{_a}\partial_z^p\varphi ( z ) = {_a}\partial_z^{p + q}\varphi ( z ).
\end{align}
\end{lemma}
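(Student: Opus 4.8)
The plan is to reduce both sides to compositions of ordinary derivatives with left Riemann--Liouville integrals and then exploit two structural facts: the semigroup law ${}_aI_z^{\mu}{}_aI_z^{\nu}={}_aI_z^{\mu+\nu}$ for $\mu,\nu>0$, and a commutation rule between a fractional integral and an integer-order derivative whose boundary terms are controlled exactly by the vanishing conditions $\varphi^{(j)}(a)=0$. Let $m,n$ be the integers of the statement, so $r=\max\{m,n\}$. I start from the definitions, which need no hypotheses,
$$
{}_a\partial_z^q\varphi=\frac{{\rm d}^n}{{\rm d}z^n}\,{}_aI_z^{n-q}\varphi,\qquad
{}_a\partial_z^p\psi=\frac{{\rm d}^m}{{\rm d}z^m}\,{}_aI_z^{m-p}\psi,
$$
and substitute $\psi={}_a\partial_z^q\varphi$ to obtain
$$
{}_a\partial_z^p\,{}_a\partial_z^q\varphi
=\frac{{\rm d}^m}{{\rm d}z^m}\,{}_aI_z^{m-p}\,\frac{{\rm d}^n}{{\rm d}z^n}\,{}_aI_z^{n-q}\varphi .
$$
The whole game is then to slide ${}_aI_z^{m-p}$ through the derivative $\tfrac{{\rm d}^n}{{\rm d}z^n}$.

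The key step is the identity, obtained by repeated integration by parts,
$$
{}_aI_z^{\sigma}\frac{{\rm d}^n}{{\rm d}z^n}\chi
=\frac{{\rm d}^n}{{\rm d}z^n}\,{}_aI_z^{\sigma}\chi
-\sum_{k=0}^{n-1}\frac{(z-a)^{\sigma-n+k}}{\Gamma(\sigma-n+k+1)}\,\chi^{(k)}(a),
$$
applied with $\sigma=m-p$ and $\chi={}_aI_z^{n-q}\varphi$. I then must check that every boundary coefficient $\chi^{(k)}(a)$, $0\le k\le n-1$, vanishes. For $k=0$ this is immediate, since a fractional integral of a bounded function tends to $0$ at the lower limit; for $k\ge1$ one differentiates and reduces $\chi^{(k)}$ to a fractional \emph{derivative} ${}_a\partial_z^{\,q-n+k}\varphi$ of order $q-n+k\in[0,q)$, which I rewrite through the Riemann--Liouville/Caputo relation \eqref{transRiemCapu} as a fractional integral of $\varphi^{(\lceil q-n+k\rceil)}$ plus correction terms. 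Since the orders involved never exceed $q$, the ceiling never exceeds $n\le r$, so the hypothesis $\varphi^{(j)}(a)=0$ for $j\le r-1$ annihilates both the correction terms and the integral as $z\to a$. This is precisely where the assumption enters, and it is the crux of the argument: the bookkeeping must confirm that no intermediate object ever demands more than $r$ vanishing derivatives, so that the comparatively weak hypothesis (up to order $\max\{m,n\}-1$ rather than $m+n-1$) suffices.

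With all boundary terms gone, the commutation rule and the semigroup law give
$$
{}_aI_z^{m-p}\frac{{\rm d}^n}{{\rm d}z^n}\,{}_aI_z^{n-q}\varphi
=\frac{{\rm d}^n}{{\rm d}z^n}\,{}_aI_z^{m-p}\,{}_aI_z^{n-q}\varphi
=\frac{{\rm d}^n}{{\rm d}z^n}\,{}_aI_z^{(m+n)-(p+q)}\varphi,
$$
whence
$$
{}_a\partial_z^p\,{}_a\partial_z^q\varphi
=\frac{{\rm d}^{m+n}}{{\rm d}z^{m+n}}\,{}_aI_z^{(m+n)-(p+q)}\varphi .
$$
It remains to identify the right-hand side with ${}_a\partial_z^{p+q}\varphi$. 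Because $\tfrac{{\rm d}}{{\rm d}z}{}_aI_z^{\theta}={}_aI_z^{\theta-1}$ whenever $\theta>1$, with no boundary term, the representation $\tfrac{{\rm d}^{\ell}}{{\rm d}z^{\ell}}{}_aI_z^{\ell-(p+q)}\varphi$ is independent of the integer $\ell$ as long as $\ell$ is large enough; taking $\ell=m+n$ yields exactly ${}_a\partial_z^{p+q}\varphi$. Finally, the chain of identities is symmetric under exchanging $(p,m)\leftrightarrow(q,n)$, which delivers the middle equality ${}_a\partial_z^q\,{}_a\partial_z^p\varphi={}_a\partial_z^{p+q}\varphi$ and completes the proof. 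I expect the boundary-term analysis in the second paragraph to be the main obstacle, the remaining manipulations being routine.
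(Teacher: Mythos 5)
Your proof is correct, but there is nothing in the paper to compare it against: Lemma \ref{lemma2-1} is quoted from Podlubny \cite{IP} and the paper gives no proof of it, so the relevant comparison is with the classical argument in that reference. Your route is essentially a self-contained reconstruction of it, organized differently. The classical proof first establishes the \emph{unconditional} composition formula
\begin{equation*}
{}_a\partial_z^p\,{}_a\partial_z^q\varphi(z)={}_a\partial_z^{p+q}\varphi(z)-\sum_{j=1}^{n}\bigl[{}_a\partial_z^{q-j}\varphi\bigr]_{z=a}\,\frac{(z-a)^{-p-j}}{\Gamma(1-p-j)},
\end{equation*}
together with its mirror image under $(p,m)\leftrightarrow(q,n)$, and then shows that for sufficiently smooth $\varphi$ the residual terms vanish precisely when $\varphi^{(j)}(a)=0$ for $j=0,\dots,n-1$ (resp. $m-1$), which yields the hypothesis with $r=\max\{m,n\}$. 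You instead slide ${}_aI_z^{m-p}$ through $\tfrac{{\rm d}^n}{{\rm d}z^n}$ and kill the boundary terms directly; the two bookkeeping exercises are the same, since your coefficients $\chi^{(k)}(a)$ with $\chi={}_aI_z^{n-q}\varphi$ are exactly the values $\bigl[{}_a\partial_z^{q-n+k}\varphi\bigr]_{z=a}$ appearing in the classical residual sum. Your treatment of these terms is sound: via \eqref{transRiemCapu} each one splits into correction terms annihilated by $\varphi^{(j)}(a)=0$, $j\leqslant k-1\leqslant n-2$, plus a Caputo part ${}_aI_z^{k-\theta}\varphi^{(k)}$ that tends to $0$ at $z=a$ by positivity of the integration order, so the stated hypothesis is indeed more than enough. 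The final identification $\tfrac{{\rm d}^{m+n}}{{\rm d}z^{m+n}}\,{}_aI_z^{(m+n)-(p+q)}\varphi={}_a\partial_z^{p+q}\varphi$ via $\tfrac{{\rm d}}{{\rm d}z}\,{}_aI_z^{\theta}={}_aI_z^{\theta-1}$ for $\theta>1$ is also needed and correctly handles the case $\lceil p+q\rceil=m+n-1$, a detail that is easy to overlook. The only caveat is that both your argument and the cited one tacitly assume $\varphi$ is regular enough near $a$ (existence and boundedness of $\varphi^{(k)}$ for $k\leqslant r-1$, and existence of the composed derivatives); this imprecision is inherited from the statement itself and is not a gap specific to your proof.
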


\begin{lemma}[see \cite{IP}]
\label{lemma2-2} For $p \in [{k-1,k} ), k \in \mathbb{N} $, we have
\begin{align}
\begin{aligned}
( {_a\partial_z^pu,v} ) & = ( {u,{}_z^C\partial_b^pv} ) + \sum\limits_{j = 0}^{k - 1} {\left. {{{( { - 1} )}^j}\frac{{\rm{d}}^{j}v}{{\rm{d}}{z^{j}}}( z )\frac{{\rm{d}}^{k-j-1}}{{\rm{d}}{z^{k-j-1}}}{{}_a}I_z^{k - p}u( z )} \right|_{z = a}^{z = b}}, \\
( {_z\partial_b^pu,v} ) & = ( {u,{}_a^C\partial_z^pv} ) + \sum\limits_{j = 0}^{k - 1} {\left. {{{( { - 1} )}^{k - j}}\frac{{\rm{d}}^{j}v}{{\rm{d}}{z^{j}}}( z )\frac{{\rm{d}}^{k-j-1}}{{\rm{d}}{z^{k-j-1}}}{{}_z}I_b^{k - p}u( z )} \right|_{z = a}^{z = b}}.
\end{aligned}
\end{align}
\end{lemma}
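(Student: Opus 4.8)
The plan is to reduce the identity to a classical repeated integration by parts together with the self-adjointness (up to reflection) of the Riemann--Liouville fractional integral operators. Throughout write $D := \mathrm{d}/\mathrm{d}z$ and assume $u,v$ are smooth enough on $[a,b]$ that all integrals below converge and Fubini's theorem applies, which is the regularity under which the formula is intended. First I would rewrite the left Riemann--Liouville derivative in the factored form
$$_a\partial_z^p u = D^k\big({}_a I_z^{k-p} u\big),$$
immediate from Definition \ref{def2-1} upon noting that $\frac{1}{\Gamma(k-p)}\int_a^z (z-\xi)^{k-p-1} u(\xi)\,\mathrm{d}\xi = {}_a I_z^{k-p} u(z)$. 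Setting $w := {}_a I_z^{k-p} u$, the left-hand pairing becomes $\int_a^b (D^k w)\,v\,\mathrm{d}z$.

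Next I would apply the standard repeated integration by parts formula
$$\int_a^b (D^k w)\,v\,\mathrm{d}z = \sum_{j=0}^{k-1} (-1)^j \big[(D^{k-1-j} w)(D^j v)\big]_a^b + (-1)^k \int_a^b w\,(D^k v)\,\mathrm{d}z.$$
Since $D^{k-1-j} w = \frac{\mathrm{d}^{k-j-1}}{\mathrm{d}z^{k-j-1}}\,{}_a I_z^{k-p} u$, the finite sum is exactly the boundary term displayed in the lemma, so it remains only to identify the residual integral $(-1)^k \int_a^b ({}_a I_z^{k-p} u)(D^k v)\,\mathrm{d}z$ with $(u,{}_z^C\partial_b^p v)$.

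For that I would invoke the adjoint (fractional integration by parts) identity
$$\int_a^b ({}_a I_z^\sigma f)\,g\,\mathrm{d}z = \int_a^b f\,({}_z I_b^\sigma g)\,\mathrm{d}z, \qquad \sigma>0,$$
which follows by writing out both fractional integrals and exchanging the order of integration over the triangle $\{a \le \xi \le z \le b\}$ by Fubini. Applying it with $\sigma = k-p$, $f=u$, $g=D^k v$ turns the residual integral into $(-1)^k \int_a^b u\,({}_z I_b^{k-p} D^k v)\,\mathrm{d}z$; recognizing $(-1)^k\,{}_z I_b^{k-p}(D^k v) = {}_z^C\partial_b^p v$ directly from the Caputo definition completes the first identity. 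The second identity follows by the mirror-image argument, starting from $_z\partial_b^p u = (-1)^k D^k({}_z I_b^{k-p} u)$ and using the companion adjoint relation $\int_a^b ({}_z I_b^\sigma f)\,g\,\mathrm{d}z = \int_a^b f\,({}_a I_z^\sigma g)\,\mathrm{d}z$; the extra $(-1)^k$ then produces precisely the $(-1)^{k-j}$ weights in its boundary sum.

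I expect the main obstacle to be bookkeeping rather than depth: getting the signs and the index $k-1-j$ in the boundary sum to line up exactly, and ensuring the Fubini exchange is legitimate, i.e.\ that $u$ and $D^k v$ are integrable enough against the weakly singular kernels for the order of integration to be swapped. The genuine mathematical content is concentrated in that single adjoint-of-fractional-integral step; once it is in hand, the remainder is classical calculus.
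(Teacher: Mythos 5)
The paper itself gives no proof of this lemma: it is quoted verbatim from Podlubny's book \cite{IP}, so there is no internal argument to compare against. Your reconstruction is correct and is the standard proof of this classical identity: the factorization $_a\partial_z^p u = D^k({}_a I_z^{k-p}u)$, repeated classical integration by parts to produce the boundary sum with weights $(-1)^j$, the Fubini exchange over the triangle $\{a\le\xi\le z\le b\}$ to pass the fractional integral onto the test function, and the identification $(-1)^k\,{}_z I_b^{k-p}(D^k v)={}_z^C\partial_b^p v$ all check out, as does the sign bookkeeping $(-1)^k(-1)^j=(-1)^{k-j}$ in the mirrored second identity.
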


\begin{lemma} %(see \cite{LX})
\label{lemma2-3} For real $p \in ({0,2} ), p \neq 1, u \in {_0}{H^\frac{p}{2}}( {\Omega } ),\;v \in {^0}{H^{\frac{p}{2}}}( { {\Omega } } )$, we have
\begin{align}
{\left\langle {_a\partial_z^pu,v} \right\rangle _\Omega } = {( {_a\partial_z^{\frac{p}{2}}u,{{}_z}\partial_b^{\frac{p}{2}}v} )_\Omega }.
\end{align}
\end{lemma}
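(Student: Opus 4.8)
The plan is to prove the identity first on the dense classes $u\in{}_0C^\infty(\Omega)$ and $v\in{}^0C^\infty(\Omega)$, where every pairing is a bona fide $L^2$ inner product, and then to extend it to all of ${}_0H^{p/2}(\Omega)$ and ${}^0H^{p/2}(\Omega)$ by density and continuity. Since these one-sided spaces are by construction the closures of ${}_0C^\infty(\Omega)$ and ${}^0C^\infty(\Omega)$ in the $H^{p/2}$-norm, this reduction is legitimate. Throughout, note that $p\in(0,2)$ forces $p/2\in(0,1)$, so every half-order operator sits in the regime $k=n=1$ of Lemmas~\ref{lemma2-1}--\ref{lemma2-2} and of the Caputo--Riemann--Liouville relation \eqref{transRiemCapu}, which is what keeps all the boundary sums short.

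For the core computation, I would first split the full-order derivative by the semigroup property. Because $u\in{}_0C^\infty(\Omega)$ has compact support in $(a,b]$ we have $u(a)=0$, which is exactly the vanishing hypothesis ($r=1$) needed in Lemma~\ref{lemma2-1} with both orders equal to $p/2$; hence ${}_a\partial_z^{p}u={}_a\partial_z^{p/2}\bigl({}_a\partial_z^{p/2}u\bigr)$. Writing $w:={}_a\partial_z^{p/2}u$ and transferring one half-derivative onto $v$ via Lemma~\ref{lemma2-2} with order $p/2$ (so $k=1$) gives
\begin{align*}
\bigl({}_a\partial_z^{p/2} w,\, v\bigr)_\Omega
= \bigl(w,\, {}_z^C\partial_b^{p/2} v\bigr)_\Omega
+ \Bigl[\, v(z)\,{}_aI_z^{1-p/2} w(z)\,\Bigr]_{z=a}^{z=b}.
\end{align*}
The boundary contribution vanishes: at $z=b$ since $v$ has compact support in $[a,b)$ so $v(b)=0$, and at $z=a$ since ${}_aI_z^{1-p/2}w$ evaluated at $z=a$ is an integral over a degenerate interval. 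Finally I would convert the right Caputo derivative back to the right Riemann--Liouville derivative using \eqref{transRiemCapu} with $n=1$: the only correction term is proportional to $v(b)(z-b)^{-p/2}$, which again vanishes because $v(b)=0$, so ${}_z^C\partial_b^{p/2}v={}_z\partial_b^{p/2}v$. Chaining the three equalities yields $\langle{}_a\partial_z^{p}u,v\rangle_\Omega=({}_a\partial_z^{p/2}u,\,{}_z\partial_b^{p/2}v)_\Omega$ for smooth $u,v$.

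To pass to the limit, I would use that both sides are bounded bilinear forms in the $H^{p/2}$-norms. The right-hand side is controlled by Cauchy--Schwarz together with the norm equivalences $\|{}_a\partial_z^{p/2}u\|_{L^2(\Omega)}\cong\|u\|_{H^{p/2}(\Omega)}$ and $\|{}_z\partial_b^{p/2}v\|_{L^2(\Omega)}\cong\|v\|_{H^{p/2}(\Omega)}$, which follow from the Fourier characterization of $H^{p/2}$ and the fact that the half-order operators act, after extension to $\mathbb{R}$, as $|\omega|^{p/2}$-type Fourier multipliers. The left-hand side is then read as the induced duality pairing between the half-order spaces. Approximating $u,v$ by sequences in ${}_0C^\infty(\Omega)$ and ${}^0C^\infty(\Omega)$ and invoking these continuity bounds transports the identity to arbitrary admissible $u,v$.

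I expect the principal obstacle to be the rigorous justification of the vanishing boundary terms and of the operator-norm equivalences that power the density step. Concretely, one must check that $w={}_a\partial_z^{p/2}u$ is regular enough near the endpoints for ${}_aI_z^{1-p/2}w$ to be well defined and to vanish at $z=a$, and that the half-order seminorm is genuinely equivalent to the $H^{p/2}$-norm on these one-sided spaces. This last point is precisely where the hypothesis $p\neq1$ enters: at $p=1$ the half-order exponent $p/2=\tfrac12$ is the critical Sobolev trace exponent, where the distinction between ${}_0H^{1/2}$, ${}^0H^{1/2}$, and $H^{1/2}$ collapses and the equivalence requires separate treatment, so excluding $p=1$ keeps the argument uniform.
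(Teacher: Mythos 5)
Your proposal follows essentially the same route as the paper's own proof: set $w={}_a\partial_z^{p/2}u$, split the full-order derivative by the semigroup property of Lemma \ref{lemma2-1}, transfer one half-order derivative onto $v$ via Lemma \ref{lemma2-2}, convert the resulting right Caputo derivative back to Riemann--Liouville through \eqref{transRiemCapu}, and annihilate the boundary contributions using the vanishing of $v$ at $b$ and the degeneracy of ${}_aI_z^{1-p/2}w$ at $z=a$. The only difference is that you spell out the smooth-dense-class-plus-continuity step (with the half-derivative norm equivalences, where $p\neq 1$ indeed enters), which the paper carries out directly on ${}_0H^{p/2}(\Omega)\times{}^0H^{p/2}(\Omega)$ without comment; this is added rigor, not a different method.
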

\begin{proof}
Suppose $w = {_a}\partial_x^{\frac{p}{2}}u$, we can get $_a\partial_z^pu = {{}_a}\partial_x^{\frac{p}{2}}w$ by Lemma \ref{lemma2-1}. Thanks to the Lemma \ref{lemma2-2} and \eqref{transRiemCapu}, it shows that
\begin{align}
\begin{aligned}
\left\langle {_a\partial_z^pu,v} \right\rangle_\Omega & = \left\langle {_a\partial_z^{\frac{p}{2}}w,v} \right\rangle_\Omega = ( {w,{}_z^C\partial_b^{\frac{p}{2}}v} )_\Omega + \left. {v( z ) {_a}I_z^{1 - \frac{p}{2}}w( z )} \right|_{z = a}^{z = b}\\
& = ( {w,{_z}\partial_b^{\frac{p}{2}}v} )_\Omega - {\left. {v( a ) {{}_a}I^{1 - \frac{p}{2}}_zw( z )} \right|_{z = a}}.
\end{aligned}
\end{align}
By Definition \ref{def2-1}, we can easily get ${\left. {{_a}I_z^{1 - \frac{p}{2}}w( z )} \right|_{z = a}} = 0$, which ends the proof.
\end{proof}

For the sake of convenience, we propose the following definitions.
\begin{definition}
\label{defllff}
Let $p,q >0$, we define the seminorm:
\[{\left| \varphi  \right|_{H_l^p( {I;H_l^q ( \Lambda  )} )}}: = {\left\| {_0\partial _t^p{_{ - 1}}\partial _x^q \varphi } \right\|_{{L^2}( {I;{L^2}( \Lambda  )} )}},\]
and norm
\[{\left\| \varphi  \right\|_{H_l^p( {I;H_l^q ( \Lambda  )} )}}: = {( {\left\| \varphi  \right\|_{{L^2}( {I;{L^2}( \Lambda  )} )}^2 + \left| \varphi  \right|_{H_l^p( {I;H_l^q ( \Lambda  )} )}^2} )^{\frac{1}{2}}}.\]
Then we define $H_l^p( {I;H_l^q ( \Lambda  )} )$ as the closure of $C_0^\infty ( {I;C_0^\infty ( \Lambda  )} )$ with respect to norm ${\left\| {\; \cdot \;} \right\|_{H_l^p( {I;H_l^q ( \Lambda  )} )}}$. Similarly, we can also define spaces $ H_l^p( {I;H_r^q ( \Lambda  )} ),H_r^p( {I;H_l^q ( \Lambda  )} ),H_r^p( {I;H_r^q ( \Lambda  )} ) $ with their seminorms and norms.
\end{definition}

\begin{definition}
\label{deflcff}
Let $p,q >0, q \neq n + \frac{1}{2}$, we define the seminorm:
\[{\left| \varphi  \right|_{H_l^p( {I;H_c^q ( \Lambda  )} )}}: = {\left| {{{( {_0\partial _t^p{_{ - 1}}\partial _x^q \varphi ,{_0}\partial _t^p{_x}\partial _1^q \varphi } )}_{{L^2}( Q_T )}}} \right|^{\frac{1}{2}}},\]
and norm
\[{\left\| \varphi  \right\|_{H_l^p( {I;H_c^q ( \Lambda  )} )}}: = {( {\left\| \varphi  \right\|_{{L^2}( {I;{L^2}( \Lambda  )} )}^2 + \left| \varphi  \right|_{H_l^p( {I;H_c^q( \Lambda  )} )}^2} )^{\frac{1}{2}}}.\]
Then we define $H_l^p( {I;H_c^q ( \Lambda  )} )$ as the closure of $C_0^\infty ( {I;C_0^\infty ( \Lambda  )} )$ with respect to norm ${\left\| {\; \cdot \;} \right\|_{H_l^p( {I;H_c^q ( \Lambda  )} )}}$. Similarly, we can also define spaces $ H_r^p( {I;H_c^q ( \Lambda  )} ), H_c^p( {I;H_l^q ( \Lambda  )} ), H_c^p( {I;H_r^q( \Lambda  )} )$ with their seminorms and norms.
\end{definition}

\begin{definition}
\label{defccff}
Let $p,q>0, p,q \ne n + \frac{1}{2}$, we define the seminorm:
\[{\left| \varphi  \right|_{H_c^p( {I;H_c^q ( \Lambda  )} )}}: = {\left| {{{( {_0\partial _t^p{_{ - 1}}\partial _x^q \varphi ,{_t}\partial _T^p{_x}\partial _1^q \varphi } )}_{{L^2}( Q_T )}}} \right|^{\frac{1}{2}}},\]
and norm
\[{\left\| \varphi  \right\|_{H_c^p( {I;H_c^q ( \Lambda  )} )}}: = {( {\left\| \varphi  \right\|_{{L^2}( {I;{L^2}( \Lambda  )} )}^2 + \left| \varphi  \right|_{H_c^p( {I;H_c^q( \Lambda  )} )}^2} )^{\frac{1}{2}}}.\]
Then we define $H_c^p( {I;H_c^q ( \Lambda  )} )$ as the closure of $C_0^\infty ( {I;C_0^\infty ( \Lambda  )} )$ with respect to norm ${\left\| {\; \cdot \;} \right\|_{H_c^p( {I;H_c^q ( \Lambda  )} )}}$. 
\end{definition}

%%%%%%%%%%%%%%%%%3.%Weak solution and its existence and uniqueness%%%%%%%%%%
\subsection{Existence and Uniqueness of Weak Solutions}
In this subsection we shall discuss the weak formulation of \eqref{1-1}. We will establish the formulation based on Lemmas \ref{lemma2-2}-\ref{lemma2-3}.

\begin{definition}
\label{def3-1} A function $ u\in {_0B^{\frac{\alpha }{2},\frac{\beta }{2},\frac{\gamma }{2},\frac{\mu }{2}}}( Q_T )$ is said to be a weak solution of equation \eqref{1-1}, if for any $ v\in {^0B^{\frac{\alpha }{2},\frac{\beta }{2},\frac{\gamma }{2},\frac{\mu }{2}}}( Q_T ) $, the following equation
\begin{equation}
\label{3-1}  \mathscr{A}_\varepsilon ^{(\alpha ,\beta ,\gamma ,\mu ) }( {u,v} )=F( v ),\ \forall v\in {^0B^{ \frac{\alpha }{2},\frac{\beta }{2},\frac{\gamma }{2},\frac{\mu }{2}}}( Q_T )
\end{equation}
is valid where $\mathscr{A}_\varepsilon ^{(\alpha ,\beta ,\gamma ,\mu ) }( u,v ) $ is a bilinear form on ${_0B^{\frac{\alpha }{2},\frac{\beta }{2},\frac{\gamma }{2},\frac{\mu }{2}}}( Q_T )\times {{}^0B^{\frac{\alpha }{2},\frac{\beta }{2},\frac{\gamma }{2},\frac{\mu }{2}}}( Q_T ) $, satisfies
\[\mathscr{A}_\varepsilon ^{(\alpha ,\beta ,\gamma ,\mu ) }( {u,v} ) = {( {_0\partial _t^{\frac{\alpha }{2}}u,{_t}\partial _T^{\frac{\alpha }{2}}v} )_{Q_T}} - {( {_{ - 1}\partial _x^{\frac{\beta }{2}}u,{_x}\partial _1^{\frac{\beta }{2}}v} )_{Q_T}} - \varepsilon  \cdot {( {_0\partial _t^{\frac{\gamma }{2}}{_{ - 1}}\partial _x^{\frac{\mu }{2}}u,{_t}\partial _T^{\frac{\gamma }{2}}{_x}\partial _1^{\frac{\mu }{2}}v} )_{Q_T}} + {( {u,v} ) }_{Q_T},\]
and the functional $ F( \, \cdot \,  )$ is given by 
\[F( v ):= {\left\langle {f,v} \right\rangle _{Q_T}}.\]
\end{definition}

Hereafter, we denote
\[{s} = \max \left\{ {\frac{\alpha}{2} ,\frac{\gamma}{2} } \right\},\;\;\;\;{\sigma} = \max \left\{ {\frac{\beta}{2} ,\frac{\mu}{2} } \right\}.\]
Then we give the following theorem.
\begin{theorem}
\label{thm3-1} For all $ \alpha ,\gamma  \in ( {0,1} ),\;\beta ,\mu  \in ( {1,2} ),\;\varepsilon  \in [ {0,\min\{2^{\gamma - \alpha},1 \} } ] $, $ f \in {( {{^0B^{\frac{\alpha }{2},\frac{\beta }{2},\frac{\gamma }{2},\frac{\mu }{2}}}( Q_T )} )^\prime } $, 
%and $u$ satisfies
%\begin{align}
%\label{weakformulationcondition}
%\int_{-1}^1 u(x,t) \omega^{(-\sigma,0)} {\rm d}x = 0, \ \ \ \ \ \ \ \ \ \ \int_0^T u(x,t) \bar{\omega}^{(-s,0)} {\rm d}t = 0, 
%\end{align}
then the problem (2.14) is well-posed. Furthermore, if $u$ is the solution of problem (2.14), then it holds
\begin{equation}
\label{3-2}
\left\| u \right\|_{{B^{\frac{\alpha }{2},\frac{\beta }{2},\frac{\gamma }{2},\frac{\mu }{2}}}( Q_T )} \lesssim {\left\| f \right\|_{{{\left( {{B^{\frac{\alpha }{2},\frac{\beta }{2},\frac{\gamma }{2},\frac{\mu }{2}}}( Q_T )} \right)}^\prime }}}.
\end{equation}
\end{theorem}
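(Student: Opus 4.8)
The plan is to recast the weak problem \eqref{3-1} as an abstract variational equation and to apply the Lax--Milgram theorem. The first step I would take is to observe that, for the orders at hand, the trial space $_0B^{\frac{\alpha}{2},\frac{\beta}{2},\frac{\gamma}{2},\frac{\mu}{2}}(Q_T)$ and the test space $^0B^{\frac{\alpha}{2},\frac{\beta}{2},\frac{\gamma}{2},\frac{\mu}{2}}(Q_T)$ coincide with equivalent norms: the only factor distinguishing them is $_0H^{\frac{\alpha}{2}}(I;L^2(\Lambda))$ against $^0H^{\frac{\alpha}{2}}(I;L^2(\Lambda))$, and since $\frac{\alpha}{2}<\frac12$ no trace is seen at the endpoints, so both reduce to $H^{\frac{\alpha}{2}}(I;L^2(\Lambda))$ (the factor $H_0^\sigma(\Lambda)$ and the mixed factor $H^{\gamma/2}(I;H^{\mu/2}(\Lambda))$ are already common). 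Writing $B$ for this common Hilbert space, it then suffices to show that $\mathscr{A}_\varepsilon^{(\alpha,\beta,\gamma,\mu)}$ is continuous and coercive on $B$ and that $F$ is bounded, the trial and test spaces coinciding here so that the Babu\v{s}ka inf--sup framework is not needed; the left/right asymmetry that underlies the Petrov--Galerkin structure is retained inside the form.

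Continuity I would check termwise. Each summand of $\mathscr{A}_\varepsilon^{(\alpha,\beta,\gamma,\mu)}$ is already in split half-order form, so a plain Cauchy--Schwarz bound in $L^2(Q_T)$ controls the time term by the $H^{\alpha/2}(I;L^2)$ seminorms of $u$ and $v$, the space term by the $L^2(I;H_0^{\beta/2})$ seminorms, the reaction term by the $L^2(Q_T)$ norms, and the viscosity term by the mixed seminorms $|u|_{H^{\gamma/2}(I;H^{\mu/2})}|v|_{H^{\gamma/2}(I;H^{\mu/2})}$; summing and using $\varepsilon\le1$ yields $|\mathscr{A}_\varepsilon^{(\alpha,\beta,\gamma,\mu)}(u,v)|\lesssim\|u\|_B\|v\|_B$. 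Boundedness of $F$ is immediate, $|F(v)|\le\|f\|_{B'}\|v\|_B$.

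For coercivity I would test with $v=u$. Using Lemma \ref{lemma2-1} to compose same-sided derivatives and the positivity identity $(_a\partial_z^{s}w,{}_z\partial_b^{s}w)_\Omega=\cos(\pi s)\|{}_a\partial_z^{s}w\|_{L^2(\Omega)}^2$ for $s\in(0,1)$ (which follows from Lemma \ref{lemma2-3} together with the Fourier representation of the derivatives), the leading terms become sign-definite: the time term carries $\cos\frac{\pi\alpha}{2}>0$ since $\frac{\alpha}{2}\in(0,\frac12)$; the space term carries $-\cos\frac{\pi\beta}{2}>0$ since $\frac{\beta}{2}\in(\frac12,1)$, the minus sign in $\mathscr{A}_\varepsilon^{(\alpha,\beta,\gamma,\mu)}$ restoring positivity; and the reaction term contributes $\|u\|_{L^2(Q_T)}^2$. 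These three already dominate the $H^{\alpha/2}(I;L^2)$, $L^2(I;H_0^{\beta/2})$ and $L^2(Q_T)$ pieces of $\|u\|_B^2$. The viscosity term, however, is genuinely indefinite: its Fourier kernel is $|\tau|^{\gamma}|\xi|^{\mu}\cos\!\big(\tfrac{\pi(\gamma\pm\mu)}{2}\big)$, whose sign changes between the quadrants $\tau\xi>0$ and $\tau\xi<0$, so it cannot be absorbed for free.

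The main obstacle is therefore to dominate the remaining mixed seminorm $|u|_{H^{\gamma/2}(I;H^{\mu/2})}$ and to control the indefinite viscosity contribution uniformly over the whole admissible range $\varepsilon\in[0,\min\{2^{\gamma-\alpha},1\}]$, including the degenerate endpoint $\varepsilon=0$ where the term vanishes. My plan is to bound the viscosity term by Cauchy--Schwarz, $|\varepsilon\,(_0\partial_t^{\gamma/2}{}_{-1}\partial_x^{\mu/2}u,\,{}_t\partial_T^{\gamma/2}{}_x\partial_1^{\mu/2}u)|\le\varepsilon\,\|{}_0\partial_t^{\gamma/2}{}_{-1}\partial_x^{\mu/2}u\|_{L^2(Q_T)}^2$, and then to absorb this into the positive time and space terms through a fractional Young/interpolation inequality of the type $|\tau|^{\gamma}|\xi|^{\mu}\lesssim|\tau|^{\alpha}+|\xi|^{\beta}+1$; the sharp balancing of $\varepsilon$ against the $\cos$-constants in this absorption is exactly what produces the threshold $2^{\gamma-\alpha}$, and pinning it down is the delicate quantitative part. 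Once $\mathscr{A}_\varepsilon^{(\alpha,\beta,\gamma,\mu)}(u,u)\gtrsim\|u\|_B^2$ is established, Lax--Milgram gives existence and uniqueness, and testing $\mathscr{A}_\varepsilon^{(\alpha,\beta,\gamma,\mu)}(u,u)=F(u)$ with the bound on $F$ yields the stability estimate \eqref{3-2}.
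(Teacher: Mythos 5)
Your route---identifying the trial and test spaces and then applying the classical (coercive) Lax--Milgram theorem with $v=u$---contains a fatal gap, and it is not the ``delicate quantitative part'' you defer but the structure of the argument itself. The norm of $B^{\frac{\alpha}{2},\frac{\beta}{2},\frac{\gamma}{2},\frac{\mu}{2}}(Q_T)$ contains the mixed component $\|u\|_{H^{\gamma/2}(I;H^{\mu/2}(\Lambda))}$, while the only term of $\mathscr{A}_\varepsilon^{(\alpha,\beta,\gamma,\mu)}$ that sees mixed derivatives is the viscosity term, which carries the factor $\varepsilon$ and is allowed to vanish ($\varepsilon=0$ is admissible in Theorem \ref{thm3-1}). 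At $\varepsilon=0$ your coercivity claim $\mathscr{A}_0(u,u)\gtrsim\|u\|_B^2$ would require, in Fourier variables, $(1+|\tau|^2)^{\gamma/2}(1+|\xi|^2)^{\mu/2}\lesssim(1+|\tau|^2)^{\alpha/2}+(1+|\xi|^2)^{\beta/2}+1$, which fails on the diagonal $|\tau|=|\xi|=R\to\infty$ whenever $\gamma+\mu>\beta$ (a typical case, e.g.\ $\gamma=0.5$, $\mu=1.5$, $\beta=1.2$); by interpolation/Young the needed condition would be $\gamma/\alpha+\mu/\beta\leqslant 1$, which is never assumed and is impossible whenever $\mu>\beta$. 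The same counterexample shows that your proposed absorption inequality $|\tau|^{\gamma}|\xi|^{\mu}\lesssim|\tau|^{\alpha}+|\xi|^{\beta}+1$ is simply false, so no balancing of cosine constants can produce the threshold $2^{\gamma-\alpha}$ from it. For $\varepsilon>0$ the situation is worse, not better: by your own (correct) computation the viscosity kernel changes sign between the quadrants $\tau\xi>0$ and $\tau\xi<0$, so by concentrating $\hat u$ at high frequency in the quadrants where the kernel has the unfavorable sign one makes the viscosity contribution $\approx-\varepsilon|u|^2_{H^{\gamma/2}(I;H^{\mu/2})}$, which dominates the positive time, space and $L^2$ terms; hence $\mathscr{A}_\varepsilon(u,u)$ is not even bounded below by a multiple of the weaker norm of $H^{\alpha/2}(I;L^2)\cap L^2(I;H_0^{\beta/2})$. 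Your correct observations (coincidence of ${}_0H^{\alpha/2}$, ${}^0H^{\alpha/2}$ and $H^{\alpha/2}$ for $\alpha<1$; positivity of $\cos\frac{\pi\alpha}{2}$ and of $-\cos\frac{\pi\beta}{2}$) only control the first two components of the $B$-norm, and only when $\varepsilon=0$, which proves a different and weaker statement than the theorem.

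The paper avoids this obstruction precisely by never attempting coercivity: it uses the Babu\v{s}ka (Petrov--Galerkin) form of Lax--Milgram, in which continuity is checked by H\"older's inequality and the inf--sup and ``transposed'' inf--sup conditions are established constructively. Expanding $u$ in the GJF basis, the proof in \ref{AppendixA} builds an explicit test function $v^*$ whose Jacobi coefficients are $\bar u_{mn}/\bar C^{(\alpha,\beta,\gamma,\mu)}_{mn}$, so that by orthogonality of the Jacobi polynomials $\mathscr{A}_\varepsilon^{(\alpha,\beta,\gamma,\mu)}(u,v^*)=\sum_{i,j}\bar u_{ij}^2$, and the whole difficulty is concentrated in the positivity of the constants $\bar C^{(\alpha,\beta,\gamma,\mu)}_{ij}$, proved in Proposition \ref{prop1}. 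That positivity is exactly where the restriction $\varepsilon\leqslant\min\{2^{\gamma-\alpha},1\}$ enters, through the Jacobi-weight comparison $\bar\gamma_n^{(b,b)}\leqslant 2^{2(b-a)}\bar\gamma_n^{(a,a)}$ for $0\leqslant a\leqslant b\leqslant 1$ (the second lemma of the appendix, which rests on the monotonicity result of Lemma \ref{increaselemma})---not through constants of fractional integration by parts. If you wish to salvage your plan, you must either weaken the statement (drop the mixed component from the norm, i.e.\ work in $H^{\alpha/2}(I;L^2)\cap L^2(I;H_0^{\beta/2})$ and restrict to $\varepsilon=0$), or abandon coercivity and construct a suitable test function for an inf--sup argument, as the paper does.
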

The details of the proof would be presented in \ref{AppendixA}.

%%%%%%%%%%%%%%%%%%%%%%%%%%%%%%%%%4.Spectral Garlerkin method%%%%%%%%%%%%%%%%%
\section{Spectral GJF-Petrov-Galerkin method}

In this section, we aim to solve the proposed problem, which conveyed by its weak solution \eqref{3-1}, with initial value and Dirichlet boundary conditions via a space-time spectral Petrov-Galerkin method. We bring in some approximation operators and derive the corresponding approximation results, and compute the error estimation of our numerical solution.

\subsection{Main Algorithm}
In this subsection we shall mainly solve the problem in the weak formulation \eqref{3-1} with the space-time spectral Petrov-Galerkin method. In order to solve the weak solution \eqref{3-1} without singularity by fractional derivatives, we here construct spectral basis functions depend on GJFs. At the beginning, denote $\omega^{(\tilde{\alpha},\tilde{\beta})} ( x ) = {(1-x)}^{\tilde{\alpha}}{(1+x)}^{\tilde{\beta}} $ and $\bar{\omega}^{( \tilde{\alpha}, \tilde{\beta})} ( x ) = {t}^{\tilde{\beta}}{(T-t)}^{\tilde{\alpha}} $. We introduce the definitions and properties of GJFs.

\begin{definition}[\cite{CSW}]
Let $P_n^{( {{\tilde \alpha } ,{\tilde \beta } } )}( x )$ be the generalized Jacobi polynomials with parameters ${\tilde \alpha }, {\tilde \beta }$. Define
\begin{align}
^ + J_n^{( { - {\tilde \alpha } ,{\tilde \beta } } )}( x ): = {( {1 - x} )^{\tilde \alpha } }P_n^{( {{\tilde \alpha } ,{\tilde \beta } } )}( x ),\;\;\;\;{\tilde \alpha }  >  - 1,\;{\tilde \beta }  \in \mathbb{R},\\
^ - J_n^{( {{\tilde \alpha } , - {\tilde \beta } } )}( x ): = {( {1 + x} )^{\tilde \beta } }P_n^{( {{\tilde \alpha } ,{\tilde \beta } } )}( x ),\;\;\;\;{\tilde \alpha }  \in \mathbb{R},\;{\tilde \beta }  >  - 1,
\end{align}
as generalized Jacobi functions, where $x \in \Lambda $, and $n$ presents non-negative integers.
\end{definition}
\begin{remark}
A direct calculation leads to classical cases $(\tilde \alpha,\tilde \beta  > -1)$ that
\begin{align}
& ^ + J_n^{( { - \tilde \alpha ,\tilde \beta } )}( { - 1} ) = {2^{\tilde \alpha }}{( { - 1} )^n}\frac{{\Gamma ( {n + \tilde \beta  + 1} )}}{{n!\Gamma ( {\tilde \beta  + 1} )}},\;\;{{}^ + }J_n^{( { - \tilde \alpha ,\tilde \beta } )}( 1 ) = 0,\\
& ^ - J_n^{( {\tilde \alpha , - \tilde \beta } )}( { - 1} ) = 0,\;\;\;\;\;\;\;\;\;\;\;{{}^ - }J_n^{( {\tilde \alpha , - \tilde \beta } )}( 1 ) = {2^{\tilde \beta }}\frac{{\Gamma ( {n + \tilde \alpha  + 1} )}}{{n!\Gamma ( {\tilde \alpha  + 1} )}}.
\end{align}
\end{remark}

For calculating the derivatives of GJFs conveniently, we lead in the following lemma.
\begin{lemma}[\cite{CSW}]
\label{FDJF}
Let $\tilde s \in \mathbb{R}^+, x \in \Lambda $ and $n$ presents non-negative integers. 
\begin{itemize}
\item For ${\tilde \alpha } > \tilde s-1 $ and ${\tilde \beta } \in \mathbb{R} $,
\begin{align}
_x\partial_1^{\tilde s}\left\{ {^ + J_n^{( { - {\tilde \alpha } ,{\tilde \beta } } )}( x )} \right\} = \frac{{\Gamma ( {n + {\tilde \alpha }  + 1} )}}{{\Gamma ( {n + {\tilde \alpha }  - \tilde s + 1} )}}{{}^ + }J_n^{( { - {\tilde \alpha }  + \tilde s,{\tilde \beta }  + \tilde s} )}( x ).
\end{align}
\item For ${\tilde \alpha } \in \mathbb{R} $ and ${\tilde \beta } > \tilde s-1 $,
\begin{align}
_{ - 1}\partial_x^{\tilde s}\left\{ {^ - J_n^{( {{\tilde \alpha } , - {\tilde \beta } } )}( x )} \right\} = \frac{{\Gamma ( {n + {\tilde \beta } + 1} )}}{{\Gamma ( {n + {\tilde \beta }  - \tilde s + 1} )}}{{}^ - }J_n^{( {{\tilde \alpha }  + \tilde s, - {\tilde \beta }  + \tilde s} )}( x ).
\end{align}
\end{itemize}
\end{lemma}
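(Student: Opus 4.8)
The plan is to reduce the entire statement to a single elementary formula for the fractional derivative of a pure power, and then propagate it through the polynomial factor. Recall from the definition that ${}^+J_n^{(-\tilde\alpha,\tilde\beta)}(x)=(1-x)^{\tilde\alpha}P_n^{(\tilde\alpha,\tilde\beta)}(x)$, and expand the Jacobi polynomial in its finite power series
\[
P_n^{(\tilde\alpha,\tilde\beta)}(x)=\frac{\Gamma(n+\tilde\alpha+1)}{n!\,\Gamma(\tilde\alpha+1)}\sum_{k=0}^{n}\frac{(-n)_k\,(n+\tilde\alpha+\tilde\beta+1)_k}{(\tilde\alpha+1)_k\,k!}\Big(\frac{1-x}{2}\Big)^k .
\]
Multiplying by $(1-x)^{\tilde\alpha}$ turns ${}^+J_n^{(-\tilde\alpha,\tilde\beta)}$ into a finite linear combination of the pure powers $(1-x)^{\tilde\alpha+k}$, so once I know how the right derivative ${}_x\partial_1^{\tilde s}$ acts on each such power, the first identity becomes a bookkeeping exercise in Gamma functions.

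First I would establish the base case
\[
{}_x\partial_1^{\tilde s}(1-x)^{c}=\frac{\Gamma(c+1)}{\Gamma(c-\tilde s+1)}(1-x)^{c-\tilde s},\qquad c>\tilde s-1 .
\]
To do this I write ${}_x\partial_1^{\tilde s}=(-1)^m\frac{\mathrm d^m}{\mathrm dx^m}\,{}_xI_1^{\,m-\tilde s}$ with $m=\lceil\tilde s\rceil$, compute ${}_xI_1^{\,p}(1-x)^{c}$ by the substitution $\xi=x+(1-x)u$ which reduces the integral to a Beta integral $B(p,c+1)=\Gamma(p)\Gamma(c+1)/\Gamma(p+c+1)$, giving ${}_xI_1^{\,p}(1-x)^{c}=\frac{\Gamma(c+1)}{\Gamma(c+p+1)}(1-x)^{c+p}$, and finally differentiate $m$ times. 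The two factors of $(-1)^m$ and the intermediate Gamma quotient collapse to the claimed expression. The restriction $\tilde\alpha>\tilde s-1$ is exactly what is needed for the $k=0$ term, since it guarantees $c-\tilde s=\tilde\alpha-\tilde s>-1$ for every exponent $c=\tilde\alpha+k$ that occurs.

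Next I would apply this formula term by term. Using $(1-x)^{\tilde\alpha}\big(\tfrac{1-x}{2}\big)^k=2^{-k}(1-x)^{\tilde\alpha+k}$, differentiation sends the $k$-th term to $\frac{\Gamma(\tilde\alpha+k+1)}{\Gamma(\tilde\alpha+k-\tilde s+1)}(1-x)^{\tilde\alpha-\tilde s}\big(\tfrac{1-x}{2}\big)^k$, the powers of $2$ cancelling cleanly. The main obstacle is then the coefficient matching: I must rewrite $\frac{\Gamma(\tilde\alpha+k+1)}{(\tilde\alpha+1)_k\,\Gamma(\tilde\alpha+k-\tilde s+1)}$ using $(\tilde\alpha+1)_k=\Gamma(\tilde\alpha+k+1)/\Gamma(\tilde\alpha+1)$ together with $\Gamma(\tilde\alpha+k-\tilde s+1)=\Gamma(\tilde\alpha-\tilde s+1)(\tilde\alpha-\tilde s+1)_k$, so that the Pochhammer symbol $(\tilde\alpha+1)_k$ in the denominator is replaced by $(\tilde\alpha-\tilde s+1)_k$ while a factor $\Gamma(\tilde\alpha+1)/\Gamma(\tilde\alpha-\tilde s+1)$ is pulled to the front. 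After this substitution the surviving sum is precisely the series of $P_n^{(\tilde\alpha-\tilde s,\tilde\beta+\tilde s)}(x)$ (note that $(\tilde\alpha-\tilde s)+(\tilde\beta+\tilde s)+1=\tilde\alpha+\tilde\beta+1$ leaves the upper parameter $n+\tilde\alpha+\tilde\beta+1$ unchanged), and collecting the leftover $\Gamma$-prefactors turns $\frac{\Gamma(n+\tilde\alpha+1)}{n!\,\Gamma(\tilde\alpha-\tilde s+1)}$ against $\frac{\Gamma(n+\tilde\alpha-\tilde s+1)}{n!\,\Gamma(\tilde\alpha-\tilde s+1)}$ into exactly the factor $\frac{\Gamma(n+\tilde\alpha+1)}{\Gamma(n+\tilde\alpha-\tilde s+1)}$ multiplying ${}^+J_n^{(-\tilde\alpha+\tilde s,\tilde\beta+\tilde s)}(x)$.

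Finally, the second identity would follow from the reflection $x\mapsto-x$ without a separate computation. Using the parity relation $P_n^{(\tilde\alpha,\tilde\beta)}(-x)=(-1)^nP_n^{(\tilde\beta,\tilde\alpha)}(x)$ one checks that ${}^-J_n^{(\tilde\alpha,-\tilde\beta)}(x)=(-1)^n\,{}^+J_n^{(-\tilde\beta,\tilde\alpha)}(-x)$, while the same reflection carries the left derivative ${}_{-1}\partial_x^{\tilde s}$ into the right derivative ${}_x\partial_1^{\tilde s}$. Applying the already-proved first identity with the roles of $\tilde\alpha$ and $\tilde\beta$ interchanged and then translating back through $x\mapsto-x$, the two factors of $(-1)^n$ cancel and one lands exactly on $\frac{\Gamma(n+\tilde\beta+1)}{\Gamma(n+\tilde\beta-\tilde s+1)}\,{}^-J_n^{(\tilde\alpha+\tilde s,-\tilde\beta+\tilde s)}(x)$. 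I expect the only delicate points to be the careful tracking of the $(-1)^m$ factors in the base case and the Pochhammer manipulation in the coefficient matching; both are routine once the hypergeometric form is in hand.
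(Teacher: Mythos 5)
Your proposal is correct; note, however, that this paper contains no proof of Lemma~\ref{FDJF} to compare against --- the lemma is imported verbatim from \cite{CSW}, so the only meaningful comparison is with that cited source. There the identity is obtained from the corresponding fractional-\emph{integral} formula for Jacobi polynomials, a classical identity of Bateman/Askey--Fitch type, ${}_{-1}I_x^{\tilde s}\{(1+x)^{\tilde\beta}P_n^{(\tilde\alpha,\tilde\beta)}(x)\}=\frac{\Gamma(n+\tilde\beta+1)}{\Gamma(n+\tilde\beta+\tilde s+1)}(1+x)^{\tilde\beta+\tilde s}P_n^{(\tilde\alpha-\tilde s,\tilde\beta+\tilde s)}(x)$, applied to the polynomial as a whole and then differentiated an integer number of times; you instead rebuild the result from first principles, and your steps all close. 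The Beta-integral computation does give ${}_xI_1^{p}(1-x)^{c}=\frac{\Gamma(c+1)}{\Gamma(c+p+1)}(1-x)^{c+p}$ and the two $(-1)^m$ factors cancel in the power rule; the Pochhammer exchange of $(\tilde\alpha+1)_k$ for $(\tilde\alpha-\tilde s+1)_k$ reconstitutes $P_n^{(\tilde\alpha-\tilde s,\tilde\beta+\tilde s)}$ precisely because the parameter $n+\tilde\alpha+\tilde\beta+1$ is invariant under $(\tilde\alpha,\tilde\beta)\mapsto(\tilde\alpha-\tilde s,\tilde\beta+\tilde s)$, with the leftover Gamma factors assembling to $\frac{\Gamma(n+\tilde\alpha+1)}{\Gamma(n+\tilde\alpha-\tilde s+1)}$; and the reflection $x\mapsto -x$ genuinely interchanges ${}_{-1}\partial_x^{\tilde s}$ and ${}_x\partial_1^{\tilde s}$, since the $(-1)^m$ built into the right-derivative definition absorbs the sign coming from $\mathrm{d}/\mathrm{d}x=-\mathrm{d}/\mathrm{d}y$, so the two parity factors $(-1)^n$ cancel as claimed. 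What each route buys: yours is fully self-contained (only the Beta function and the ${}_2F_1$ representation of $P_n^{(\tilde\alpha,\tilde\beta)}$ are needed) at the price of the coefficient bookkeeping, while the cited route is short but rests on a nontrivial classical identity. One small correction to your commentary: the power rule itself needs only $c>-1$, not $c>\tilde s-1$; the lemma's hypothesis $\tilde\alpha>\tilde s-1$ is what keeps $\Gamma(\tilde\alpha-\tilde s+1)$ off its poles in your coefficient matching and makes the output a legitimate GJF with exponent $\tilde\alpha-\tilde s>-1$, not what legitimizes the termwise differentiation.
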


Then, we define the spectral basis functions of the trial functions
\begin{align}
{\varphi _i}( x ) & = \frac{{\Gamma ( i )}}{{\Gamma ( {i + \sigma } )}}\left( {^ - J_{i - 1}^{( { - \sigma , - \sigma } )}( x ) - \frac{i}{{i - \sigma }}{\;^ - }J_i^{( { - \sigma , - \sigma } )}( x )} \right),\\
{\psi _j}( t ) & = \frac{{\Gamma ( j )}}{{\Gamma ( {j + s} )}}{\left( {\frac{T}{2}} \right)^{s - \frac{1}{2}}}{\left( {j - \frac{1}{2}} \right)^{\frac{1}{2}}}{\;^ - }J_{j - 1}^{( { - s, - s} )}\left( {\frac{{2t}}{T} - 1} \right),
\end{align}
and spectral basis functions of test functions
\begin{align}
{{\bar \varphi }_{i'}}( x ) & = \frac{{\Gamma ( {i'} )}}{{\Gamma ( {{i'} + \sigma } )}}\left( {^ + J_{{i'} - 1}^{( { - \sigma , - \sigma } )}( x ) + \frac{{i'}}{{{i'} - \sigma }}{{}^ + }J_{i'}^{( { - \sigma , - \sigma } )}( x )} \right),\\
{{\bar \psi }_{j'}}( t ) & = \frac{{\Gamma ( {j'} )}}{{\Gamma ( {{j'} + s} )}}{\left( {\frac{T}{2}} \right)^{s - \frac{1}{2}}}{\left( {{j'} - \frac{1}{2}} \right)^{\frac{1}{2}}}{{}^ + }J_{{j'} - 1}^{( { - s, - s} )}\left( {\frac{{2t}}{T} - 1} \right).
\end{align}
Furthermore, we define trial and test function spectral spaces.
\begin{align*}
{X_M}( \Lambda  ): = {\rm{span}} \left\lbrace {{\varphi _i}( x ):1 \leqslant i \leqslant M - 1,\;x \in \Lambda } \right\rbrace , & \;{Y_N}( I ): = {\rm{span}} \left\lbrace {{\psi _j}\left( {\frac{{2t}}{T} - 1} \right):1 \leqslant j \leqslant N,t \in I} \right\rbrace ;\\
{{\bar X}_M}( \Lambda  ): = {\rm{span}} \left\lbrace {{{\bar \varphi }_{i'}}( x ):1 \leqslant {i'} \leqslant M - 1,\;x \in \Lambda } \right\rbrace , & \;{{\bar Y}_N}( I ): = {\rm{span}} \left\lbrace {{{\bar \psi }_{j'}}\left( {\frac{{2t}}{T} - 1} \right):1 \leqslant j' \leqslant N,t \in I} \right\rbrace .
\end{align*}
Obviously, let
\[{V_L}( Q_T ): = {X_M}( \Lambda  ) \otimes {Y_N}( I ),\;\;{{\bar V}_L}( Q_T ): = {{\bar X}_M}( \Lambda  ) \otimes {{\bar Y}_N}( I )\]
be the solution space with the pair of positive integers $L:=({M,N})$. These basis functions satisfy the bounded conditions, i.e.
\begin{align*}
{\varphi _i}( { - 1} ) & = \frac{{\Gamma ( i )}}{{\Gamma ( {i + \sigma } )}}( {^ - J_{i - 1}^{( { - \sigma , - \sigma } )}( { - 1} ) - \frac{i}{{i - \sigma }}{{}^ - }J_i^{( { - \sigma , - \sigma } )}( { - 1} )} ) = 0,\\
{\varphi _i}( 1 ) &  = \frac{{\Gamma ( i )}}{{\Gamma ( {i + \sigma } )}}( {^ - J_{i - 1}^{( { - \sigma , - \sigma } )}( 1 ) - \frac{i}{{i - \sigma }}{{}^ - }J_i^{( { - \sigma , - \sigma } )}( 1 )} )\\
 &  = \frac{{\Gamma ( i )}}{{\Gamma ( {i + \sigma } )}}{2^\sigma }\left( {\frac{{\Gamma ( {i - \sigma } )}}{{\Gamma ( i )\Gamma ( {1 - \sigma } )}} - \frac{i}{{i - \sigma }}\frac{{\Gamma ( {i + 1 - \sigma } )}}{{\Gamma ( {i + 1} )\Gamma ( {1 - \sigma } )}}} \right) = 0,\\
{\psi _j}( 0 )  & = \frac{{\Gamma ( j )}}{{\Gamma ( {j + s} )}}{\left( {\frac{T}{2}} \right)^{s - \frac{1}{2}}}{\left( {j - \frac{1}{2}} \right)^{\frac{1}{2}}}{{}^ - }J_{j - 1}^{( { - s, - s} )}( { - 1} ) = 0; \\
{{\bar \varphi }_{i'}}( { - 1} ) &  = \frac{{\Gamma ( {i'} )}}{{\Gamma ( {{i'} + \sigma } )}}( {^ + J_{{i'} - 1}^{( { - \sigma , - \sigma } )}( { - 1} ) + \frac{{i'}}{{{i'} - \sigma }}{{}^ + }J_{i'}^{( { - \sigma , - \sigma } )}( { - 1} )} )\\
 &  = \frac{{\Gamma ( {i'} )}}{{\Gamma ( {{i'} + \sigma } )}}{2^\sigma }{( { - 1} )^{i'}}\left( {\frac{{\Gamma ( {{i'} - \sigma } )}}{{\Gamma ( {i'} )\Gamma ( {1 - \sigma } )}} - \frac{{i'}}{{{i'} - \sigma }}\frac{{\Gamma ( {{i'} + 1 - \sigma } )}}{{\Gamma ( {{i'} + 1} )\Gamma ( {1 - \sigma } )}}} \right) = 0,\\
{{\bar \varphi }_{i'}}( 1 ) &  = \frac{{\Gamma ( {i'} )}}{{\Gamma ( {{i'} + \sigma } )}}( {^ + J_{{i'} - 1}^{( { - \sigma , - \sigma } )}( 1 ) + \frac{{i'}}{{{i'} - \sigma }}{{}^ + }J_{i'}^{( { - \sigma , - \sigma } )}( 1 )} ) = 0,\\
{{\bar \psi }_{j'}}( T ) &  = \frac{{\Gamma ( {j'} )}}{{\Gamma ( {{j'} + s} )}}{\left( {\frac{T}{2}} \right)^{s - \frac{1}{2}}}{\left( {{j'} - \frac{1}{2}} \right)^{\frac{1}{2}}}{{}^ + }J_{{j'} - 1}^{( { - s, - s} )}( 1 ) = 0.
\end{align*}
Thanks to Lemma \ref{FDJF}, it is convenient to calculate that
\begin{align}
\label{basictest-dxdt}
_{ - 1}\partial_x^\sigma {\varphi _i}( x ) & = {L_{i - 1}}( x ) - \frac{{i + \sigma }}{{i - \sigma }}{L_i}( x ),\;{{}_0}\partial_t^s{\psi _j}( t ) = {\left( {\frac{T}{2}} \right)^{ - \frac{1}{2}}}{\left( {j - \frac{1}{2}} \right)^{\frac{1}{2}}}{L_{j - 1}}\left( {\frac{{2t}}{T} - 1} \right),\\
_x\partial_1^\sigma {{\bar \varphi }_{i'}}( x ) & = {L_{{i'} - 1}}( x ) + \frac{{{i'} + \sigma }}{{{i'} - \sigma }}{L_{i'}}( x ),\;{{}_t}\partial_T^s{{\bar \psi }_{j'}}( t ) = {\left( {\frac{T}{2}} \right)^{ - \frac{1}{2}}}{\left( {{j'} - \frac{1}{2}} \right)^{\frac{1}{2}}}{L_{{j'} - 1}}\left( {\frac{{2t}}{T} - 1} \right),
\end{align}
where $\left\lbrace L_n (x) \right\rbrace_{n=0}^{+\infty}$ present Legendre polynomials. It shows that
\begin{align*}
\begin{aligned}
& {( {_{ - 1}\partial_x^\sigma {\varphi _i},{{}_x}\partial_1^\sigma {{\bar \varphi }_{i'}}} )_{\Lambda }} = \left\{ {\begin{aligned}
{\frac{2}{{2i - 1}} - {{\left( {\frac{{i + \sigma }}{{i - \sigma }}} \right)}^2}\frac{2}{{2i + 1}},\;\;i = i',}\\
{ - \frac{{i + \sigma }}{{i - \sigma }} \cdot \frac{2}{{2i + 1}},\;\;\;\;\;\;\;\;\;\;\;i = i' - 1,}\\
{\frac{{i' + \sigma }}{{i' - \sigma }} \cdot \frac{2}{{2i' + 1}},\;\;\;\;\;\;\;\;\;\;\;\;i = i' + 1,}\\
{0,\;\;\;\;\;\;\;\;\;\;\;\;\;\;\;\;\;\;\;\;\;\;\;\;\;\;\;\;\;\;\;\;{\rm{otherwise},}}
\end{aligned}} \right.\\
& {( {_0\partial_t^s{\psi _j},{{}_t}\partial_T^s{{\bar \psi }_n}} )_{I}} = {\delta _{jj'}}.
\end{aligned}
\end{align*}
More generally,
\begin{align*}
_{ - 1}\partial_x^\rho {\varphi _i}( x ) & = \frac{{\Gamma ( i )}}{{\Gamma ( {i + \sigma  - \rho } )}}\left( {^ - J_{i - 1}^{( { - \sigma  + \rho , - \sigma  + \rho } )}( x ) - \frac{{i( {i + \sigma } )}}{{( {i - \sigma } )( {i + \sigma  - \rho } )}}{{}^ - }J_i^{( { - \sigma  + \rho , - \sigma  + \rho } )}( x )} \right),\\
_x\partial_1^\rho {{\bar \varphi }_{i'}}( x )  & = \frac{{\Gamma ( {i'} )}}{{\Gamma ( {i' + \sigma  - \rho } )}}\left( {^ + J_{i' - 1}^{( { - \sigma  + \rho , - \sigma  + \rho } )}( x ) + \frac{{i'( {i' + \sigma } )}}{{( {i' - \sigma } )( {i' + \sigma  - \rho } )}}{{}^ + }J_{i'}^{( { - \sigma  + \rho , - \sigma  + \rho } )}( x )} \right),\\
_0\partial_t^r{\psi _j}( t )  & = \frac{{\Gamma ( j )}}{{\Gamma ( {j + s - r} )}}{\left( {\frac{T}{2}} \right)^{s - r - \frac{1}{2}}}{\left( {j - \frac{1}{2}} \right)^{\frac{1}{2}}}{{}^ - }J_{j - 1}^{( { - s + r, - s + r} )}\left( {\frac{{2t}}{T} - 1} \right),\\
_t\partial_T^r{{\bar \psi }_{j'}}( t )  & = \frac{{\Gamma ( {j'} )}}{{\Gamma ( {j' + s - r} )}}{\left( {\frac{T}{2}} \right)^{s - r - \frac{1}{2}}}{\left( {j' - \frac{1}{2}} \right)^{\frac{1}{2}}}{{}^ + }J_{j' - 1}^{( { - s + r, - s + r} )}\left( {\frac{{2t}}{T} - 1} \right).
\end{align*}

Our goal here is to find ${{u}_{L}}\in {V_{L}( Q_T )} $ in problem \eqref{3-1} by a GJF-space-time spectral Petrov-Galerkin approximation, such that
\begin{align}
\label{4-5}
\mathscr{A}_\varepsilon ^{(\alpha ,\beta ,\gamma ,\mu )}( u_L,v_L )=F( {{v}_{L}} ),\ \forall v_L\in {\bar{V}}_L( Q_T ). 
\end{align}
Since ${V_{L}( Q_T )}$ (${\bar{V}_L( Q_T )}$ \textit{resp.}) is a subspace of $ {{}_0B^{\frac{\alpha }{2},\frac{\beta }{2},\frac{\gamma }{2},\frac{\mu }{2}}}( Q_T )$ ($ {{}^0B^{\frac{\alpha }{2},\frac{\beta }{2},\frac{\gamma }{2},\frac{\mu }{2}}}( Q_T )$ \textit{resp.}), the well-posedness of the Galerkin formulation \eqref{4-5} can be established similarly as in the continuous case \eqref{3-1}.

\begin{theorem}
\label{thm4-1} For all $ \alpha,\gamma  \in ( {0,1} ),\;\beta ,\mu  \in ( {1,2} ),\;\varepsilon  \in [ {0,\min\{2^{\gamma - \alpha},1 \} } ] $, $ f \in {( {{B^{\frac{\alpha }{2},\frac{\beta }{2},\frac{\gamma }{2},\frac{\mu }{2}}}( Q_T )} )^\prime } $, 
%and $u_L$ satisfies
%\begin{align*}
%\label{numericalweakformulationcondition}
%\int_{-1}^1 u_L(x,t) \omega^{(-\sigma,0)} {\rm d}x = 0, \ \ \ \ \ \ \ \ \ \ \int_0^T u_L(x,t) \bar{\omega}^{(-s,0)} {\rm d}t = 0, 
%\end{align*}
the problem \eqref{4-5} is well-posed. Furthermore, if $u_{L}$ is the solution of problem \eqref{4-5}, then it holds the stability that
\begin{equation}
\label{4-6}
\left\| {u}_{L} \right\|_{{B^{\frac{\alpha }{2},\frac{\beta }{2},\frac{\gamma }{2},\frac{\mu }{2}}}( Q_T )} \lesssim {\left\| f \right\|_{{{\left( {{B^{\frac{\alpha }{2},\frac{\beta }{2},\frac{\gamma }{2},\frac{\mu }{2}}}( Q_T )} \right)}^\prime }}}.
\end{equation}
\end{theorem}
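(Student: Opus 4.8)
The plan is to regard \eqref{4-5} as the conforming Petrov--Galerkin restriction of the continuous variational problem \eqref{3-1} and to verify the hypotheses of the finite--dimensional Babu\v{s}ka theorem on the pair $V_L(Q_T)\times\bar V_L(Q_T)$: boundedness of $\mathscr{A}_\varepsilon^{(\alpha,\beta,\gamma,\mu)}$, a discrete inf--sup condition, and the transposed nondegeneracy condition. Because $V_L\subset{}_0B^{\frac{\alpha}{2},\frac{\beta}{2},\frac{\gamma}{2},\frac{\mu}{2}}(Q_T)$ and $\bar V_L\subset{}^0B^{\frac{\alpha}{2},\frac{\beta}{2},\frac{\gamma}{2},\frac{\mu}{2}}(Q_T)$ are conforming, the boundedness
\[ |\mathscr{A}_\varepsilon^{(\alpha,\beta,\gamma,\mu)}(u_L,v_L)| \lesssim \|u_L\|_{B^{\frac{\alpha}{2},\frac{\beta}{2},\frac{\gamma}{2},\frac{\mu}{2}}(Q_T)}\,\|v_L\|_{B^{\frac{\alpha}{2},\frac{\beta}{2},\frac{\gamma}{2},\frac{\mu}{2}}(Q_T)} \]
is inherited verbatim from the estimate established for Theorem \ref{thm3-1} in \ref{AppendixA}, simply by restricting the arguments to the subspaces. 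Moreover, since $\dim V_L=\dim\bar V_L=(M-1)N$, the discrete inf--sup and its transpose are both equivalent to nonsingularity of the square stiffness matrix, so it suffices to produce a single inf--sup estimate, ideally with a constant independent of $L$ so that \eqref{4-6} follows with a uniform constant.

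The heart of the argument is this discrete inf--sup condition, and here I would exploit the \emph{mirror} structure deliberately built into the two bases. Inspecting the definitions, the trial functions $\varphi_i,\psi_j$ are assembled from the left functions ${}^-J$, whereas the test functions $\bar\varphi_{i'},\bar\psi_{j'}$ are assembled from the right functions ${}^+J$ with the sign of the companion term flipped from $-$ to $+$; under the reflections $x\mapsto-x$ on $\Lambda$ and $t\mapsto T-t$ on $I$ the argument $\tfrac{2t}{T}-1$ changes sign and the two families are interchanged. Consequently the linear map $\Phi$ sending $u_L=\sum_{i,j}c_{ij}\,\varphi_i(x)\,\psi_j(t)$ to $v_L=\sum_{i,j}c_{ij}\,\bar\varphi_i(x)\,\bar\psi_j(t)$ is a bijection of $V_L$ onto $\bar V_L$ with $\|\Phi u_L\|_{B}\cong\|u_L\|_{B}$. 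The plan is to choose the test function $v_L=\Phi u_L$ and to establish the coercivity
\[ \mathscr{A}_\varepsilon^{(\alpha,\beta,\gamma,\mu)}(u_L,\Phi u_L) \gtrsim \|u_L\|_{B^{\frac{\alpha}{2},\frac{\beta}{2},\frac{\gamma}{2},\frac{\mu}{2}}(Q_T)}^2 , \]
which is exactly the estimate underlying Theorem \ref{thm3-1}; since $\Phi(V_L)=\bar V_L$, that estimate descends to the discrete spaces with the \emph{same} constant, yielding an $L$--independent discrete inf--sup and hence the stability bound \eqref{4-6}.

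To make the coercivity explicit at the discrete level I would use Lemma \ref{lemma2-3} to split each fractional term of $\mathscr{A}_\varepsilon$ into a product of half--order derivatives, and then the derivative formulas \eqref{basictest-dxdt} (together with Lemma \ref{FDJF} and the general expressions for ${}_{-1}\partial_x^\rho\varphi_i$ and ${}_0\partial_t^r\psi_j$). These reduce the diffusion and mass contributions to integrals of Legendre polynomials, so that $({}_0\partial_t^s\psi_j,{}_t\partial_T^s\bar\psi_n)_I=\delta_{jn}$ and the spatial pairing $({}_{-1}\partial_x^\sigma\varphi_i,{}_x\partial_1^\sigma\bar\varphi_{i'})_\Lambda$ has the sparse, sign--definite structure recorded above; this is what renders the principal part of $\mathscr{A}_\varepsilon(u_L,\Phi u_L)$ positive. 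The main obstacle is the indefinite viscosity cross--term $-\varepsilon({}_0\partial_t^{\gamma/2}{}_{-1}\partial_x^{\mu/2}u_L,\,{}_t\partial_T^{\gamma/2}{}_x\partial_1^{\mu/2}\Phi u_L)_{Q_T}$ together with the minus sign on the spatial diffusion term: neither is sign--definite, and controlling them is precisely where the hypothesis $\varepsilon\le\min\{2^{\gamma-\alpha},1\}$ enters. I expect to bound the cross--term against the positive temporal and spatial half--derivative energies by a Cauchy--Schwarz and Young argument, the explicit factor $2^{\gamma-\alpha}$ arising from the mismatch between the temporal orders $\alpha/2$ and $\gamma/2$ once $t\in I$ is rescaled to the reference interval; the constraint on $\varepsilon$ then guarantees that the surviving positive part dominates, giving the lower bound uniformly in $L$. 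With inherited continuity, the uniform inf--sup estimate, and the equality of dimensions in hand, the discrete Babu\v{s}ka theorem delivers well--posedness of \eqref{4-5} and the stability estimate \eqref{4-6}.
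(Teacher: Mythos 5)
You follow the paper's frame up to a point: Theorem \ref{thm4-1} is indeed settled in the paper by observing that $V_L\subset{}_0B^{\frac{\alpha}{2},\frac{\beta}{2},\frac{\gamma}{2},\frac{\mu}{2}}(Q_T)$, $\bar V_L\subset{}^0B^{\frac{\alpha}{2},\frac{\beta}{2},\frac{\gamma}{2},\frac{\mu}{2}}(Q_T)$ and rerunning the Babu\v{s}ka argument of \ref{AppendixA}, whose inf--sup part is in fact already carried out on finite sums (the limit $L\to(+\infty,+\infty)$ is taken only at the very end), so it restricts to \eqref{4-5} essentially verbatim; your remarks on inherited continuity and on $\dim V_L=\dim\bar V_L=(M-1)N$ are fine. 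The genuine gap is your key step: the coercivity claim $\mathscr{A}_\varepsilon^{(\alpha,\beta,\gamma,\mu)}(u_L,\Phi u_L)\gtrsim\|u_L\|^2_{B^{\frac{\alpha}{2},\frac{\beta}{2},\frac{\gamma}{2},\frac{\mu}{2}}(Q_T)}$ for the map $\Phi$ that transports the \emph{native} coefficients $c_{ij}$ from $\varphi_i\psi_j$ to $\bar\varphi_i\bar\psi_j$. This is not ``exactly the estimate underlying Theorem \ref{thm3-1}'': no such coercivity is proved (or used) anywhere in the paper, and it is far from evident. With $v_L=\Phi u_L$ the bilinear form does not diagonalize: only the top-order pairings are sparse (cf. \eqref{basictest-dxdt}), while the lower-order pairings $p_{ii'}^{(\rho)}$, $q_{jj'}^{(r)}$ with $\rho<\sigma$, $r<s$ couple Jacobi polynomials with mismatched parameter pairs, e.g. $P_{i-1}^{(-\sigma+\rho,\sigma-\rho)}$ against $P_{i'-1}^{(\sigma-\rho,-\sigma+\rho)}$ under the weight $\omega^{(\sigma-\rho,\sigma-\rho)}$, and hence produce full matrices with no sign structure; the mass term $(u_L,\Phi u_L)_{Q_T}$ is likewise not sign-definite. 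Note also that the reflection $(x,t)\mapsto(-x,T-t)$ sends $\varphi_i\psi_j$ to $(-1)^{i+j}\bar\varphi_i\bar\psi_j$, with alternating signs, so your $\Phi$ is not the reflection isometry, and even the norm equivalence $\|\Phi u_L\|\cong\|u_L\|$ with an $L$-independent constant is unproved. Invoking Cauchy--Schwarz and Young on the indefinite terms is a hope, not a mechanism.

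What the paper actually does --- and what is missing from your proposal --- is a mode-wise \emph{rescaled} test function built in a transformed expansion, not a coefficient-copying one. One rewrites ${}_0\partial_t^r{}_{-1}\partial_x^\rho u_L$ with coefficients $\bar u_{ij}$ in the basis $(1+x)^{\sigma-\rho}P_{i-1}^{(\sigma-\rho,\sigma-\rho)}(x)\,t^{s-r}P_{j-1}^{(s-r,s-r)}(\frac{2t}{T}-1)$ and takes $v^*$ with coefficients $\bar u_{mn}/\bar C_{mn}^{(\alpha,\beta,\gamma,\mu)}$ in the right-sided analogue. Because the $(1+x)$- and $(1-x)$-type weights combine into the exact Jacobi weight, every one of the four terms of $\mathscr{A}_\varepsilon$ becomes diagonal (see \eqref{innerproduct}), giving $\mathscr{A}_\varepsilon^{(\alpha,\beta,\gamma,\mu)}(u_L,v^*)=\sum_{i,j}\bar u_{ij}^2$ exactly; the entire difficulty is then concentrated in the positivity of
\begin{equation*}
\bar C_{ij}^{(\alpha,\beta,\gamma,\mu)}=\bar\gamma_{i-1}^{(\sigma,\sigma)}\bar\gamma_{j-1}^{(s-\frac{\alpha}{2},s-\frac{\alpha}{2})}-\bar\gamma_{i-1}^{(\sigma-\frac{\beta}{2},\sigma-\frac{\beta}{2})}\bar\gamma_{j-1}^{(s,s)}-\varepsilon\,\bar\gamma_{i-1}^{(\sigma-\frac{\mu}{2},\sigma-\frac{\mu}{2})}\bar\gamma_{j-1}^{(s-\frac{\gamma}{2},s-\frac{\gamma}{2})}+\bar\gamma_{i-1}^{(\sigma,\sigma)}\bar\gamma_{j-1}^{(s,s)},
\end{equation*}
which is Proposition \ref{prop1}, proved via the monotonicity of $\alpha\mapsto\bar\gamma_n^{(\alpha,\alpha)}$ (Lemma \ref{increaselemma}) and the two-sided bound $\bar\gamma_n^{(a,a)}\leqslant\bar\gamma_n^{(b,b)}\leqslant 2^{2(b-a)}\bar\gamma_n^{(a,a)}$ that follows it. That is the one and only place where the hypothesis $\varepsilon\leqslant\min\{2^{\gamma-\alpha},1\}$ enters; your conjecture that $2^{\gamma-\alpha}$ should emerge from a Young-inequality absorption of the viscosity cross term after rescaling $t$ does not correspond to any step that can actually be carried out. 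Without the transformed expansion, the $1/\bar C_{ij}$ rescaling, and Proposition \ref{prop1}, your inf--sup step has no proof.
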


In order to calculate integrals suitably and enough accurately, we should lead in suitable Gauss quadrature and ensure that the precision of numerical solution would not influence that of this algorithm. For this reason, we will use Jacobi-Gauss-type quadrature \textit{(see Theorem 3.25-3.27 in \cite{STW})} to complete the calculation.
At present, we construct the weak formulation, i.e., find ${u}_{L}\in {V}_{L}$, and for all $ {v}_{L}\in {\bar{V}}_{L}$, such that
\begin{align}
\label{WF}
{( {_0\partial _t^{\frac{\alpha }{2}}{u_L},{_t}\partial _T^{\frac{\alpha }{2}}{v_L}} )_{Q_T}} - {( {_{ - 1}\partial _x^{\frac{\beta }{2}}{u_L},{_x}\partial _1^{\frac{\beta }{2}}{v_L}} )_{Q_T}} - \varepsilon  \cdot {( {_0\partial _t^{\frac{\gamma }{2}}{_{ - 1}}\partial _x^{\frac{\mu }{2}}{u_L},{_t}\partial _T^{\frac{\gamma }{2}}{_x}\partial _1^{\frac{\mu }{2}}{v_L}} )_{Q_T}} + {( {u_L},{v_L} )_{Q_T}}= {( {f,{v_L}} )_{Q_T}}.
\end{align}

First, for separating variables, we need to choose two classes of suitable basis functions in different directions. We construct the exact solution $ {u}( x,t ) $ with the following format
\begin{equation}
\label{ES}
{u}( x,t )=\sum\limits_{i=1}^{+\infty}{\sum\limits_{j=1}^{+\infty}{{{u}_{ij}}{{\varphi }_{i}}( x ){{\psi }_{j}}( t )}},
\end{equation}
and numerical solution $ {u}_{L}( x,t ) $ respectively
\begin{equation}
\label{NS}
{u}_{L}( x,t )=\sum\limits_{i=1}^{M-1}{\sum\limits_{j=1}^{N}{{{\check{u}}_{ij}}{{\varphi }_{i}}( x ){{\psi }_{j}}( t )}}.
\end{equation}
Combine the format \eqref{NS} with the weak formulation \eqref{WF}, let ${v_L}( {x,t} ) = {{\bar \varphi }_{i'}}( x ){{\bar \psi }_{j'}}( t )$ for all $i' = 1,2,\cdots,M-1; \; j' = 1,2,\cdots,N$, we could write as a matrix form:
\begin{align}
P_0^T\check{U}{Q_{\frac{\alpha }{2}}} - P_{\frac{\beta }{2}}^T\check{U}{Q_0} - \varepsilon  \cdot P_{\frac{\mu }{2}}^T\check{U}{Q_{\frac{\gamma }{2}}} + P_0^T\check{U}{Q_0}= F,
\end{align}
where
\[\check{U} = {( {{\check{u}_{ij}}} )_{( {M - 1} ) \times N}}\]
is willing to be solved, and
\[{P_\rho } = {( {p_{ii'}^{( \rho  )}} )_{( {M - 1} ) \times ( {M - 1} )}},\;\;{Q_r} = {( {q_{jj'}^{( r )}} )_{N \times N}},\;\;F = {( {{f_{i'j'}}} )_{( {M - 1} ) \times N}}\]
satisfy
\[p_{ii'}^{( \rho  )} = {( {_{ - 1}\partial_x^\rho {\varphi _i},{{}_x}\partial_1^\rho {{\bar \varphi }_{i'}}} )_\Lambda },\;\;q_{jj'}^{( r )} = {( {_0\partial_t^r{\psi _j},{{}_t}\partial_T^r{{\bar \psi }_{j'}}} )_I},\;\;{f_{i'j'}} = {( {f,{{\bar \varphi }_{i'}}{{\bar \psi }_{j'}}} )_{Q_T}}.\]

Similarly, take
\[{f_{i'j'}} = \sum\limits_{\bar m = 1}^{M - 1} {\sum\limits_{\bar n = 1}^N {f( {{x_{\bar m}},{t_{\bar n}}} )\frac{{{{\bar \varphi }_{i'}}( {{x_{\bar m}}} )}}{{{{( {1 - {x_{\bar m}}} )}^\sigma }}}\frac{{{{\bar \psi }_{j'}}( {{t_{\bar n}}} )}}{{{{( {T - {t_{\bar n}}} )}^s}}}{\omega _{\bar m}}{{\bar \omega }_{\bar n}}} }, \]
where $x_{\bar{m}}$ are the JGL points with special parameters (depend on fractional orders in space direction), ${{\omega }_{\bar m}}$ are weights of JGL quadrature with corresponding parameters with those of JGL points, ${y}_{\bar{n}}$ are the JGR points with special parameters (depend on fractional orders in time direction), ${\hat{\omega }_{\bar n}}$ are weights of JGR quadrature with the same parameters with those of JGR points, ${{t}_{\bar n}}=\frac{T}{2}( {{y}_{\bar n}}+1 )$, $ {{{\bar{\omega }}}_{\bar n}}=\frac{T}{2}{\hat{\omega }_{\bar n}}$.

\subsection{Error Estimates}
We now analyse the error of GJF-space-time spectral Petrov-Galerkin method in different directions. Therefore, it is necessary to lead in the approximation operators as follows. 
\begin{definition}
\label{op}
Let $k,l,m,n$ be non-negative integers, satisfy $0 \leqslant k \leqslant m$ and $0 \leqslant l \leqslant n $. Define the orthogonal projector ${\Pi}_{L}^{\sigma,s}: {}_0B^{\frac{\alpha}{2}+l,\frac{\beta}{2}+k,\frac{\gamma}{2}+l,\frac{\mu}{2}+k} ( Q_T ) \mapsto V_L ( Q_T ) $ by $\forall w \in {}_0B^{\frac{\alpha}{2}+l,\frac{\beta}{2}+k,\frac{\gamma}{2}+l,\frac{\mu}{2}+k} ( Q_T ) $, ${\Pi}_{L}^{\sigma,s}w\in V_L ( Q_T ) $, such that
\[{( {w - \Pi _L^{\sigma,s}w,{\phi _L}} )_{Q_T}} = 0,\;\;\forall {\phi _L} \in {V_L}( Q_T ).\]
\end{definition}
At the beginning, let us introduce some lemmas about calculation of basis functions. Based on the basis functions constructed by GJFs, we firstly lead in the orthogonality of classical Jacobi polynomials(\textit{see Corollary 3.6 in} \cite{STW}). Suppose $\tilde \alpha ,\tilde \beta > -1 $, then for classical Jacobi polynomials $ {P_n^{( {\tilde \alpha ,\tilde \beta } )}},\;n = 0,1,2,\cdots$, where
\begin{align}
\label{Jacobi-ortho}
\int_{ - 1}^1 {P_n^{( {\tilde \alpha ,\tilde \beta } )}( x )P_{n'}^{( {\tilde \alpha ,\tilde \beta } )}( x ){\omega ^{( {\tilde \alpha ,\tilde \beta } )}}{\rm{d}}x}  = {\bar{\gamma}}_n^{({\tilde{\alpha}, \tilde{\beta}})} {\delta _{nn'}},
\end{align}
and the constant
\begin{align*}
{\bar{\gamma}}_n^{({\tilde{\alpha}, \tilde{\beta}})} = \frac{{{2^{\tilde \alpha  + \tilde \beta  + 1}}\Gamma ( {n + \tilde \alpha  + 1} )\Gamma ( {n + \tilde \beta  + 1} )}}{{( {2n + \tilde \alpha  + \tilde \beta  + 1} )n!\Gamma ( {n + \tilde \alpha  + \tilde \beta  + 1} )}}.
\end{align*}
Moreover, the integer-order derivatives of Jacobi polynomials could be calculated by(\textit{see (3.101)-(3.102) in} \cite{STW})
\begin{align}
\label{Jacobi-dk}
\frac{{\rm d}^{\bar k}}{{\rm d}x^{\bar{k}}} P_n^{( {\tilde \alpha ,\tilde \beta } )}( x ) = \frac{{\Gamma ( {n + \bar k + \tilde \alpha  + \tilde \beta  + 1} )}}{{{2^{\bar k}}\Gamma ( {n + \tilde \alpha  + \tilde \beta  + 1} )}}P_{n - \bar k}^{( {\tilde \alpha  + \bar k,\tilde \beta  + \bar k} )}( x ),\;\;n \geqslant \bar k.
\end{align}
Thus, we give the following lemmas.
\begin{lemma}
Suppose two parameters $r,\rho$, satisfying $0\leqslant r \leqslant s,\; 0 \leqslant \rho \leqslant \sigma $, it can be easily calculated that
\begin{align}
\begin{aligned}
\bar a_{ii'}^{( \rho  )}: = & {( {_{ - 1}\partial_x^\rho {\varphi _i},{\;_{ - 1}}\partial_x^\rho {\varphi _{i'}}} )_{{\omega ^{( { - \sigma  + \rho , - \sigma  + \rho } )}},\Lambda }}\\
 = & \left\{ {\begin{aligned}
{\left( {\frac{2}{{2i - 1}} \cdot \frac{{i + \sigma  - \rho }}{{i - \sigma  + \rho }} + \frac{2}{{2i + 1}}{{\left( {\frac{{i + \sigma }}{{i - \sigma }}} \right)}^2}} \right)\frac{{\Gamma ( {i - \sigma  + \rho  + 1} )}}{{\Gamma ( {i + \sigma  - \rho  + 1} )}},\;\;i = i',}\\
{ - \frac{2}{{2i + 1}} \cdot \frac{{i + \sigma }}{{i - \sigma }} \cdot \frac{{\Gamma ( {i - \sigma  + \rho  + 1} )}}{{\Gamma ( {i + \sigma  - \rho  + 1} )}},\;\;\;\;\;\;\;\;\;\;\;\;\;\;\;\;\;\;\;\;\;\;\;\;\;\;\;\;\;\;\;i = i' - 1,}\\
{ - \frac{2}{{2i' + 1}} \cdot \frac{{i' + \sigma }}{{i' - \sigma }} \cdot \frac{{\Gamma ( {i' - \sigma  + \rho  + 1} )}}{{\Gamma ( {i' + \sigma  - \rho  + 1} )}},\;\;\;\;\;\;\;\;\;\;\;\;\;\;\;\;\;\;\;\;\;\;\;\;\;\;\;\;\;i = i' + 1,}\\
{0,\;\;\;\;\;\;\;\;\;\;\;\;\;\;\;\;\;\;\;\;\;\;\;\;\;\;\;\;\;\;\;\;\;\;\;\;\;\;\;\;\;\;\;\;\;\;\;\;\;\;\;\;\;\;\;\;\;\;\;\;\;\;\;\;\;\;\;\;\;\;\;\;\;\;\;\;\;\;\;\;\;{\rm{otherwise}.}}
\end{aligned}} \right.
\end{aligned}\\
\begin{aligned}
\bar b_{jj'}^{( r )}: = {( {_0\partial_t^r{\psi _j},{{}_0}\partial_t^r{\psi _{j'}}} )_{{{\bar \omega }^{( { - s + r, - s + r} )}},I}} = {\left( {\frac{T}{2}} \right)^{2( {s - r} )}}\frac{{\Gamma ( {j - s + r} )}}{{\Gamma ( {j + s - r} )}}{\delta _{jj'}}.\;\;\;\;\;\;\;\;\;\;\;\;\;\;\;\;
\end{aligned}
\end{align}
\end{lemma}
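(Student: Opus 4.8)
The plan is to reduce both weighted inner products to the classical Jacobi orthogonality relation \eqref{Jacobi-ortho}, exploiting the closed forms of $_{-1}\partial_x^\rho{\varphi _i}$ and $_0\partial_t^r{\psi _j}$ that were already derived (via Lemma \ref{FDJF}) in the display preceding the statement. The decisive structural fact is that each of these fractional derivatives is a short combination of generalized Jacobi functions of parameter $(-\sigma+\rho,-\sigma+\rho)$ (resp.\ $(-s+r,-s+r)$), and that the weights $\omega^{(-\sigma+\rho,-\sigma+\rho)}$ and $\bar\omega^{(-s+r,-s+r)}$ are tuned precisely to those parameters.

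For $\bar a_{ii'}^{(\rho)}$, I would substitute the two-term expression $_{-1}\partial_x^\rho{\varphi _i} = c_i({}^-J_{i-1}^{(-\sigma+\rho,-\sigma+\rho)} - d_i\,{}^-J_i^{(-\sigma+\rho,-\sigma+\rho)})$ with $c_i:=\Gamma(i)/\Gamma(i+\sigma-\rho)$ and $d_i:=i(i+\sigma)/[(i-\sigma)(i+\sigma-\rho)]$, then expand the bilinear form into four weighted products of two generalized Jacobi functions. The key reduction is to write ${}^-J_n^{(-\sigma+\rho,-\sigma+\rho)}(x)=(1+x)^{\sigma-\rho}P_n^{(-\sigma+\rho,\sigma-\rho)}(x)$, so that the factor $(1+x)^{2(\sigma-\rho)}$ absorbs into $\omega^{(-\sigma+\rho,-\sigma+\rho)}$ and yields exactly $\omega^{(-\sigma+\rho,\sigma-\rho)}$; each product is then governed by \eqref{Jacobi-ortho} with parameters summing to zero, whose constant simplifies to $\bar\gamma_n^{(-\sigma+\rho,\sigma-\rho)}=\tfrac{2}{2n+1}\tfrac{\Gamma(n+1-\sigma+\rho)\Gamma(n+1+\sigma-\rho)}{(n!)^2}$. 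Orthogonality annihilates every index-mismatched product, leaving exactly the banded structure claimed: the diagonal $i=i'$ keeps the $g_{i-1}$ term together with $d_i^2\,g_i$, each of $i=i'\mp1$ keeps a single surviving cross term, and $|i-i'|\ge2$ vanishes. It then remains to simplify the Gamma quotients into the displayed coefficients, using the symmetry of the inner product to pass from the $i=i'-1$ entry to the $i=i'+1$ entry.

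For $\bar b_{jj'}^{(r)}$, I would substitute the single-term expression $_0\partial_t^r{\psi _j}=\tfrac{\Gamma(j)}{\Gamma(j+s-r)}(T/2)^{s-r-1/2}(j-\tfrac12)^{1/2}\,{}^-J_{j-1}^{(-s+r,-s+r)}(2t/T-1)$ and perform the affine change of variables $\xi=2t/T-1$ carrying $I$ onto $\Lambda$. This pulls $\bar\omega^{(-s+r,-s+r)}$ back to the reference Jacobi weight $\omega^{(-s+r,-s+r)}$ and leaves only the Jacobian $T/2$; combined with the prefactor $(T/2)^{2(s-r-1/2)}=(T/2)^{2(s-r)-1}$ coming from the two copies of $_0\partial_t^r{\psi _j}$, this produces the overall factor $(T/2)^{2(s-r)}$. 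Applying \eqref{Jacobi-ortho} to $P_{j-1}^{(-s+r,s-r)}$ and $P_{j'-1}^{(-s+r,s-r)}$ against $\omega^{(-s+r,s-r)}$ gives the Kronecker $\delta_{jj'}$, and then the identity $(j-\tfrac12)\cdot\tfrac{2}{2j-1}=1$, the cancellation $\Gamma(j)^2=((j-1)!)^2$ against the denominator of $\bar\gamma_{j-1}^{(-s+r,s-r)}$, and one cancellation of $\Gamma(j+s-r)$ collapse everything to $(T/2)^{2(s-r)}\tfrac{\Gamma(j-s+r)}{\Gamma(j+s-r)}\delta_{jj'}$.

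Once the orthogonality reduction is set up, all computations are elementary, so the main obstacle is purely organizational bookkeeping. For $\bar a_{ii'}^{(\rho)}$ it lies in tracking which of the four products survive in each diagonal band and in compressing the resulting Gamma quotients into the compact form stated. For $\bar b_{jj'}^{(r)}$ it lies in the careful accounting of the powers of $T/2$: one must combine the scaling introduced by pulling back the weight $\bar\omega^{(-s+r,-s+r)}$ and the Jacobian of $t\mapsto 2t/T-1$ correctly with the $(T/2)^{s-r-1/2}$ prefactors so that the net exponent is exactly $2(s-r)$.
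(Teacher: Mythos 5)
Your proposal takes exactly the route the paper intends: the paper never writes out a proof (the lemma is prefaced by ``it can be easily calculated that''), but the ingredients it lines up beforehand --- the GJF expansions of $_{-1}\partial_x^\rho\varphi_i$ and $_0\partial_t^r\psi_j$, and the Jacobi orthogonality \eqref{Jacobi-ortho} --- are precisely what you assemble. Your treatment of $\bar a_{ii'}^{(\rho)}$ is correct and complete in outline: writing ${}^-J_n^{(-\sigma+\rho,-\sigma+\rho)}(x)=(1+x)^{\sigma-\rho}P_n^{(-\sigma+\rho,\sigma-\rho)}(x)$, absorbing $(1+x)^{2(\sigma-\rho)}$ into $\omega^{(-\sigma+\rho,-\sigma+\rho)}$ to produce $\omega^{(-\sigma+\rho,\sigma-\rho)}$, and invoking \eqref{Jacobi-ortho} with $\bar\gamma_n^{(-\sigma+\rho,\sigma-\rho)}=\frac{2}{2n+1}\frac{\Gamma(n+1-\sigma+\rho)\Gamma(n+1+\sigma-\rho)}{(n!)^2}$ reproduces all four bands, including the Gamma-quotient coefficients, after the cancellations you describe.

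There is, however, one caveat in your $\bar b_{jj'}^{(r)}$ accounting, and it sits exactly at the step you yourself flagged as the crux. Your assertion that the substitution $\xi=2t/T-1$ ``pulls $\bar\omega^{(-s+r,-s+r)}$ back to the reference Jacobi weight and leaves only the Jacobian $T/2$'' is valid only if $\bar\omega^{(\tilde\alpha,\tilde\beta)}$ is read as the mapped weight $\omega^{(\tilde\alpha,\tilde\beta)}(2t/T-1)$. With the paper's literal definition $\bar\omega^{(\tilde\alpha,\tilde\beta)}(t)=t^{\tilde\beta}(T-t)^{\tilde\alpha}$ one has
\begin{equation*}
\bar\omega^{(-s+r,-s+r)}(t)=\Bigl(\tfrac{T}{2}\Bigr)^{2(r-s)}\omega^{(-s+r,-s+r)}(\xi),
\end{equation*}
so the pullback carries an extra factor $(T/2)^{2(r-s)}$ that exactly cancels the prefactor $(T/2)^{2(s-r)-1}$ and the Jacobian $(T/2)$, leaving $\bar b_{jj'}^{(r)}=\frac{\Gamma(j-s+r)}{\Gamma(j+s-r)}\delta_{jj'}$ with no power of $T/2$ at all. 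In other words, the stated factor $(T/2)^{2(s-r)}$ is consistent only with the mapped-weight reading of $\bar\omega$; your bookkeeping silently adopts that reading, which is why your answer agrees with the lemma. This is an inconsistency internal to the paper (its definition of $\bar\omega$ and its lemma cannot both be taken literally) rather than a defect of your orthogonality argument, and since the factor is an $M,N$-independent constant it has no effect on the subsequent error analysis; but be aware that, under the paper's stated definitions, your pullback claim and the lemma's $(T/2)^{2(s-r)}$ stand or fall together.
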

\begin{corollary}
It holds for the special case that
\begin{align}
\begin{aligned}
\bar a_{ii'}^{( \sigma  )}: = & {( {_{ - 1}\partial_x^\sigma {\varphi _i},{_{ - 1}}\partial_x^\sigma {\varphi _{i'}}} )_{\Lambda }}\\
 = & \left\{ {\begin{aligned}
{ {\frac{2}{{2i - 1}} + \frac{2}{{2i + 1}}{{\left( {\frac{{i + \sigma }}{{i - \sigma }}} \right)}^2}} ,\;\;\;\;\;\;\;\;\;\;\;\;\;\;i = i',\;\;\;\;\;\;\;\;\;\;\;\;\;\;\;\;\;\;\;\;\;\;\;\;\;\;\;\;\;\;\;\;\;\;\;}\\
{ - \frac{2}{{2i + 1}} \cdot \frac{{i + \sigma }}{{i - \sigma }},\;\;\;\;\;\;\;\;\;\;\;\;\;\;\;\;\;\;\;\;\;\;\;\;\;\;i = i' - 1,\;\;\;\;\;\;\;\;\;\;\;\;\;\;\;\;\;\;\;\;\;\;\;\;\;\;\;\;\;\;\;\;\;\;\;}\\
{ - \frac{2}{{2i' + 1}} \cdot \frac{{i' + \sigma }}{{i' - \sigma }} ,\;\;\;\;\;\;\;\;\;\;\;\;\;\;\;\;\;\;\;\;\;\;\;\;\;i = i' + 1,\;\;\;\;\;\;\;\;\;\;\;\;\;\;\;\;\;\;\;\;\;\;\;\;\;\;\;\;\;\;\;\;\;\;\;}\\
{0,\;\;\;\;\;\;\;\;\;\;\;\;\;\;\;\;\;\;\;\;\;\;\;\;\;\;\;\;\;\;\;\;\;\;\;\;\;\;\;\;\;\;\;\;\;\;{\rm{otherwise}.\;\;\;\;\;\;\;\;\;\;\;\;\;\;\;\;\;\;\;\;\;\;\;\;\;\;\;\;\;\;\;\;\;\;\;}}
\end{aligned}} \right.
\end{aligned}\\
\begin{aligned}
\bar b_{jj'}^{( s )}: = {( {_0\partial_t^s{\psi _j},{{}_0}\partial_t^s{\psi _{j'}}} )_{I}} = {\delta _{jj'}}.\;\;\;\;\;\;\;\;\;\;\;\;\;\;\;\;\;\;\;\;\;\;\;\;\;\;\;\;\;\;\;\;\;\;\;\;\;\;\;\;\;\;\;\;\;\;\;\;\;\;\;\;\;\;\;\;\;\;\;\;\;\;\;\;\;\;\;\;\;\;
\end{aligned}
\end{align}
\end{corollary}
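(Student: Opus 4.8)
The plan is to obtain this Corollary as the endpoint case $\rho = \sigma$, $r = s$ of the preceding Lemma, so that nothing beyond a substitution is required. The pivotal observation is that the Jacobi weights degenerate to the constant weight: with $\rho = \sigma$ one has $\omega^{(-\sigma+\rho,-\sigma+\rho)} = \omega^{(0,0)} \equiv \mathbf{1}$, and with $r = s$ one has $\bar\omega^{(-s+r,-s+r)} \equiv 1$. Consequently the weighted inner products $(\cdot,\cdot)_{\omega^{(-\sigma+\rho,-\sigma+\rho)},\Lambda}$ and $(\cdot,\cdot)_{\bar\omega^{(-s+r,-s+r)},I}$ of the Lemma collapse exactly onto the unweighted $L^2(\Lambda)$ and $L^2(I)$ inner products that define $\bar a_{ii'}^{(\sigma)}$ and $\bar b_{jj'}^{(s)}$ in the Corollary. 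Since $\sigma \in (\tfrac12,1)$ and $s \in (0,\tfrac12)$ for the admissible orders $\beta,\mu \in (1,2)$, $\alpha,\gamma \in (0,1)$, and $i,j \geqslant 1$, the Lemma's hypotheses $0 \leqslant \rho \leqslant \sigma$, $0 \leqslant r \leqslant s$ are satisfied at these endpoints, so the specialization is legitimate.

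First I would substitute $\rho = \sigma$ into the three branches of the Lemma's formula for $\bar a_{ii'}^{(\rho)}$. The common Gamma factor simplifies to $\Gamma(i-\sigma+\rho+1)/\Gamma(i+\sigma-\rho+1) = \Gamma(i+1)/\Gamma(i+1) = 1$, while in the diagonal branch the extra factor $(i+\sigma-\rho)/(i-\sigma+\rho)$ reduces to $i/i = 1$. This leaves precisely
\[
\bar a_{ii'}^{(\sigma)} =
\begin{cases}
\dfrac{2}{2i-1} + \dfrac{2}{2i+1}\left(\dfrac{i+\sigma}{i-\sigma}\right)^2, & i=i',\\[2mm]
-\dfrac{2}{2i+1}\cdot\dfrac{i+\sigma}{i-\sigma}, & i=i'-1,\\[2mm]
-\dfrac{2}{2i'+1}\cdot\dfrac{i'+\sigma}{i'-\sigma}, & i=i'+1,\\[2mm]
0, & \text{otherwise.}
\end{cases}
\]
For the temporal part, setting $r=s$ gives $(T/2)^{2(s-r)} = (T/2)^{0} = 1$ and $\Gamma(j-s+r)/\Gamma(j+s-r) = \Gamma(j)/\Gamma(j) = 1$, hence $\bar b_{jj'}^{(s)} = \delta_{jj'}$, as claimed.

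As a self-contained consistency check one may bypass the Lemma entirely: by \eqref{basictest-dxdt} we have $_{-1}\partial_x^\sigma\varphi_i = L_{i-1} - \tfrac{i+\sigma}{i-\sigma}L_i$ and $_0\partial_t^s\psi_j = (T/2)^{-1/2}(j-\tfrac12)^{1/2}\,L_{j-1}(\tfrac{2t}{T}-1)$, after which the Legendre orthogonality $\int_{-1}^1 L_m L_n\,\mathrm{d}x = \tfrac{2}{2n+1}\delta_{mn}$, together with the change of variable $x = \tfrac{2t}{T}-1$ on $I$, reproduces both displayed formulas directly. There is essentially no obstacle in this argument: the entire content is the degeneration of the Jacobi weight to $\mathbf{1}$ and the bookkeeping of the Legendre normalization $\tfrac{2}{2n+1}$. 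The only point demanding a moment's care is confirming that the factors $1/(i-\sigma)$ and the Gamma quotients remain well defined for all admissible $\sigma,s$ and all $i,j \geqslant 1$, which holds since $\sigma < 1 \leqslant i$ and $s < \tfrac12 < 1 \leqslant j$.
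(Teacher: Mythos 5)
Your proposal is correct and matches the paper's intent exactly: the paper offers no separate proof of this Corollary, presenting it as the immediate specialization $\rho=\sigma$, $r=s$ of the preceding Lemma, which is precisely your substitution argument (the weights degenerate to $\mathbf{1}$ and the Gamma quotients to $1$). Your added consistency check via \eqref{basictest-dxdt} and Legendre orthogonality is the same computation the paper itself performs just after \eqref{basictest-dxdt} for the trial--test pairing, so it is a natural and sound supplement rather than a different route.
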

By \eqref{basictest-dxdt} and \eqref{Jacobi-dk}, we have
\begin{lemma}
Let $k,l$ be two non-negative integers. A direct calculation performs that
\begin{align}
\begin{aligned}
\hat a_{ii'}^{( {\sigma ,k} )} := & {( {_{ - 1}\partial_x^{\sigma  + k}{\varphi _i},{_{ - 1}}\partial_x^{\sigma  + k}{\varphi _{i'}}} )_{{\omega ^{( {k,k} )}},\Lambda }}\\
= & \left\{ {\begin{aligned}
{\left( {\frac{2}{{2i - 1}} \cdot \frac{{i - k}}{{i + k}} + \frac{2}{{2i + 1}}{{\left( {\frac{{i + \sigma }}{{i - \sigma }}} \right)}^2}} \right)\frac{{\Gamma ( {i + k + 1} )}}{{\Gamma ( {i - k + 1} )}},\;\;i = i',\;\;\;\;\;\;\;\;\;\;\;\;\;}\\
{ - \frac{2}{{2i + 1}} \cdot \frac{{i + \sigma }}{{i - \sigma }} \cdot \frac{{\Gamma ( {i + k + 1} )}}{{\Gamma ( {i - k + 1} )}},\;\;\;\;\;\;\;\;\;\;\;\;\;\;\;\;\;\;\;\;\;\;\;\;\;\;i = i' - 1,\;\;\;\;\;\;\;\;\;\;\;\;\;}\\
{ - \frac{2}{{2i' + 1}} \cdot \frac{{i' + \sigma }}{{i' - \sigma }} \cdot \frac{{\Gamma ( {i' + k + 1} )}}{{\Gamma ( {i' - k + 1} )}},\;\;\;\;\;\;\;\;\;\;\;\;\;\;\;\;\;\;\;\;\;\;\;\;i = i' + 1,\;\;\;\;\;\;\;\;\;\;\;\;\;}\\
{0,\;\;\;\;\;\;\;\;\;\;\;\;\;\;\;\;\;\;\;\;\;\;\;\;\;\;\;\;\;\;\;\;\;\;\;\;\;\;\;\;\;\;\;\;\;\;\;\;\;\;\;\;\;\;\;\;\;\;\;\;\;\;\;\;\;\;\;\;{\rm{otherwise}}.\;\;\;\;\;\;\;\;\;\;\;\;\;}
\end{aligned}} \right.
\end{aligned}\\
\begin{aligned}
\hat b_{jj'}^{( {s,l} )} : = {( {_0\partial_t^{s + l}{\psi _j},{{}_0}\partial_t^{s + l}{\psi _{j'}}} )_{{{\bar \omega }^{( {l,l} )}},I}} = {\left( {\frac{T}{2}} \right)^{2l}}\frac{{\Gamma ( {j + l} )}}{{\Gamma ( {j - l} )}}{\delta _{jj'}}.\;\;\;\;\;\;\;\;\;\;\;\;\;\;\;\;\;\;\;\;\;\;\;\;\;\;\;\;\;\;\;\;\;\;\;
\end{aligned}
\end{align}
\end{lemma}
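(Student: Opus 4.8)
The plan is to reduce both identities to the classical Jacobi orthogonality \eqref{Jacobi-ortho} for the polynomials $P_n^{(k,k)}$ (respectively $P_n^{(l,l)}$), after writing the high-order fractional derivatives of the basis functions as short linear combinations of such polynomials. The point of departure is \eqref{basictest-dxdt}, which already expresses the $\sigma$-th (resp. $s$-th) Riemann--Liouville derivative of $\varphi_i$ (resp. $\psi_j$) in terms of the Legendre polynomials $L_n=P_n^{(0,0)}$, so that only an \emph{integer}-order differentiation remains to be taken.

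For $\hat a_{ii'}^{(\sigma,k)}$ I would first record the commutation identity ${}_{-1}\partial_x^{\sigma+k}=\frac{\mathrm{d}^k}{\mathrm{d}x^k}\,{}_{-1}\partial_x^{\sigma}$ for integer $k$: since $\sigma\in(\tfrac12,1)$, both sides reduce to $\frac{\mathrm{d}^{k+1}}{\mathrm{d}x^{k+1}}\,{}_{-1}I_x^{1-\sigma}$, because adjoining the integer $k$ to the order merely raises the number of ordinary differentiations. Applying $\frac{\mathrm{d}^k}{\mathrm{d}x^k}$ to \eqref{basictest-dxdt} and using \eqref{Jacobi-dk} with $\tilde\alpha=\tilde\beta=0$, i.e. $\frac{\mathrm{d}^k}{\mathrm{d}x^k}L_n=\frac{\Gamma(n+k+1)}{2^k\Gamma(n+1)}P_{n-k}^{(k,k)}$ for $n\ge k$, gives the two-term expansion
\[
{}_{-1}\partial_x^{\sigma+k}\varphi_i=\frac{\Gamma(i+k)}{2^{k}\Gamma(i)}\,P_{i-1-k}^{(k,k)}-\frac{i+\sigma}{i-\sigma}\,\frac{\Gamma(i+k+1)}{2^{k}\Gamma(i+1)}\,P_{i-k}^{(k,k)} .
\]
I would then substitute this (and the analogue for $\varphi_{i'}$) into the weighted inner product $(\cdot,\cdot)_{\omega^{(k,k)},\Lambda}$ and invoke \eqref{Jacobi-ortho}. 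Orthogonality annihilates every product except when the two Jacobi indices coincide, so the only surviving interactions pair $P_{i-1-k}^{(k,k)}$ with $P_{i'-1-k}^{(k,k)}$ and $P_{i-k}^{(k,k)}$ with $P_{i'-k}^{(k,k)}$ (forcing $i=i'$), together with the two cross pairings forcing $i=i'\pm1$; this is precisely why the matrix is tridiagonal. Inserting the normalizations $\bar\gamma_{i-1-k}^{(k,k)},\bar\gamma_{i-k}^{(k,k)}$ and collapsing the standard Gamma-function ratios (e.g. $\Gamma(i+k+1)/(i+k)=\Gamma(i+k)$ and $1/(i-k)!=1/\Gamma(i-k+1)$) yields the three stated values, and the $i=i'+1$ case follows from $i=i'-1$ by symmetry of the inner product.

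The computation of $\hat b_{jj'}^{(s,l)}$ is identical in spirit. From \eqref{basictest-dxdt} one has ${}_0\partial_t^s\psi_j=(T/2)^{-1/2}(j-\tfrac12)^{1/2}L_{j-1}(\tfrac{2t}{T}-1)$; differentiating $l$ further times (using the time analogue ${}_0\partial_t^{s+l}=\frac{\mathrm{d}^l}{\mathrm{d}t^l}\,{}_0\partial_t^{s}$), the chain rule contributes a factor $(2/T)^l$ and \eqref{Jacobi-dk} converts $\frac{\mathrm{d}^l}{\mathrm{d}\tau^l}L_{j-1}$ into $\frac{\Gamma(j+l)}{2^l\Gamma(j)}P_{j-1-l}^{(l,l)}(\tau)$ with $\tau=\tfrac{2t}{T}-1$. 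Because this is now a \emph{single} Jacobi polynomial, \eqref{Jacobi-ortho} immediately forces $j=j'$ and produces the diagonal result carrying the single constant $\bar\gamma_{j-1-l}^{(l,l)}$; the accompanying power of $T/2$ is assembled from the prefactors in \eqref{basictest-dxdt}, the chain-rule factor and the Jacobian of the affine map $t\mapsto\tau$ pulling the integral back to $\Lambda$, after which matching to the stated closed form is direct.

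I expect the main obstacle to be purely the bookkeeping: tracking the powers of $T/2$ produced by the time rescaling together with the cascade of Gamma-function quotients, and checking that the surviving terms collapse to exactly the claimed expressions. The only genuinely structural ingredient, isolated above, is the commutation identity ${}_{-1}\partial_x^{\sigma+k}=\frac{\mathrm{d}^k}{\mathrm{d}x^k}\,{}_{-1}\partial_x^{\sigma}$ (and its time counterpart), which is what legitimizes applying the elementary differentiation formula \eqref{Jacobi-dk} to the fractional derivatives in \eqref{basictest-dxdt} and thereby keeps the resulting Jacobi parameters nonnegative; everything else is a routine, if lengthy, simplification.
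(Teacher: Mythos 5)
Your route is exactly the paper's: the paper offers no written proof beyond the phrase ``By \eqref{basictest-dxdt} and \eqref{Jacobi-dk}, \dots a direct calculation'', and your reconstruction --- commute the integer derivative past the fractional one (your identity ${}_{-1}\partial_x^{\sigma+k}=\frac{\mathrm{d}^k}{\mathrm{d}x^k}\,{}_{-1}\partial_x^{\sigma}$ is correct for Riemann--Liouville derivatives, since both sides equal $\frac{\mathrm{d}^{k+1}}{\mathrm{d}x^{k+1}}\,{}_{-1}I_x^{1-\sigma}$), apply \eqref{Jacobi-dk} to the Legendre expansions in \eqref{basictest-dxdt}, then invoke the Jacobi orthogonality \eqref{Jacobi-ortho} with weight $\omega^{(k,k)}$ (resp.\ $\omega^{(l,l)}$) --- is precisely that calculation. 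Your spatial computation is sound: the two-term expansion of ${}_{-1}\partial_x^{\sigma+k}\varphi_i$, the tridiagonal structure forced by orthogonality, and the collapse of the Gamma ratios do reproduce the stated values of $\hat a_{ii'}^{(\sigma,k)}$.

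The gap is in the time part, at exactly the step you declared ``direct''. Carry the $T/2$ bookkeeping through: the two prefactors from \eqref{basictest-dxdt} contribute $(T/2)^{-1}$, the two chain-rule factors contribute $(2/T)^{2l}$, pulling the weight $\bar\omega^{(l,l)}(t)=t^l(T-t)^l$ back to $\Lambda$ contributes $(T/2)^{2l}$, and the Jacobian $\mathrm{d}t=(T/2)\,\mathrm{d}\tau$ contributes $(T/2)$; the exponents sum to $-1-2l+2l+1=0$. Hence your own method yields
\begin{equation*}
\hat b_{jj'}^{(s,l)}=\Bigl(j-\tfrac12\Bigr)\frac{\Gamma^2(j+l)}{2^{2l}\,\Gamma^2(j)}\;\bar{\gamma}_{j-1-l}^{(l,l)}\,\delta_{jj'}=\frac{\Gamma(j+l)}{\Gamma(j-l)}\,\delta_{jj'},
\end{equation*}
with \emph{no} factor $(T/2)^{2l}$, contradicting the closed form you claim to match. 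A scaling argument confirms this: ${}_0\partial_t^{s+l}\psi_j$ scales like $(T/2)^{-l-1/2}$, the weight like $(T/2)^{2l}$ and $\mathrm{d}t$ like $T/2$, so $\hat b_{jj'}^{(s,l)}$ is necessarily independent of $T$. The $(T/2)^{2l}$ in the statement --- like the $(T/2)^{2(s-r)}$ in the paper's preceding lemma for $\bar b_{jj'}^{(r)}$ --- is a spurious factor, so a correct write-up must either derive the $T$-free value or explicitly flag and correct the stated formula; asserting that the powers of $T/2$ ``assemble'' to the printed expression is a step that fails. (Since these factors are constants absorbed into $\lesssim$, the downstream error estimates are unaffected, but as a proof of the lemma as written your final matching step does not go through.)
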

Thanks to these lemmas above, it is of great importance to show the following lemmas.
\begin{lemma}
\label{op-op-lemma}
Suppose two fractional-order parameters $r,\rho$, which may present $0$, $\frac{\alpha}{2}$ or $\frac{\gamma}{2} $, $0$, $\frac{\beta}{2} $ or $\frac{\mu}{2}$, satisfying $0 \leqslant r \leqslant s $, $0 \leqslant \rho \leqslant \sigma $, we have
\begin{align}
\label{op-op-eq}
\begin{aligned}
& {\left\| {_0\partial _t^r{_{ - 1}}\partial _x^\rho ( {u - \Pi _L^{\sigma,s}u} )} \right\|_{{\omega ^{( { - \sigma  + \rho , - \sigma  + \rho } )}},{{\bar \omega }^{( { - s + r, - s + r} )}},Q_T}} \\
& \;\;\;\;\;\;\;\; \lesssim {{M^{\rho  - \sigma }}{\left\| {_0\partial _t^r{_{ - 1}}\partial _x^\sigma ( {u - \Pi _L^{\sigma,s}u} )} \right\|_{{{\bar \omega }^{( { - s + r, - s + r} )}},Q_T}} + {N^{r - s}}} {\left\| {_0\partial _t^s{_{ - 1}}\partial _x^\rho ( {u - \Pi _L^{\sigma,s}u} )} \right\|_{{\omega ^{( { - \sigma  + \rho , - \sigma  + \rho } )}},Q_T}} \\
& \;\;\;\;\;\;\;\;\;\;\;\;\;\;\;\;  + {M^{\rho  - \sigma }}{N^{r - s}} {\left\| {_0\partial _t^s{_{ - 1}}\partial _x^\sigma ( {u - \Pi _L^{\sigma,s}u} )} \right\|_{Q_T}}.
\end{aligned}
\end{align}
\end{lemma}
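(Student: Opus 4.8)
The plan is to split the two-dimensional weighted estimate into two one-dimensional inverse-type inequalities and recombine them over the tensor-product space $V_L(Q_T)=X_M(\Lambda)\otimes Y_N(I)$. Since $\Pi_L^{\sigma,s}$ is defined through this tensor structure (Definition \ref{op}), I would factor it as a product $P_x P_t$ of one-dimensional projections acting in the $x$- and $t$-variables respectively, and use the operator identity $I-P_xP_t=(I-P_x)+(I-P_t)-(I-P_x)(I-P_t)$. Applying a triangle inequality to $e:=u-\Pi_L^{\sigma,s}u$ decomposed this way already foreshadows the three right-hand terms: the purely-$x$ piece $(I-P_x)u$ will produce the factor $M^{\rho-\sigma}$, the purely-$t$ piece $(I-P_t)u$ the factor $N^{r-s}$, and the mixed piece the product $M^{\rho-\sigma}N^{r-s}$.

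For each piece I would expand in the GJF basis $\{\varphi_i(x)\psi_j(t)\}$, use Lemma \ref{FDJF} to rewrite ${_{-1}}\partial_x^\rho\varphi_i$ and ${_0}\partial_t^r\psi_j$ as shifted GJFs, and invoke the Gram matrices computed above — the tridiagonal $\bar a_{ii'}^{(\rho)}$ and the diagonal $\bar b_{jj'}^{(r)}=(T/2)^{2(s-r)}\frac{\Gamma(j-s+r)}{\Gamma(j+s-r)}\delta_{jj'}$ — to turn each weighted $L^2(Q_T)$-norm into a double sum in the coefficients. The time direction is then immediate: by \eqref{basictest-dxdt} the map ${_0}\partial_t^s$ sends $Y_N$ onto Legendre polynomials with $\{{_0}\partial_t^s\psi_j\}$ orthonormal, so truncation is clean and the ratio of the $(r)$-weight to the $(s)$-weight on the $j$-th mode is $(T/2)^{2(s-r)}\frac{\Gamma(j-s+r)}{\Gamma(j+s-r)}\cong j^{2(r-s)}$, which for the surviving modes $j\geqslant N$ is dominated by $cN^{2(r-s)}$ because $r-s\leqslant 0$. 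This establishes the $t$-direction inverse inequality.

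The $x$-direction is the delicate step and I expect it to be the main obstacle. Here $\{{_{-1}}\partial_x^\sigma\varphi_i\}=\{L_{i-1}-\frac{i+\sigma}{i-\sigma}L_i\}$ is only tridiagonally orthogonal, so $\bar a^{(\rho)}$ and $\bar a^{(\sigma)}$ are non-diagonal and not simultaneously diagonal. I would instead prove a uniform comparison of quadratic forms, $\sum_{i,i'}\bar a_{ii'}^{(\rho)}c_ic_{i'}\lesssim M^{2(\rho-\sigma)}\sum_{i,i'}\bar a_{ii'}^{(\sigma)}c_ic_{i'}$ for all coefficient vectors supported on the high modes $i\geqslant M$, by factoring out the common diagonal weight $\frac{\Gamma(i-\sigma+\rho+1)}{\Gamma(i+\sigma-\rho+1)}\cong i^{2(\rho-\sigma)}$ from the explicit entries, using its monotonicity to bound it by $cM^{2(\rho-\sigma)}$ on $i\geqslant M$, and controlling the remaining tridiagonal coupling by a Gershgorin/Schur-type argument so the off-diagonal entries do not spoil the estimate. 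A closely related subtlety, which must be settled alongside this, is to confirm that each error piece genuinely carries no low-mode content in the derivative-image basis — that is, to reconcile the $L^2$-orthogonality defining $\Pi_L^{\sigma,s}$ with the derivative-matching structure that makes truncation at mode $M$ (respectively $N$) clean; this is exactly where Definition \ref{op} and the fact that ${_{-1}}\partial_x^\sigma$ maps $X_M$ into low-degree polynomials enter.

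Finally I would recombine. Feeding the three pieces of $e$ into the two one-dimensional estimates yields, respectively, $M^{\rho-\sigma}\,\|{_0}\partial_t^r{_{-1}}\partial_x^\sigma(u-\Pi_L^{\sigma,s}u)\|_{\bar\omega^{(-s+r,-s+r)},Q_T}$, $N^{r-s}\,\|{_0}\partial_t^s{_{-1}}\partial_x^\rho(u-\Pi_L^{\sigma,s}u)\|_{\omega^{(-\sigma+\rho,-\sigma+\rho)},Q_T}$, and $M^{\rho-\sigma}N^{r-s}\,\|{_0}\partial_t^s{_{-1}}\partial_x^\sigma(u-\Pi_L^{\sigma,s}u)\|_{Q_T}$, where passing from the norms of the isolated pieces to norms of the full error uses the $L^2$-stability and commutativity of $P_x$ and $P_t$ together with the Fubini/tensor factorization of the weight $\omega^{(\cdot,\cdot)}\bar\omega^{(\cdot,\cdot)}$. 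Summing the three contributions gives \eqref{op-op-eq}.
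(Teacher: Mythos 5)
Your proposal follows essentially the same route as the paper: your operator identity $I-P_xP_t=(I-P_x)+(I-P_t)-(I-P_x)(I-P_t)$ applied to the coefficient expansion is exactly the paper's splitting of $u-\Pi_L^{\sigma,s}u$ into the three index-range tails ($i\geqslant M$ with $j\leqslant N$, $i\leqslant M-1$ with $j\geqslant N+1$, and $i\geqslant M$ with $j\geqslant N+1$), after which the paper likewise converts each weighted norm into coefficient sums via the tridiagonal Gram matrix $\bar a^{(\rho)}_{ii'}$ and the diagonal $\bar b^{(r)}_{jj'}$, and extracts the factors $M^{\rho-\sigma}$ and $N^{r-s}$ from the monotone Gamma-function ratios, just as you describe. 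The only differences are cosmetic: in the space direction the paper absorbs the off-diagonal coupling into the coefficient-dependent quantity $\bar d_{ij}^{(\rho)}$ and uses $\bar d_{ij}^{(\rho)}\leqslant\frac{i+\sigma-\rho}{i-\sigma+\rho}\bar d_{ij}^{(\sigma)}$ instead of your Gershgorin-type quadratic-form comparison, and it asserts without further argument the identification of $\Pi_L^{\sigma,s}u$ with coefficient truncation that you correctly flag as the point needing reconciliation.
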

\begin{proof}
As the definition of orthogonal projector in Definition \ref{op}, we can get the consequence
\[\Pi _L^{\sigma,s}u = \sum\limits_{i = 1}^{M - 1} {\sum\limits_{j = 1}^N {{u_{ij}}{\varphi _i}( x ){\psi _j}( t )} } .\]
Compared with the exact solution \eqref{ES}, it indicates
\begin{align}
\begin{aligned}
 _0\partial _t^r {_{ - 1}}& \partial _x^\rho ( {u - \Pi _L^{\sigma,s}u} ) = \sum\limits_{i = 1}^{M - 1} {\sum\limits_{j = N + 1}^{ + \infty } {{u_{ij}}[ {_{ - 1}\partial_x^\rho {\varphi _i}( x )} ] \cdot [ {_0\partial_t^r{\psi _j}( t )} ]} } \\
& + \sum\limits_{i = M}^{ + \infty } {\sum\limits_{j = 1}^N {{u_{ij}}[ {_{ - 1}\partial_x^\rho {\varphi _i}( x )} ] \cdot [ {_0\partial_t^r{\psi _j}( t )} ]} } + \sum\limits_{i = M}^{ + \infty } {\sum\limits_{j = N + 1}^{ + \infty } {{u_{ij}}[ {_{ - 1}\partial_x^\rho {\varphi _i}( x )} ] \cdot [ {_0\partial_t^r{\psi _j}( t )} ]} } .
\end{aligned}
\end{align}
Remove terms $u_{ij} = 0$, we get
\begin{align}
\label{orginterm}
& \begin{aligned}
& \left\| {_0\partial _t^r{_{ - 1}}\partial _x^\rho ( {u - \Pi _L^{\sigma,s}u} )} \right\|_{\omega ^{( { - \sigma  + \rho , - \sigma  + \rho } )},{{\bar \omega }^{( { - s + r, - s + r} )}},Q_T}^2\\
& \;\;\;\;\;\;\;\; = \sum\limits_{j = 1}^N {\bar b_{jj}^{( r )}\sum\limits_{ i = M\atop
{u_{ij}} \ne 0}^{ + \infty } {{{\left| {{u_{ij}}} \right|}^2}( {\bar a_{ii}^{( \rho  )} + 2\bar a_{i,i + 1}^{( \rho  )} \cdot \frac{{{u_{i + 1,j}}}}{{{u_{ij}}}}} )} }  + \sum\limits_{j = N + 1}^{ + \infty } {\bar b_{jj}^{( r )}\sum\limits_{ i = 1\atop
{u_{ij}} \ne 0}^{M - 1} {{{\left| {{u_{ij}}} \right|}^2}( {\bar a_{ii}^{( \rho  )} + 2\bar a_{i,i + 1}^{( \rho  )} \cdot \frac{{{u_{i + 1,j}}}}{{{u_{ij}}}}} )} } \\
& \;\;\;\;\;\;\; \;\;\;\;\;\;\; \;\;\;\;\;\;\; + \sum\limits_{j = N}^{ + \infty } {\bar b_{jj}^{( r )}\sum\limits_{ i = M\atop
{u_{ij}} \ne 0}^{ + \infty } {{{\left| {{u_{ij}}} \right|}^2}( {\bar a_{ii}^{( \rho  )} + 2\bar a_{i,i + 1}^{( \rho  )} \cdot \frac{{{u_{i + 1,j}}}}{{{u_{ij}}}}} )} }, 
\end{aligned}\\
\label{firstterm}
& \begin{aligned}
& \left\| {_0\partial _t^s{_{ - 1}}\partial _x^\rho ( {u - \Pi _L^{\sigma,s}u} )} \right\|_{\omega ^{( { - \sigma  + \rho , - \sigma  + \rho } )}, Q_T}^2\\
& \;\;\;\;\;\;\;\; = \sum\limits_{j = 1}^N {\sum\limits_{ i = M\atop
{u_{ij}} \ne 0}^{ + \infty } {{{\left| {{u_{ij}}} \right|}^2}( {\bar a_{ii}^{( \rho  )} + 2\bar a_{i,i + 1}^{( \rho  )} \cdot \frac{{{u_{i + 1,j}}}}{{{u_{ij}}}}} )} }  + \sum\limits_{j = N + 1}^{ + \infty } {\sum\limits_{ i = 1\atop
{u_{ij}} \ne 0}^{M - 1} {{{\left| {{u_{ij}}} \right|}^2}( {\bar a_{ii}^{( \rho  )} + 2\bar a_{i,i + 1}^{( \rho  )} \cdot \frac{{{u_{i + 1,j}}}}{{{u_{ij}}}}} )} } \\
& \;\;\;\;\;\;\; \;\;\;\;\;\;\; \;\;\;\;\;\;\; + \sum\limits_{j = N}^{ + \infty } {\sum\limits_{ i = M\atop
{u_{ij}} \ne 0}^{ + \infty } {{{\left| {{u_{ij}}} \right|}^2}( {\bar a_{ii}^{( \rho  )} + 2\bar a_{i,i + 1}^{( \rho  )} \cdot \frac{{{u_{i + 1,j}}}}{{{u_{ij}}}}} )} }, 
\end{aligned}\\
\label{secondterm}
& \begin{aligned}
& \left\| {_0\partial _t^r{_{ - 1}}\partial _x^\sigma ( {u - \Pi _L^{\sigma,s}u} )} \right\|_{{\bar \omega }^{( { - s + r, - s + r} )}, Q_T}^2\\
& \;\;\;\;\;\;\;\; = \sum\limits_{j = 1}^N \bar b_{jj}^{( r )}{\sum\limits_{ i = M\atop
{u_{ij}} \ne 0}^{ + \infty } {{{\left| {{u_{ij}}} \right|}^2}( {\bar a_{ii}^{( \sigma  )} + 2\bar a_{i,i + 1}^{( \sigma  )} \cdot \frac{{{u_{i + 1,j}}}}{{{u_{ij}}}}} )} }  + \sum\limits_{j = N + 1}^{ + \infty } \bar b_{jj}^{( r )}{\sum\limits_{ i = 1\atop
{u_{ij}} \ne 0}^{M - 1} {{{\left| {{u_{ij}}} \right|}^2}( {\bar a_{ii}^{( \sigma  )} + 2\bar a_{i,i + 1}^{( \sigma  )} \cdot \frac{{{u_{i + 1,j}}}}{{{u_{ij}}}}} )} } \\
& \;\;\;\;\;\;\; \;\;\;\;\;\;\; \;\;\;\;\;\;\; + \sum\limits_{j = N}^{ + \infty }\bar b_{jj}^{( r )} {\sum\limits_{ i = M\atop
{u_{ij}} \ne 0}^{ + \infty } {{{\left| {{u_{ij}}} \right|}^2}( {\bar a_{ii}^{( \sigma  )} + 2\bar a_{i,i + 1}^{( \sigma  )} \cdot \frac{{{u_{i + 1,j}}}}{{{u_{ij}}}}} )} },
\end{aligned}\\
\label{thirdterm}
& \begin{aligned}
& \left\| {_0\partial _t^s{_{ - 1}}\partial _x^\sigma ( {u - \Pi _L^{\sigma,s}u} )} \right\|_{Q_T}^2\\
& \;\;\;\;\;\;\;\; = \sum\limits_{j = 1}^N {\sum\limits_{ i = M\atop
{u_{ij}} \ne 0}^{ + \infty } {{{\left| {{u_{ij}}} \right|}^2}( {\bar a_{ii}^{( \sigma  )} + 2\bar a_{i,i + 1}^{( \sigma  )} \cdot \frac{{{u_{i + 1,j}}}}{{{u_{ij}}}}} )} }  + \sum\limits_{j = N + 1}^{ + \infty } {\sum\limits_{ i = 1\atop{u_{ij}} \ne 0}^{M - 1} {{{\left| {{u_{ij}}} \right|}^2}( {\bar a_{ii}^{( \sigma  )} + 2\bar a_{i,i + 1}^{( \sigma  )} \cdot \frac{{{u_{i + 1,j}}}}{{{u_{ij}}}}} )} } \\
& \;\;\;\;\;\;\; \;\;\;\;\;\;\; \;\;\;\;\;\;\; + \sum\limits_{j = N}^{ + \infty } {\sum\limits_{ i = M\atop
{u_{ij}} \ne 0}^{ + \infty } {{{\left| {{u_{ij}}} \right|}^2}( {\bar a_{ii}^{( \sigma  )} + 2\bar a_{i,i + 1}^{( \sigma  )} \cdot \frac{{{u_{i + 1,j}}}}{{{u_{ij}}}}} )} }.
\end{aligned}
\end{align}
Denote the term
\begin{align*}
\bar a_{ii}^{( \rho  )} + 2\bar a_{i,i + 1}^{( \rho  )} \cdot \frac{{{u_{i + 1,j}}}}{{{u_{ij}}}} = & \frac{{\Gamma ( {i - \sigma  + \rho  + 1} )}}{{\Gamma ( {i + \sigma  - \rho  + 1} )}}\left[ {\frac{2}{{2i - 1}} \cdot \frac{{i + \sigma  - \rho }}{{i - \sigma  + \rho }} + \frac{2}{{2i + 1}} \cdot \frac{{i + \sigma }}{{i - \sigma }}\left( {\frac{{i + \sigma }}{{i - \sigma }} - 2\frac{u_{i + 1,j}}{u_{ij}}} \right)} \right]\\
: = & \frac{{\Gamma ( {i - \sigma  + \rho  + 1} )}}{{\Gamma ( {i + \sigma  - \rho  + 1} )}} \cdot \bar d_{ij}^{( \rho  )}, 
\end{align*}
where
$${\bar{d}}_{ij}^{( \rho  )} \leqslant \frac{i+\sigma-\rho}{i-\sigma+\rho} {\bar{d}}_{ij}^{( \sigma  )}. $$
Otherwise, we separately compare the three terms in the right hand of \eqref{orginterm} with \eqref{firstterm} -- \eqref{thirdterm}, 
\begin{align*}
\begin{aligned}
&  \left\| {_0\partial _t^r{_{ - 1}}\partial _x^\rho ( {u - \Pi _L^{\sigma,s}u} )} \right\|_{{\omega ^{( { - \sigma  + \rho , - \sigma  + \rho } )}},{{\bar \omega }^{( { - s + r, - s + r} )}},Q_T}^2 \\
& \;\;\;\;\;\;\;\;= {\left( \frac{T}{2} \right)^{2( {s - r} )}}\sum\limits_{j = 1}^N {\frac{{\Gamma ( {j - s + r} )}}{{\Gamma ( {j + s - r} )}}\sum\limits_{\scriptstyle i = M\atop
\scriptstyle{u_{ij}} \ne 0}^{ + \infty } {\frac{{\Gamma ( {i - \sigma  + \rho  + 1} )}}{{\Gamma ( {i + \sigma  - \rho  + 1} )}} {\left| {{u_{ij}}} \right|}^2} \bar d_{ij}^{( \rho  )} } \\
& \;\;\;\;\;\;\;\;\;\;\;\;\;\; + {\left( \frac{T}{2} \right)^{2( {s - r} )}}\sum\limits_{j = N + 1}^{ + \infty } {\frac{{\Gamma ( {j - s + r} )}}{{\Gamma ( {j + s - r} )}}\sum\limits_{\scriptstyle i = 1\atop
\scriptstyle{u_{ij}} \ne 0}^{M - 1} {\frac{{\Gamma ( {i - \sigma  + \rho  + 1} )}}{{\Gamma ( {i + \sigma  - \rho  + 1} )}} {{\left| {{u_{ij}}} \right|}^2}\bar d_{ij}^{( \rho  )}} } \\
& \;\;\;\;\;\;\;\;\;\;\;\;\;\; + {\left( \frac{T}{2} \right)^{2( {s - r} )}}\sum\limits_{j = N + 1}^{ + \infty } {\frac{{\Gamma ( {j - s + r} )}}{{\Gamma ( {j + s - r} )}}\sum\limits_{\scriptstyle i = M\atop
\scriptstyle{u_{ij}} \ne 0}^{ + \infty } {\frac{{\Gamma ( {i - \sigma  + \rho  + 1} )}}{{\Gamma ( {i + \sigma  - \rho  + 1} )}} {{\left| {{u_{ij}}} \right|}^2} \bar d_{ij}^{( \rho  )}} } \\
& \;\;\;\;\;\;\;\;\lesssim {\left( \frac{T}{2} \right)^{2( {s - r} )}}\frac{{\Gamma ( {1 - s + r} )}}{{\Gamma ( {1 + s - r} )}}\frac{{\Gamma ( {M - \sigma  + \rho  + 1} )}}{{\Gamma ( {M + \sigma  - \rho  + 1} )}}\sum\limits_{j = 1}^N {\sum\limits_{\scriptstyle i = M\atop
\scriptstyle{u_{ij}} \ne 0}^{ + \infty } {{{\left| {{u_{ij}}} \right|}^2}\bar d_{ij}^{( \sigma  )}} } \\
& \;\;\;\;\;\;\;\;\;\;\;\;\;\; + {\left( \frac{T}{2} \right)^{2( {s - r} )}}\frac{{\Gamma ( {N + 1 - s + r} )}}{{\Gamma ( {N + 1 + s - r} )}}\frac{{\Gamma ( {2 - \sigma  + \rho } )}}{{\Gamma ( {2 + \sigma  - \rho } )}}\sum\limits_{j = N + 1}^{ + \infty } {\sum\limits_{\scriptstyle i = 1\atop
\scriptstyle{u_{ij}} \ne 0}^{M - 1} {{{\left| {{u_{ij}}} \right|}^2}\bar d_{ij}^{( \rho  )}} } \\
& \;\;\;\;\;\;\;\;\;\;\;\;\;\; + {\left( \frac{T}{2} \right)^{2( {s - r} )}}\frac{{\Gamma ( {N + 1 - s + r} )}}{{\Gamma ( {N + 1 + s - r} )}}\frac{{\Gamma ( {M - \sigma  + \rho  + 1} )}}{{\Gamma ( {M + \sigma  - \rho  + 1} )}}\sum\limits_{j = N + 1}^{ + \infty } {\sum\limits_{\scriptstyle i = M\atop
\scriptstyle{u_{ij}} \ne 0}^{ + \infty } {{{\left| {{u_{ij}}} \right|}^2}\bar d_{ij}^{( \sigma  )}} }\\
& \;\;\;\;\;\;\;\;\lesssim {M^{2\rho  - 2\sigma } }\left\| {_0\partial _t^r{_{ - 1}}\partial _x^\sigma ( {u - \Pi _L^{\sigma,s}u} )} \right\|_{{{\bar \omega }^{( { - s + r, - s + r} )}}, Q_T}^2 + {N^{2r - 2s }}\left\| {_0\partial _t^s{_{ - 1}}\partial _x^\rho ( {u - \Pi _L^{\sigma,s}u} )} \right\|_{{\omega ^{( { - \sigma  + \rho , - \sigma  + \rho } )}},Q_T}^2 \\
& \;\;\;\;\;\;\;\;\;\;\;\;\;\; + {M^{2\rho  - 2\sigma }}{N^{2r - 2s}}\left\| {_0\partial _t^s{_{ - 1}}\partial _x^\sigma ( {u - \Pi _L^{\sigma,s}u} )} \right\|_{Q_T}^2,
\end{aligned}
\end{align*}
which leads to \eqref{op-op-eq}.
\end{proof}
\begin{corollary}
Use the result of Lemma \ref{op-op-lemma} for twice, we get the following result:
\begin{align}
{\left\| {_0\partial _t^r{_{ - 1}}\partial _x^\rho ( {u - \Pi _L^{\sigma,s}u} )} \right\|_{{\omega ^{( { - \sigma  + \rho , - \sigma  + \rho } )}},{{\bar \omega }^{( { - s + r, - s + r} )}},Q_T}}  \lesssim  {M^{\rho  - \sigma }}{N^{r - s}} {\left\| {_0\partial _t^s{_{ - 1}}\partial _x^\sigma ( {u - \Pi _L^{\sigma,s}u} )} \right\|_{Q_T}}.
\end{align}
\end{corollary}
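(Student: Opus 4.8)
The plan is to derive the single-term estimate by invoking Lemma~\ref{op-op-lemma} twice, once to raise the spatial order $\rho$ to its top value $\sigma$ and once to raise the temporal order $r$ to $s$. Throughout, write $\mathcal{A}_{\rho,r}:=\|{}_0\partial_t^r{}_{-1}\partial_x^\rho(u-\Pi_L^{\sigma,s}u)\|_{\omega^{(-\sigma+\rho,-\sigma+\rho)},\bar\omega^{(-s+r,-s+r)},Q_T}$, so that the corollary asserts $\mathcal{A}_{\rho,r}\lesssim M^{\rho-\sigma}N^{r-s}\mathcal{A}_{\sigma,s}$, where the right-hand norm is unweighted because both weights collapse to $\mathbf{1}$ once $\rho=\sigma$ and $r=s$. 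In this notation Lemma~\ref{op-op-lemma} reads $\mathcal{A}_{\rho,r}\lesssim M^{\rho-\sigma}\mathcal{A}_{\sigma,r}+N^{r-s}\mathcal{A}_{\rho,s}+M^{\rho-\sigma}N^{r-s}\mathcal{A}_{\sigma,s}$, in which the last summand is already the target and only the two intermediate mixed norms $\mathcal{A}_{\sigma,r}$ and $\mathcal{A}_{\rho,s}$ remain to be controlled.

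First I would apply Lemma~\ref{op-op-lemma} a second time to each intermediate norm. For $\mathcal{A}_{\sigma,r}$ the spatial order already sits at the top value $\sigma$, so the intended outcome is that only the temporal reduction survives, $\mathcal{A}_{\sigma,r}\lesssim N^{r-s}\mathcal{A}_{\sigma,s}$; symmetrically $\mathcal{A}_{\rho,s}\lesssim M^{\rho-\sigma}\mathcal{A}_{\sigma,s}$. Substituting both back and using $M^{\rho-\sigma}\le 1$, $N^{r-s}\le 1$ (valid since $\rho\le\sigma$ and $r\le s$) collapses all three summands into a constant multiple of $M^{\rho-\sigma}N^{r-s}\mathcal{A}_{\sigma,s}$, which is the assertion. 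The quantitative inputs are exactly the two Gamma-ratio asymptotics already used inside the proof of Lemma~\ref{op-op-lemma}, namely $\frac{\Gamma(i-\sigma+\rho+1)}{\Gamma(i+\sigma-\rho+1)}\cong i^{2(\rho-\sigma)}$ and $\frac{\Gamma(j-s+r)}{\Gamma(j+s-r)}\cong j^{2(r-s)}$, evaluated on the tail indices $i\ge M$, $j\ge N+1$ on which $u-\Pi_L^{\sigma,s}u$ is supported, together with the comparison $\bar d_{ij}^{(\rho)}\le\frac{i+\sigma-\rho}{i-\sigma+\rho}\,\bar d_{ij}^{(\sigma)}$ whose prefactor is uniformly bounded in $i$ and is absorbed into $\lesssim$.

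The step I expect to be the main obstacle is precisely this second, single-direction reduction, because of a degeneracy that appears when one order is already maximal: putting $\rho=\sigma$ in Lemma~\ref{op-op-lemma} produces a leading term $M^{0}\mathcal{A}_{\sigma,r}=\mathcal{A}_{\sigma,r}$ identical to the left-hand side, so one cannot close the bound by blindly re-quoting the inequality and absorbing this term. To obtain $\mathcal{A}_{\sigma,r}\lesssim N^{r-s}\mathcal{A}_{\sigma,s}$ I would instead return to the coefficient-level computation and perform only the temporal estimate: split the tail into its three index blocks (high space with low time, low space with high time, high time with high space), note that the spatial diagonal $\bar a_{ii}^{(\sigma)}$ and its off-diagonal corrections are untouched since the spatial order is already saturated, and apply the temporal bound $\frac{\Gamma(j-s+r)}{\Gamma(j+s-r)}\lesssim N^{2(r-s)}$ on the blocks with $j\ge N+1$. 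The genuinely delicate point, which the phrase ``apply the lemma twice'' must carry, is that the low-time/high-space block of $\mathcal{A}_{\sigma,r}$ (and symmetrically the low-space/high-time block of $\mathcal{A}_{\rho,s}$) has no spare temporal index to decay in, so the extra factor must be supplied through the cross block rather than read off term by term; establishing that the dominant contribution is indeed the fully high-indexed block, and that the mixed blocks are subordinate to it, is where the careful summation is needed, and it is the part I would write out in full with the suppressed constants verified to be independent of $M$, $N$ and $u$.
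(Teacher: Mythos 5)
Your diagnosis of the degeneracy is correct, and it is the honest reading of the paper's one-line justification: specializing Lemma~\ref{op-op-lemma} to $\rho=\sigma$ (or to $r=s$) puts the term $M^{0}\mathcal{A}_{\sigma,r}$ (resp.\ $N^{0}\mathcal{A}_{\rho,s}$, in your notation) back on the right-hand side, so the inequality returns its own left-hand side and nothing can be absorbed. Since the paper offers no argument beyond the phrase ``use the lemma twice,'' the burden you take on---proving the single-direction reductions $\mathcal{A}_{\sigma,r}\lesssim N^{r-s}\mathcal{A}_{\sigma,s}$ and $\mathcal{A}_{\rho,s}\lesssim M^{\rho-\sigma}\mathcal{A}_{\sigma,s}$---is exactly what a complete proof would require.

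However, the step you defer to ``careful summation'' is not delicate but false, and with it the corollary itself as a statement uniform in $M$, $N$ and $u$. Take $u=u_{M,1}\varphi_M(x)\psi_1(t)$, a single mode; with the paper's identification of $\Pi_L^{\sigma,s}$ with coefficient truncation (used throughout the proof of Lemma~\ref{op-op-lemma}) one has $u-\Pi_L^{\sigma,s}u=u$, supported entirely on the high-space/low-time block, and the cross block through which you propose to route the missing factor is empty. The paper's own coefficient formulas give
\begin{align*}
\mathcal{A}_{\sigma,r}^2=\bar b^{(r)}_{11}\,\bar a^{(\sigma)}_{MM}\,|u_{M,1}|^2\cong M^{-1}|u_{M,1}|^2,
\qquad
\mathcal{A}_{\sigma,s}^2=\bar a^{(\sigma)}_{MM}\,|u_{M,1}|^2\cong M^{-1}|u_{M,1}|^2,
\end{align*}
because $\bar b^{(r)}_{11}=(T/2)^{2(s-r)}\Gamma(1-s+r)/\Gamma(1+s-r)$ is a positive constant independent of $M$ and $N$; hence $\mathcal{A}_{\sigma,r}\cong\mathcal{A}_{\sigma,s}$, and $\mathcal{A}_{\sigma,r}\lesssim N^{r-s}\mathcal{A}_{\sigma,s}$ fails as $N\to\infty$ whenever $r<s$. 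The same mode refutes the corollary itself: $\mathcal{A}_{\rho,r}\cong M^{\rho-\sigma}\mathcal{A}_{\sigma,s}$, while the claimed bound $M^{\rho-\sigma}N^{r-s}\mathcal{A}_{\sigma,s}$ is smaller by the unbounded factor $N^{s-r}$. What the block-by-block estimates inside Lemma~\ref{op-op-lemma} actually yield is the additive bound
\begin{align*}
\mathcal{A}_{\rho,r}\lesssim\bigl(M^{\rho-\sigma}+N^{r-s}\bigr)\mathcal{A}_{\sigma,s},
\end{align*}
since each mixed block carries one decay factor but not both, and no rearrangement of the summation can upgrade this to the product $M^{\rho-\sigma}N^{r-s}$. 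So the gap is not a missing computation on your side: the dominance claim you would need (``the mixed blocks are subordinate to the fully high-indexed block'') is unprovable, and the corollary can only be salvaged in the weaker additive form, which is also all that the subsequent Lemma~\ref{mainlemma} can legitimately inherit.
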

\begin{lemma}
\label{op-dmn-lemma}
Let $k,l,m,n $ be four non-negative integers, satisfying $0 \leqslant k \leqslant m \leqslant M $, $0 \leqslant l \leqslant n \leqslant N $. We arrive at
\begin{align}
\label{op-dmn-eq}
\begin{aligned}
& {\left\| {_0\partial _t^{s + l}{_{ - 1}}\partial _x^{\sigma  + k}( {u - \Pi _L^{\sigma,s}u} )} \right\|_{{\omega ^{( {k,k} )}},{{\bar \omega }^{( {l,l} )}},Q_T}} \lesssim {M^{k - m}}{\left\| {_0\partial _t^{s + l}{_{ - 1}}\partial _x^{\sigma  + m}u} \right\|_{{\omega ^{( {m,m} )}}, {{\bar \omega }^{( {l,l} )}}, Q_T}} \\
& \;\;\;\;\;\;\;\;\;\;\;\; + {N^{l - n}}{\left\| {_0\partial _t^{s + n}{_{ - 1}}\partial _x^{\sigma  + k}u} \right\|_{{\omega ^{( {k,k} )}}, {{\bar \omega }^{( {n,n} )}},Q_T}}+ {M^{k - m}}{N^{l - n}}{\left\| {_0\partial _t^{s + n}{_{ - 1}}\partial _x^{\sigma  + m}u} \right\|_{{\omega ^{( {m,m} )}},{{\bar \omega }^{( {n,n} )}},Q_T}}.
\end{aligned}
\end{align}
\end{lemma}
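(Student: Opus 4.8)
The plan is to follow the proof of Lemma~\ref{op-op-lemma} almost verbatim, replacing the fractional coefficients $\bar a_{ii'}^{(\rho)},\bar b_{jj'}^{(r)}$ by the integer-shifted quantities $\hat a_{ii'}^{(\sigma,k)},\hat b_{jj'}^{(s,l)}$ computed in the preceding lemma. First I would recall from Definition~\ref{op} and the expansion \eqref{ES} that $\Pi_L^{\sigma,s}u=\sum_{i=1}^{M-1}\sum_{j=1}^N u_{ij}\varphi_i(x)\psi_j(t)$, so that the error $u-\Pi_L^{\sigma,s}u$ consists exactly of the tail modes with $i\geq M$ or $j\geq N+1$. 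Applying $_0\partial_t^{s+l}{_{-1}}\partial_x^{\sigma+k}$ termwise and taking the $(\omega^{(k,k)},\bar\omega^{(l,l)})$-weighted $L^2(Q_T)$ norm, the time factor is diagonal because $\hat b_{jj'}^{(s,l)}=(T/2)^{2l}\tfrac{\Gamma(j+l)}{\Gamma(j-l)}\delta_{jj'}$, while the space factor is tridiagonal through $\hat a_{ii'}^{(\sigma,k)}$. Exactly as in \eqref{orginterm}, this produces a sum over the three tail regions $\{i\geq M,\,j\leq N\}$, $\{i\leq M-1,\,j\geq N+1\}$ and $\{i\geq M,\,j\geq N+1\}$.

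Next I would collapse the space cross-terms the same way the previous proof introduced $\bar d_{ij}^{(\rho)}$, writing
\[
\hat a_{ii}^{(\sigma,k)}+2\hat a_{i,i+1}^{(\sigma,k)}\frac{u_{i+1,j}}{u_{ij}}=\frac{\Gamma(i+k+1)}{\Gamma(i-k+1)}\,\hat d_{ij}^{(k)},
\]
so the three partial sums become weighted sums of $|u_{ij}|^2\hat d_{ij}^{(k)}$ with the Gamma prefactors carrying all the $k$- and $l$-dependence. The key comparisons are then the two monotone Gamma-ratio bounds
\[
\frac{\Gamma(i+k+1)/\Gamma(i-k+1)}{\Gamma(i+m+1)/\Gamma(i-m+1)}\lesssim M^{2(k-m)}\ (i\geq M),\qquad \frac{\Gamma(j+l)/\Gamma(j-l)}{\Gamma(j+n)/\Gamma(j-n)}\lesssim N^{2(l-n)}\ (j\geq N+1),
\]
each of which behaves like $i^{2(k-m)}$ (resp.\ $j^{2(l-n)}$) and, one checks, is decreasing past the cut-off, hence maximized at $i=M$ (resp.\ $j=N+1$); here $k\leq m$ and $l\leq n$ force the nonpositive exponents. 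Region $\{i\geq M,\,j\leq N\}$ then yields the factor $M^{2(k-m)}$ against the space order $\sigma+m$, region $\{i\leq M-1,\,j\geq N+1\}$ yields $N^{2(l-n)}$ against the time order $s+n$, and the doubly-truncated region yields $M^{2(k-m)}N^{2(l-n)}$. After extending each truncated coefficient sum to the full double series (which replaces the error on the right by $u$ itself) and taking square roots, the three terms of \eqref{op-dmn-eq} follow.

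The step I expect to be the genuine obstacle is the comparison of the collapsed space quantities, namely an inequality of the form $\hat d_{ij}^{(k)}\lesssim\frac{(i-k)(i+m)}{(i+k)(i-m)}\,\hat d_{ij}^{(m)}$ that plays the role of $\bar d_{ij}^{(\rho)}\leq\frac{i+\sigma-\rho}{i-\sigma+\rho}\bar d_{ij}^{(\sigma)}$ used earlier. The two quantities share the common $\sigma$-dependent second term $\frac{2}{2i+1}\frac{i+\sigma}{i-\sigma}\big(\frac{i+\sigma}{i-\sigma}-2\frac{u_{i+1,j}}{u_{ij}}\big)$ and differ only in their first term, $\frac{2}{2i-1}\frac{i-k}{i+k}$ versus $\frac{2}{2i-1}\frac{i-m}{i+m}$; since $k\leq m$ makes the former the larger, one factors out the ratio of first-term coefficients, the leftover being that same ratio minus one times the common term, so the inequality reduces to the identical sign consideration handled in the proof of Lemma~\ref{op-op-lemma}. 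The remaining delicate bookkeeping is the passage from truncated to full series: because the shifted derivatives $_{-1}\partial_x^{\sigma+m}\varphi_i$ are $\omega^{(m,m)}$-orthogonal except for neighboring indices, the only coupling between head and tail is the single boundary pair $(M-1,M)$, and controlling this term with the Gram (hence positive-semidefinite) structure of the tridiagonal form is what legitimizes dominating the tail sums by the full norms of $u$; once that is in place, combining the three regional estimates is routine.
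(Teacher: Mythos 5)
Your proposal follows essentially the same route as the paper's proof: expanding $u-\Pi_L^{\sigma,s}u$ into the three tail regions, using the diagonal-in-time/tridiagonal-in-space structure of $\hat b_{jj'}^{(s,l)}$ and $\hat a_{ii'}^{(\sigma,k)}$, collapsing the cross-terms into $\hat d_{ij}^{(k)}$, invoking the monotone Gamma-ratio bounds at the cutoffs $i=M$ and $j=N+1$ to extract the factors $M^{2(k-m)}$ and $N^{2(l-n)}$, comparing $\hat d_{ij}^{(k)}$ with $\hat d_{ij}^{(m)}$, and dominating the truncated sums by the full norms of $u$ before taking square roots. The paper states the comparison simply as $\hat d_{ij}^{(k)}\lesssim \hat d_{ij}^{(m)}$ and is silent on the head--tail coupling issue you flag, but these are bookkeeping variants, not a different argument.
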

\begin{proof}
By an argument similar to the proof of Lemma \ref{op-op-lemma}, we have
\begin{align}
\begin{aligned}
& _0\partial _t^{s + l}{_{ - 1}}\partial _x^{\sigma  + k}( {u - \Pi _L^{\sigma,s}u} ) = \sum\limits_{i = k}^{M - 1} {\sum\limits_{j = N + 1}^{ + \infty } {{u_{ij}}[ {_{ - 1}\partial_x^{\sigma  + k}{\varphi _i}( x )} ] \cdot [ {_0\partial_t^{s + l}{\psi _j}( t )} ]} } \\
& \;\;\;\;\;\;\ + \sum\limits_{i = M}^{ + \infty } {\sum\limits_{j = l}^N {{u_{ij}}[ {_{ - 1}\partial_x^{\sigma  + k}{\varphi _i}( x )} ] \cdot [ {_0\partial_t^{s + l}{\psi _j}( t )} ]} }  + \sum\limits_{i = M}^{ + \infty } {\sum\limits_{j = N + 1}^{ + \infty } {{u_{ij}}[ {_{ - 1}\partial_x^{\sigma  + k}{\varphi _i}( x )} ] \cdot [ {_0\partial_t^{s + l}{\psi _j}( t )} ]} }.
\end{aligned}
\end{align}
Ignore terms $u_{ij} = 0 $, we can get
\begin{align}
\begin{aligned}
&  \left\| {_0\partial _t^{s + l}{\;_{ - 1}}\partial _x^{\sigma  + k}( {u - \Pi _L^{\sigma,s}u} )} \right\|_{{\omega ^{( {k,k} )}},{{\bar \omega }^{( {l,l} )}},Q_T}^2\\
& \;\;\;\;\;\;\;\;= \sum\limits_{j = 1}^N {\hat b_{jj}^{( {s,l} )}\sum\limits_{\scriptstyle i = M \atop
\scriptstyle{u_{ij}} \ne 0}^{ + \infty } {{{\left| {{u_{ij}}} \right|}^2}( {\hat a_{ii}^{( {\sigma ,k} )} + 2\hat a_{i,i + 1}^{( {\sigma ,k} )} \cdot \frac{{{u_{i + 1,j}}}}{{{u_{ij}}}}} )} }  + \sum\limits_{j = N + 1}^{ + \infty } {\hat b_{jj}^{( {s,l} )}\sum\limits_{\scriptstyle i = 1\atop
\scriptstyle{u_{ij}} \ne 0}^{M - 1} {{{\left| {{u_{ij}}} \right|}^2}( {\hat a_{ii}^{( {\sigma ,k} )} + 2\hat a_{i,i + 1}^{( {\sigma ,k} )} \cdot \frac{{{u_{i + 1,j}}}}{{{u_{ij}}}}} )} } \\
& \;\;\;\;\;\;\;\;\;\;\;\;\; + \sum\limits_{j = N + 1}^{ + \infty } {\hat b_{jj}^{( {s,l} )}\sum\limits_{\scriptstyle i = M \atop
\scriptstyle{u_{ij}} \ne 0}^{ + \infty } {{{\left| {{u_{ij}}} \right|}^2}( {\hat a_{ii}^{( {\sigma ,k} )} + 2\hat a_{i,i + 1}^{( {\sigma ,k} )} \cdot \frac{{{u_{i + 1,j}}}}{{{u_{ij}}}}} )} }.
\end{aligned}
\end{align}
Furthermore, for $m=k,k+1,k+2,\cdots,M $ and $n=l,l+1,l+2,\cdots,N $, we take
\begin{align}
\left\| {_0\partial _t^{s + n}{_{ - 1}}\partial _x^{\sigma  + m}u} \right\|_{{\omega ^{( {m,m} )}},{{\bar \omega }^{( {n,n} )}},Q_T}^2 = \sum\limits_{j = n + 1}^{ + \infty } {\hat b_{jj}^{( {s,n} )}\sum\limits_{\scriptstyle i = m\atop
\scriptstyle{u_{ij}} \ne 0}^{ + \infty } {{{\left| {{u_{ij}}} \right|}^2}( {\hat a_{ii}^{( {\sigma ,m} )} + 2\hat a_{i,i + 1}^{( {\sigma ,m} )} \cdot \frac{{{u_{i + 1,j}}}}{{{u_{ij}}}}} )} }.
\end{align}
Denote
\begin{align*}
\hat a_{ii}^{( {\sigma ,k} )} + 2\hat a_{i,i + 1}^{( {\sigma ,k} )} \cdot \frac{{{u_{i + 1,j}}}}{{{u_{ij}}}} = & \frac{{\Gamma ( {i + k + 1} )}}{{\Gamma ( {i - k + 1} )}}\left( {\frac{2}{{2i - 1}} \cdot \frac{{i - k}}{{i + k}} + \frac{2}{{2i + 1}} \cdot \frac{{i + \sigma }}{{i - \sigma }}\left( {\frac{{i + \sigma }}{{i - \sigma }} - 2\frac{{{u_{i + 1,j}}}}{{{u_{ij}}}}} \right)} \right)\\
: = & \frac{{\Gamma ( {i + k + 1} )}}{{\Gamma ( {i - k + 1} )}}\hat d_{ij}^{( k )}.
\end{align*}
It performs that ${\hat{d}}_{ij}^{( k )} \lesssim {\hat{d}}_{ij}^{( m )} $, so
\begin{align*}
&  \left\| {_{\rm{0}}\partial _t^{s + l}{_{ - 1}}\partial _x^{\sigma  + k}( {u - \Pi _L^{\sigma,s}u} )} \right\|_{{\omega ^{( {k,k} )}},{{\bar \omega }^{( {l,l} )}},Q_T}^{\rm{2}}\\
& \;\;\;\;\;\;\;\;= \sum\limits_{j = l}^N {{\left( \frac{T}{2} \right)^{2l}}\frac{{\Gamma ( {j + l} )}}{{\Gamma ( {j - l} )}}\sum\limits_{\scriptstyle i = M\atop
\scriptstyle{u_{ij}} \ne 0}^{ + \infty } {\frac{{\Gamma ( {i + k + 1} )}}{{\Gamma ( {i - k + 1} )}}\hat d_{ij}^{( k )} {{\left| {{u_{ij}}} \right|}^2}} } \\
& \;\;\;\;\;\; \;\;\;\;\;\;\;\;\;\;  + \sum\limits_{j = N + 1}^{ + \infty } {{\left( \frac{T}{2} \right)^{2l}}\frac{{\Gamma ( {j + l} )}}{{\Gamma ( {j - l} )}}\sum\limits_{\scriptstyle i = k\atop
\scriptstyle{u_{ij}} \ne 0}^{M - 1} {\frac{{\Gamma ( {i + k + 1} )}}{{\Gamma ( {i - k + 1} )}}\hat d_{ij}^{( k )} {{\left| {{u_{ij}}} \right|}^2}} } \\
& \;\;\;\;\;\; \;\;\;\;\;\;\;\;\;\;  + \sum\limits_{j = N + 1}^{ + \infty } {{\left( \frac{T}{2} \right)^{2l}}\frac{{\Gamma ( {j + l} )}}{{\Gamma ( {j - l} )}}\sum\limits_{\scriptstyle   i = M\atop
\scriptstyle{u_{ij}} \ne 0}^{ + \infty } {\frac{{\Gamma ( {i + k + 1} )}}{{\Gamma ( {i - k + 1} )}}\hat d_{ij}^{( k )} {{\left| {{u_{ij}}} \right|}^2}} } \\
& \;\;\;\;\;\;\;\;\leqslant \frac{{\Gamma ( {M + k + 1} )\Gamma ( {M - m + 1} )}}{{\Gamma ( {M - k + 1} )\Gamma ( {M + m + 1} )}} \sum\limits_{j = l}^{ + \infty } {{\left( \frac{T}{2} \right)^{2l}}\frac{{\Gamma ( {j + l} )}}{{\Gamma ( {j - l} )}}\sum\limits_{\scriptstyle i = M\atop
\scriptstyle{u_{ij}} \ne 0}^{ + \infty } {\frac{{\Gamma ( {i + m + 1} )}}{{\Gamma ( {i - m + 1} )}}\hat d_{ij}^{( k )} {{\left| {{u_{ij}}} \right|}^2}} }
\end{align*}
\begin{align*}
& \;\;\;\;\;\; \;\;\;\;\;\;\;\;\;\; + {\left( \frac{T}{2} \right)^{2l - 2n}}\frac{{\Gamma ( {N + 1 + l} )\Gamma ( {N + 1 - n} )}}{{\Gamma ( {N + 1 - l} )\Gamma ( {N + 1 + n} )}} \\
& \;\;\;\;\;\; \;\;\;\;\;\; \;\;\;\;\;\;\;\;\;\; \cdot \sum\limits_{j = N + 1}^{ + \infty } {{{\left( \frac{T}{2} \right)}^{2n}}\frac{{\Gamma ( {j + n} )}}{{\Gamma ( {j - n} )}}\sum\limits_{\scriptstyle i = k\atop
\scriptstyle{u_{ij}} \ne 0}^{ + \infty } {\frac{{\Gamma ( {i + k + 1} )}}{{\Gamma ( {i - k + 1} )}}\hat d_{ij}^{( k )} {{\left| {{u_{ij}}} \right|}^2}} } \\
& \;\;\;\;\;\; \;\;\;\;\;\;\;\;\;\; + {\left( \frac{T}{2} \right)^{2l - 2n}}\frac{{\Gamma ( {N + 1 + l} )\Gamma ( {N + 1 - n} )}}{{\Gamma ( {N + 1 - l} )\Gamma ( {N + 1 + n} )}}\frac{{\Gamma ( {M + k + 1} )\Gamma ( {M - m + 1} )}}{{\Gamma ( {M - k + 1} )\Gamma ( {M + m + 1} )}} \\
& \;\;\;\;\;\; \;\;\;\;\;\; \;\;\;\;\;\;\;\;\;\; \cdot \sum\limits_{j = N + 1}^{ + \infty } {{{\left( \frac{T}{2} \right)}^{2n}}\frac{{\Gamma ( {j + l} )}}{{\Gamma ( {j - l} )}}\sum\limits_{\scriptstyle i = M\atop
\scriptstyle{u_{ij}} \ne 0}^{ + \infty } {\frac{{\Gamma ( {i + k + 1} )}}{{\Gamma ( {i - k + 1} )}}\hat d_{ij}^{( k )} {{\left| {{u_{ij}}} \right|}^2}} } \\
& \;\;\;\;\;\;\;\;\lesssim {M^{2k - 2m}}\left\| {_{\rm{0}}\partial _t^{s + l}{_{ - 1}}\partial _x^{\sigma  + m}u} \right\|_{{\omega ^{( {m,m} )}},{{\bar \omega }^{( {l,l} )}},Q_T}^{\rm{2}} + {N^{2l - 2n}}\left\| {_{\rm{0}}\partial _t^{s + n}{_{ - 1}}\partial _x^{\sigma  + k}u} \right\|_{{\omega ^{( {k,k} )}},{{\bar \omega }^{( {n,n} )}},Q_T}^{\rm{2}} \\
& \;\;\;\;\;\;\;\;\;\;\;\;\;\;\;\;  + {M^{2k - 2m}}{N^{2l - 2n}}\left\| {_{\rm{0}}\partial _t^{s + n}{_{ - 1}}\partial _x^{\sigma  + m}u} \right\|_{{\omega ^{( {m,m} )}},{{\bar \omega }^{( {n,n} )}},Q_T}^{\rm{2}}.
\end{align*}
This ends the proof.
\end{proof}
Based on the two lemmas above, it holds for the following direct conclusions.
\begin{lemma}
\label{mainlemma}
Denote two fractional-order parameters $r,\rho$, satisfying $0 \leqslant r \leqslant s $, $0 \leqslant \rho \leqslant \sigma $, and regularities $k,l,m,n $ be four non-negative integers, satisfying $0 \leqslant k \leqslant m \leqslant M $, $0 \leqslant l \leqslant n \leqslant N $, it appears
\begin{align}
\begin{aligned}
& {\left\| {_0\partial _t^{r+l}{_{ - 1}}\partial _x^{\rho+k} ( {u - \Pi _L^{\sigma,s}u} )} \right\|_{{\omega ^{( { - \sigma  + \rho + k, - \sigma  + \rho + k} )}},{{\bar \omega }^{( { - s + r + l, - s + r + l} )}},Q_T}} \\
& \;\;\;\;\;\;\;\; \lesssim {M^{\rho - \sigma + k - m}}{N^{r-s}}{\left\| {_0\partial _t^{s + l}{_{ - 1}}\partial _x^{\sigma  + m}u} \right\|_{{\omega ^{( {m,m} )}},{{\bar \omega }^{( {l,l} )}},Q_T}} + {M^{\rho - \sigma}}{N^{r - s + l - n}}{\left\| {_0\partial _t^{s + n}{_{ - 1}}\partial _x^{\sigma  + k}u} \right\|_{{\omega ^{( {k,k} )}}, {{\bar \omega }^{( {n,n} )}},Q_T}}\\
& \;\;\;\;\;\;\;\;\;\;\;\; + {M^{\rho - \sigma + k - m}}{N^{r - s + l - n}}{\left\| {_0\partial _t^{s + n}{_{ - 1}}\partial _x^{\sigma  + m}u} \right\|_{{\omega ^{( {m,m} )}},{{\bar \omega }^{( {n,n} )}},Q_T}}.
\end{aligned}
\end{align}
\end{lemma}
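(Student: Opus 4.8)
The plan is to establish this estimate by the same spectral-coefficient technique used in Lemmas \ref{op-op-lemma} and \ref{op-dmn-lemma}, carried out simultaneously for the combined fractional-plus-integer orders $(r+l,\rho+k)$; in effect the statement is a single computation that generalizes both preceding lemmas at once. First I would expand the error in the tensor GJF basis, $u-\Pi_L^{\sigma,s}u=\sum_{i,j}u_{ij}\varphi_i(x)\psi_j(t)$, and use the definition of the orthogonal projector in Definition \ref{op} to discard all low modes, so that only the indices with $i\geqslant M$ or $j\geqslant N+1$ survive. Applying ${_0\partial_t^{r+l}}{_{-1}}\partial_x^{\rho+k}$ termwise and invoking the general derivative identities for GJFs displayed after \eqref{basictest-dxdt} (with $\rho$ replaced by $\rho+k$ and $r$ by $r+l$) rewrites each factor as a single GJF ${}^-J^{(-\sigma+\rho+k,-\sigma+\rho+k)}$ in space and ${}^-J^{(-s+r+l,-s+r+l)}$ in time, up to an explicit $\Gamma$-ratio prefactor.

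Next I would compute the weighted $L^2(Q_T)$ norm with the matching weights $\omega^{(-\sigma+\rho+k,\cdot)}$ and $\bar\omega^{(-s+r+l,\cdot)}$. Since these weights are exactly the ones for which the underlying Jacobi polynomials are orthogonal (see \eqref{Jacobi-ortho}), only the diagonal and the nearest-neighbour $(i,i+1)$ couplings persist, producing a double sum whose coefficients factor as $(T/2)^{2(s-r-l)}\frac{\Gamma(j-s+r+l)}{\Gamma(j+s-r-l)}\cdot\frac{\Gamma(i-\sigma+\rho+k+1)}{\Gamma(i+\sigma-\rho-k+1)}\cdot\bar d_{ij}$, the exact analogue of the coefficients $\bar a,\bar b,\hat a,\hat b$ already computed. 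I would then split the surviving index set into the three regions $\{i\geqslant M,\,1\leqslant j\leqslant N\}$, $\{1\leqslant i\leqslant M-1,\,j\geqslant N+1\}$ and $\{i\geqslant M,\,j\geqslant N+1\}$, precisely as in the proofs of Lemmas \ref{op-op-lemma} and \ref{op-dmn-lemma}, and match each region to the corresponding right-hand-side term $\|{_0\partial_t^{s+l}}{_{-1}}\partial_x^{\sigma+m}u\|$, $\|{_0\partial_t^{s+n}}{_{-1}}\partial_x^{\sigma+k}u\|$ and $\|{_0\partial_t^{s+n}}{_{-1}}\partial_x^{\sigma+m}u\|$.

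The decisive step is the monotonicity argument for the $\Gamma$-ratios. On the range $i\geqslant M$ the ratio of the space coefficient to its target counterpart $\frac{\Gamma(i+m+1)}{\Gamma(i-m+1)}$ behaves like $i^{2(\rho-\sigma+k-m)}$; since $\rho\leqslant\sigma$ and $k\leqslant m$ this exponent is nonpositive, so the ratio is nonincreasing and bounded by its value at $i=M$, yielding the factor $M^{\rho-\sigma+k-m}$ after taking the square root. The identical argument in time, using $r\leqslant s$ and $l\leqslant n$, produces $N^{r-s+l-n}$, and the mixed region contributes the product $M^{\rho-\sigma+k-m}N^{r-s+l-n}$. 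I expect the main obstacle to be the uniform control of the off-diagonal cross terms: one must verify that the cross factor $\bar d_{ij}$ attached to the order $(r+l,\rho+k)$ is dominated by the target cross factors $\hat d_{ij}^{(m)}$ (resp. $\hat d_{ij}^{(k)}$) up to a constant independent of $i,j,M,N$, exactly as the bounds $\bar d_{ij}^{(\rho)}\leqslant\frac{i+\sigma-\rho}{i-\sigma+\rho}\bar d_{ij}^{(\sigma)}$ and $\hat d_{ij}^{(k)}\lesssim\hat d_{ij}^{(m)}$ were used in the two preceding proofs; this combined fractional-plus-integer bookkeeping is the most delicate point, and it must be checked carefully so that the implied constant in $\lesssim$ stays independent of the discretization parameters.
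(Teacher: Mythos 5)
Your plan reuses the paper's coefficient-comparison machinery, but note first that this is not how the paper itself proceeds: the paper gives no separate proof of Lemma \ref{mainlemma} at all, it simply multiplies the Corollary of Lemma \ref{op-op-lemma} (which supplies the factor $M^{\rho-\sigma}N^{r-s}$) into the three terms of Lemma \ref{op-dmn-lemma}. Your one-pass computation is a genuinely different route, and it has a concrete gap: it cannot produce the cross factors $N^{r-s}$ in the first term and $M^{\rho-\sigma}$ in the second. On your region $\{i\geqslant M,\ 1\leqslant j\leqslant N\}$ the space ratio does give $M^{\rho-\sigma+k-m}$, but the accompanying time ratio
\[
\frac{\Gamma(j-s+r+l)}{\Gamma(j+s-r-l)}\Big/\frac{\Gamma(j+l)}{\Gamma(j-l)}\sim j^{2(r-s)}
\]
is \emph{decreasing} in $j$, so its supremum over $1\leqslant j\leqslant N$ is attained at the small-$j$ end and is an $O(1)$ constant; $N^{r-s}$ is its \emph{minimum} (its size near $j=N$), not an upper bound. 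Symmetrically, on $\{1\leqslant i\leqslant M-1,\ j\geqslant N+1\}$ the space ratio is $O(1)$, not $M^{\rho-\sigma}$. Hence the argument you describe proves only the weaker estimate in which the three prefactors are $M^{\rho-\sigma+k-m}$, $N^{r-s+l-n}$ and $M^{\rho-\sigma+k-m}N^{r-s+l-n}$, with no cross factors on the first two terms. (By contrast, the off-diagonal $\bar d$, $\hat d$ bookkeeping you single out as the delicate point is benign: exactly as in the two quoted proofs it only costs constants.)

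This gap is not repairable within your approach, because the stated inequality is itself too strong. Take $k=l=m=n=0$, $r<s$, and let $u$ have the single-mode expansion $u=u_{M1}\varphi_M(x)\psi_1(t)$, so that (with the paper's identification of $\Pi_L^{\sigma,s}$ with truncation of the expansion, used in all three proofs) $u-\Pi_L^{\sigma,s}u=u$ and all mass lies in your first region. The paper's own coefficient formulas give
\[
\left\| {_0\partial_t^{r}{_{-1}}\partial_x^{\rho}u} \right\|_{\omega^{(-\sigma+\rho,-\sigma+\rho)},\bar\omega^{(-s+r,-s+r)},Q_T}^2
=\bar b^{(r)}_{11}\,\bar a^{(\rho)}_{MM}\,|u_{M1}|^2\cong M^{2(\rho-\sigma)}\left\| {_0\partial_t^{s}{_{-1}}\partial_x^{\sigma}u} \right\|_{Q_T}^2 ,
\]
since $\bar b^{(r)}_{11}=(T/2)^{2(s-r)}\Gamma(1-s+r)/\Gamma(1+s-r)$ is a fixed positive constant, whereas the right-hand side of the statement (which in this case reduces to the Corollary of Lemma \ref{op-op-lemma}) is $\cong M^{\rho-\sigma}N^{r-s}\left\|{_0\partial_t^{s}{_{-1}}\partial_x^{\sigma}u}\right\|_{Q_T}$ and is smaller by the factor $N^{s-r}\to\infty$. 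The same obstruction is why the Corollary's ``apply Lemma \ref{op-op-lemma} twice'' does not close (its first right-hand term, with time order still $r$, simply reproduces itself under iteration), and for $l\geqslant1$, $r<s$ matters get worse: the modes $j\leqslant l$ are annihilated by all three right-hand norms but not by ${_0\partial_t^{r+l}}$. So you should either prove the weaker additive version your computation actually yields (which still gives spectral accuracy in the theorem that follows), or restrict the claim to $r=s$ in time and $\rho=\sigma$ in space; claiming the stated cross factors from the region-by-region comparison is a step that fails.
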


\begin{theorem}
Suppose fractional orders $\alpha,\gamma \in (0,1), \beta,\mu \in (1,2)$, $k,l,m,n$ are non-negative integers, satisfying $0 \leqslant k \leqslant m, 0 \leqslant l \leqslant n $. Let $u, u_L$ be respectively the solutions of \eqref{3-1} and \eqref{4-5}. Denote
\begin{align*}
s = \max\left\lbrace{\frac{\alpha }{2},\frac{\gamma }{2}}\right\rbrace, \sigma = \max\left\lbrace{\frac{\beta }{2},\frac{\mu }{2}}\right\rbrace.
\end{align*}
If
\begin{align*}
u \in {}_0H^{s+l}({I;H^{\sigma+m}(\Lambda)}) \cap H^{s+n}({I;H_0^{\sigma+k}(\Lambda)}) \cap H^{s+n}({I;H^{\sigma+m}(\Lambda)}),
\end{align*}
then we have
\begin{align}
\label{mainerrorestimates}
\begin{aligned}
& \left\| u - u_L \right\|_{{B^{\frac{\alpha }{2}+l,\frac{\beta }{2}+k,\frac{\gamma }{2}+l,\frac{\mu }{2}+k}}( Q_T )} \lesssim {M^{- \sigma + k - m}}{N^{\frac{\alpha }{2}-s}}{\left\| {_0\partial _t^{s+l}{_{ - 1}}\partial _x^{\sigma  + m}u} \right\|_{{\omega ^{( {m,m} )}},{{\bar \omega }^{( {l,l} )}},Q_T}} \\
& \;\;+ {M^{\frac{\beta }{2} - \sigma}}{N^{- s + l - n}}{\left\| {_0\partial _t^{s + n}{_{ - 1}}\partial _x^{\sigma +k}u} \right\|_{{\omega ^{( {k,k} )}},{{\bar \omega }^{( {n,n} )}},Q_T}} + {M^{\frac{\mu }{2} - \sigma + k - m}}{N^{\frac{\gamma }{2} - s + l - n}}{\left\| {_0\partial _t^{s + n}{_{ - 1}}\partial _x^{\sigma  + m}u} \right\|_{{\omega ^{( {m,m} )}},{{\bar \omega }^{( {n,n} )}},Q_T}}.
\end{aligned}
\end{align}
\end{theorem}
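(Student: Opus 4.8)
The plan is to pass from the scheme error to a projection error and then invoke Lemma~\ref{mainlemma}. First I would use the well-posedness of the discrete problem \eqref{4-5} (Theorem~\ref{thm4-1}): the continuity of $\mathscr{A}_\varepsilon^{(\alpha,\beta,\gamma,\mu)}$ on ${}_0B^{\frac{\alpha}{2},\frac{\beta}{2},\frac{\gamma}{2},\frac{\mu}{2}}(Q_T)\times{}^0B^{\frac{\alpha}{2},\frac{\beta}{2},\frac{\gamma}{2},\frac{\mu}{2}}(Q_T)$, together with the discrete inf--sup condition behind \eqref{4-6} and the Galerkin orthogonality $\mathscr{A}_\varepsilon^{(\alpha,\beta,\gamma,\mu)}(u-u_L,v_L)=0$ for every $v_L\in\bar V_L(Q_T)$, give a quasi-optimal bound that replaces $u-u_L$ by the orthogonal projection error $u-\Pi_L^{\sigma,s}u$ of Definition~\ref{op}. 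It then remains to estimate $\|u-\Pi_L^{\sigma,s}u\|$ in the norm of the statement, which is precisely what Lemma~\ref{mainlemma} supplies.

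Next I would unfold $\|\cdot\|_{B^{\frac{\alpha}{2}+l,\frac{\beta}{2}+k,\frac{\gamma}{2}+l,\frac{\mu}{2}+k}(Q_T)}$ into its three defining seminorms, carried by $H^{\frac{\alpha}{2}+l}(I;L^2(\Lambda))$, $L^2(I;H_0^{\frac{\beta}{2}+k}(\Lambda))$ and $H^{\frac{\gamma}{2}+l}(I;H^{\frac{\mu}{2}+k}(\Lambda))$, and bound each one by a single instance of Lemma~\ref{mainlemma}. The three instances use $(r,\rho)=(\tfrac{\alpha}{2},0)$, $(0,\tfrac{\beta}{2})$ and $(\tfrac{\gamma}{2},\tfrac{\mu}{2})$ --- exactly the orders of the three fractional operators ${}_0\partial_t^{\alpha}$, ${}_{-1}\partial_x^{\beta}$ and ${}_0\partial_t^{\gamma}{}_{-1}\partial_x^{\mu}$ of \eqref{1-1} --- and, after selecting the matching regularity indices, the leading term of each instance is dominated by one of the three terms on the right of \eqref{mainerrorestimates}, the remaining terms being absorbed via $0\le k\le m$, $0\le l\le n$, $\tfrac{\alpha}{2},\tfrac{\gamma}{2}\le s$, $\tfrac{\beta}{2},\tfrac{\mu}{2}\le\sigma$ and $M,N\ge 1$. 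To replace the weighted norms produced by Lemma~\ref{mainlemma} by the unweighted $B$-seminorms I would use the equivalence between weighted fractional-derivative norms and Sobolev norms (Lemma~\ref{lemma2-3} and the framework of \cite{CSW,LX}); the spatial weight causes no loss since $\omega^{(-\sigma,-\sigma)}(x)=(1-x^2)^{-\sigma}\ge 1$ on $\Lambda$.

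The hard part is that quasi-optimality as above controls the error only in the energy norm, i.e.\ the case $l=k=0$; for $l,k>0$ the target norm is strictly stronger, so I would split $u-u_L=(u-\Pi_L^{\sigma,s}u)+(\Pi_L^{\sigma,s}u-u_L)$, handle the first summand as in the previous paragraph, and estimate the discrete remainder $\Pi_L^{\sigma,s}u-u_L\in V_L(Q_T)$ on its own. Testing the defining identities of $u_L$ and $\Pi_L^{\sigma,s}u$ against $\bar V_L(Q_T)$ and using the discrete inf--sup condition gives the energy-norm super-closeness $\|\Pi_L^{\sigma,s}u-u_L\|\lesssim\|u-\Pi_L^{\sigma,s}u\|$; lifting it to the stronger norm requires an inverse inequality on $V_L(Q_T)$, and the main obstacle is to check that the extra powers of $M$ and $N$ it introduces are absorbed by the projection rates already obtained, so that the discrete remainder does not degrade \eqref{mainerrorestimates}. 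Here the tensor-product structure $V_L(Q_T)=X_M(\Lambda)\otimes Y_N(I)$ and the tridiagonal (resp.\ diagonal) stiffness matrices of \eqref{basictest-dxdt} are what make the bookkeeping close.
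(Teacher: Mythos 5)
Your first two paragraphs are exactly the paper's own proof: the paper invokes the quasi-optimality bound \eqref{standarderror} (calling it ``the standard conclusion of error estimation''), takes $\phi_L=\Pi_L^{\sigma,s}u$ from Definition~\ref{op}, and concludes by Lemma~\ref{mainlemma}, whose three instances $(r,\rho)=(\tfrac{\alpha}{2},0),\,(0,\tfrac{\beta}{2}),\,(\tfrac{\gamma}{2},\tfrac{\mu}{2})$ produce the three terms of \eqref{mainerrorestimates} just as you describe. Where you depart from the paper is in your last paragraph, and your instinct there is sound: Babu\v{s}ka-type quasi-optimality via continuity, inf-sup and Galerkin orthogonality is only available in the energy norm ${B^{\frac{\alpha }{2},\frac{\beta }{2},\frac{\gamma }{2},\frac{\mu }{2}}}(Q_T)$, where Theorems~\ref{thm3-1} and~\ref{thm4-1} establish those properties; the paper's \eqref{standarderror}, stated in the stronger norm with indices $+l,+k$, is not such a standard conclusion and is asserted without justification. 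So you have correctly located the weak point of the paper's own argument.

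However, your proposed repair is where the genuine gap lies, because the ``absorption'' you defer is exactly the step that fails. On $V_L(Q_T)$ the inverse inequality costs a factor $M^kN^l$ (from the diagonal growth $\hat a_{ii}^{(\sigma,k)}\sim i^{2k}$, $\hat b_{jj}^{(s,l)}\sim j^{2l}$), while the energy-norm projection error (Lemma~\ref{mainlemma} with its integers $k,l$ set to zero, keeping $m,n$) decays like $M^{\rho-\sigma-m}N^{r-s}$, $M^{\rho-\sigma}N^{r-s-n}$ and $M^{\rho-\sigma-m}N^{r-s-n}$. After multiplication by $M^kN^l$, only the cross term matches its counterpart in \eqref{mainerrorestimates}: the first term overshoots the target rate $M^{\rho-\sigma+k-m}N^{r-s}$ by a factor $N^l$, the second overshoots $M^{\rho-\sigma}N^{r-s+l-n}$ by a factor $M^k$, and the regularity norms produced (e.g.\ $\left\| {_0\partial _t^{s}{_{ - 1}}\partial _x^{\sigma+m}u} \right\|_{{\omega ^{( {m,m} )}},{{\bar \omega }^{( {0,0} )}},Q_T}$ instead of $\left\| {_0\partial _t^{s+l}{_{ - 1}}\partial _x^{\sigma+m}u} \right\|_{{\omega ^{( {m,m} )}},{{\bar \omega }^{( {l,l} )}},Q_T}$) are not those appearing in the statement. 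Since $N^lM^{-k}$ and $M^kN^{-l}$ are unbounded when $M$ and $N$ grow independently, these excess factors cannot be absorbed, so your route establishes the theorem only for $k=l=0$. To cover $k,l>0$ you would have to prove the strong-norm quasi-optimality that the paper asserts without proof (for instance by controlling the Petrov--Galerkin error itself, not just the $L^2$ projection, term by term in the higher-order weighted norms, exploiting the diagonal/tridiagonal structure of the matrices $P_\rho$ and $Q_r$), rather than lift an energy estimate by an inverse inequality.
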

\begin{proof}
Thanks to the standard conclusion of error estimation, we could get the fomula
\begin{align}
\label{standarderror}
\left\| u - u_L \right\|_{{B^{\frac{\alpha }{2}+l,\frac{\beta }{2}+k,\frac{\gamma }{2}+l,\frac{\mu }{2}+k}}( Q_T )} \leqslant \inf_{\phi_L \in V_L} \left\| u - \phi_L \right\|_{{B^{\frac{\alpha }{2}+l,\frac{\beta }{2}+k,\frac{\gamma }{2}+l,\frac{\mu }{2}+k}}( Q_T )}.
\end{align}
Taking $\phi_L = \Pi _L^{\sigma,s}u$ in the right hand of \eqref{standarderror}, we can get the conclusion by Lemma \ref{mainlemma}.
\end{proof}

\section{Numerical Results}

In this section, we aim to verify the convergence of the problem \eqref{1-1}-\eqref{1-3} via several numerical results. More importantly, our goal here is to get a both fast and accurate solution. We performed our computations by using Matlab 2018b software on an Intel(R) Core(TM) i7, 2.9GHz CPU machine with 16 Gbyte of memory.
\subsection{Test Problem 1}

We consider the problem \eqref{1-1} with exact solution
\begin{align}
\label{TP1}
u({x,t})=( 1 - x ){( 1+x) }^{\sigma + \eta }E_{1,\sigma + \eta +1}(1+x)\cdot{\left( \frac{2t}{T} \right)}^{s+\theta}E_{1,s+\theta+1}\left(\frac{2t}{T}\right),
\end{align}
where $E_{\bar{a},\bar{b}}(z)$ present the famous Mittag-Leffler (ML) function with two parameters $\bar{a},\bar{b} \in \mathbb{C}$, defined by means of the series expansion
\begin{align*}
E_{\bar{a},\bar{b}}(z)=\sum^{\infty}_{\bar{k}=0}\frac{z^{\bar{k}}}{\Gamma(\bar{a}\bar{k}+\bar{b})}.
\end{align*}
We choose the fractional orders $\alpha = 0.5,\ \beta = 1.2,\ \gamma = 0.2,\ \mu = 1.8 $, which means
$$\sigma = \max\left\lbrace\frac{\beta}{2}, \frac{\mu}{2} \right\rbrace = 0.9,\ s = \max\left\lbrace\frac{\alpha}{2}, \frac{\gamma}{2} \right\rbrace = 0.25, $$
and set $\varepsilon = 1,\ \eta = \theta = 4 $. The exact solution \eqref{TP1} satisfies the initial and boundary conditions \eqref{1-2} and \eqref{1-3}.
We get the results via  Figure 1 and Figure 2.

\begin{figure}
\label{Fig1}
\begin{center}
\caption{$L^2$-norm error estimate varies from $N$ with different parameter $M$ for Test Problem 1.}
\includegraphics[width=10cm]{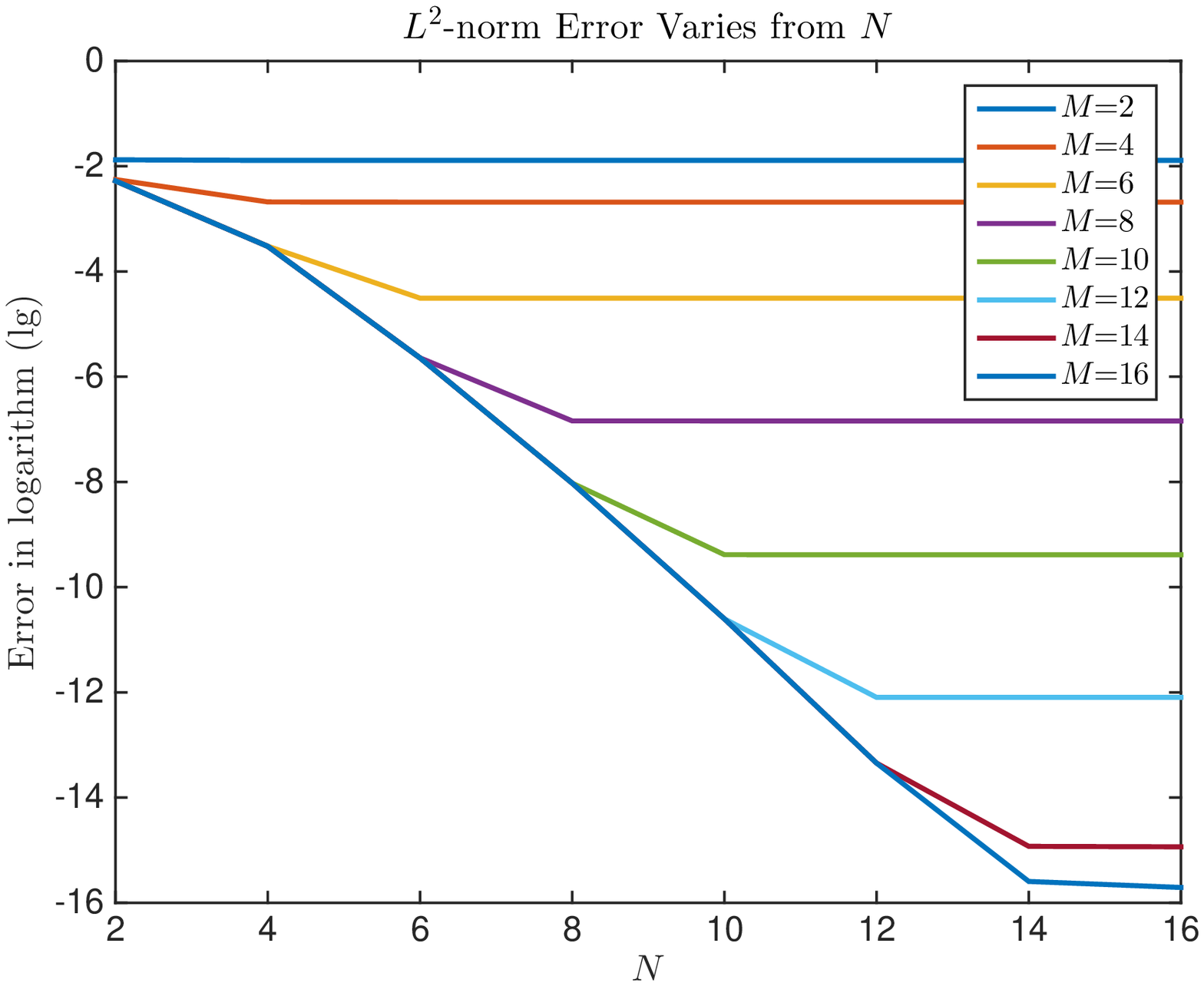}
\end{center}
\end{figure}

\begin{figure}
\label{Fig2}
\begin{center}
\caption{$L^2$-norm error estimate varies from $M$ with  different parameter $N$ for Test Problem 1.}
\includegraphics[width=10cm]{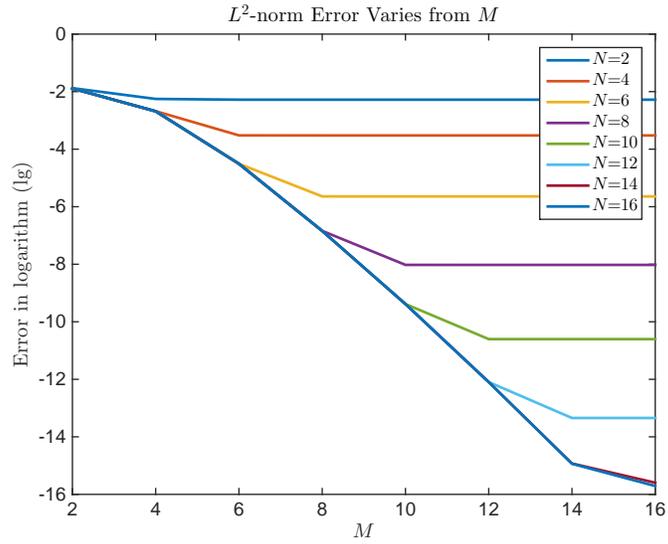}
\end{center}
\end{figure}

We see that the error in logarithm dropped rapidly in Figure 1. In fact, the convergence is also perfect while $\alpha, \gamma=0.01$ or $\alpha, \gamma = 0.99$, and $\beta, \mu=1.01$ or $\beta, \mu = 1.99$, that is, fractional space-time spectral method is available to all fractional orders in those ranges. Compared with difference in time or space, it is better for fractional space-time spectral method to solve these traditional diffusion equations because of keeping the global characteristics. 

\subsection{Test Problem 2}
This part will show the relationship with error estimates and regularity of exact solutions. Here, we choose fractional orders  $\alpha = 0.5,\beta = 1.2,\gamma = 0.2,\mu = 1.8 $, and regularities $\theta = \eta = 8$, we show the error estimates of norm ${}_0H^{s+l}({I;H_0^{\sigma}({\Lambda})})$ vary from $N$ with different parameters $l$ in Figure 3. Similarly, we always get those of norm ${}_0H^s({I;H_0^{\sigma+k}({\Lambda})})$ vary from $M$ with different parameters $k$ in Figure 4.
\begin{figure}
\label{Fig3}
\begin{center}
\caption{${}_0H^{s+l}({I;H^{\sigma}_0({\Lambda})})$-norm error estimate varies from $N$ with  different parameter $l$ for Test Problem 2.}
\includegraphics[width=10cm]{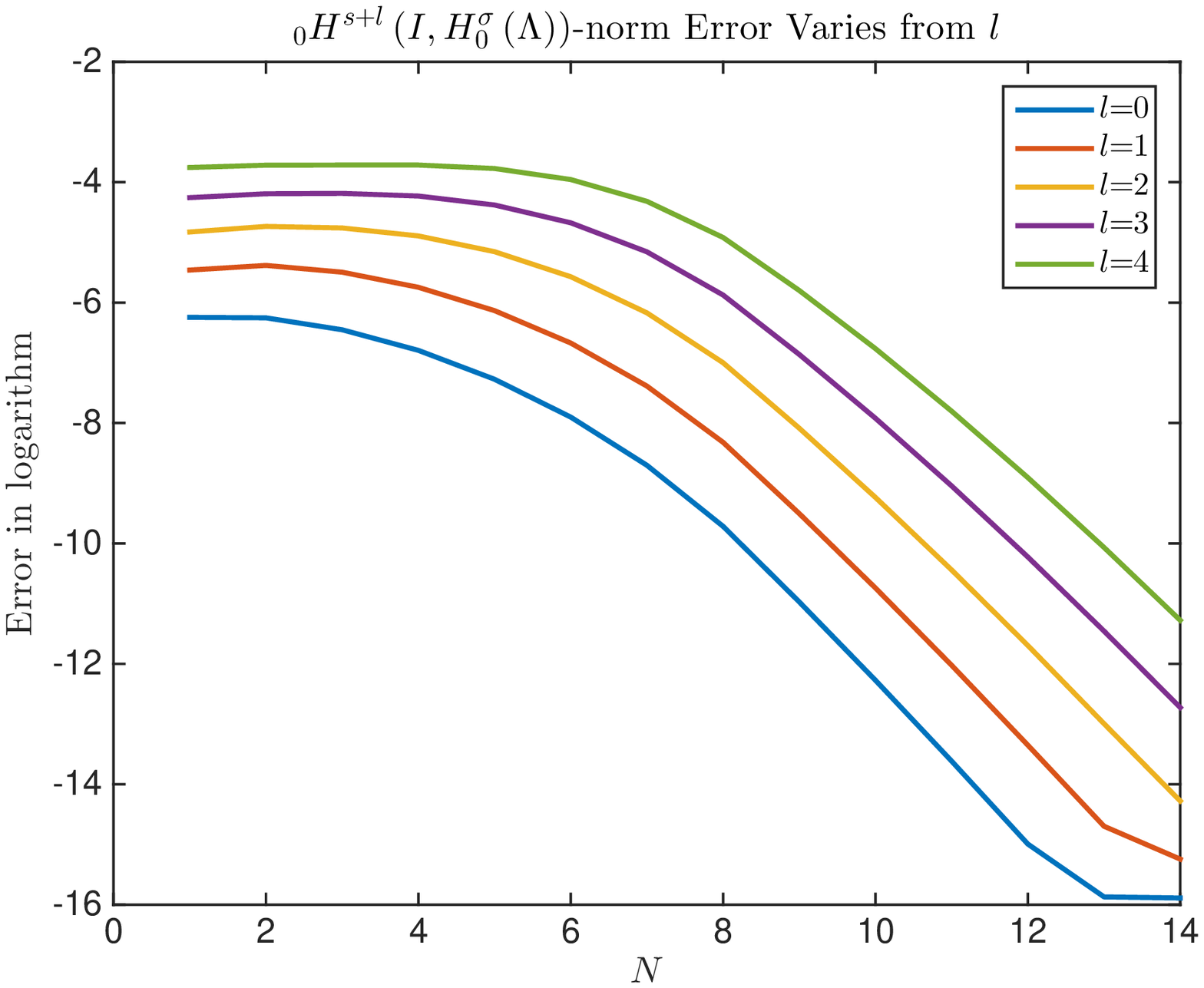}
\end{center}
\end{figure}

\begin{figure}
\label{Fig4}
\begin{center}
\caption{${}_0H^{s}({I;H^{\sigma+k}_0({\Lambda})})$-norm error estimate varies from $M$ with  different parameter $k$ for Test Problem 2.}
\includegraphics[width=10cm]{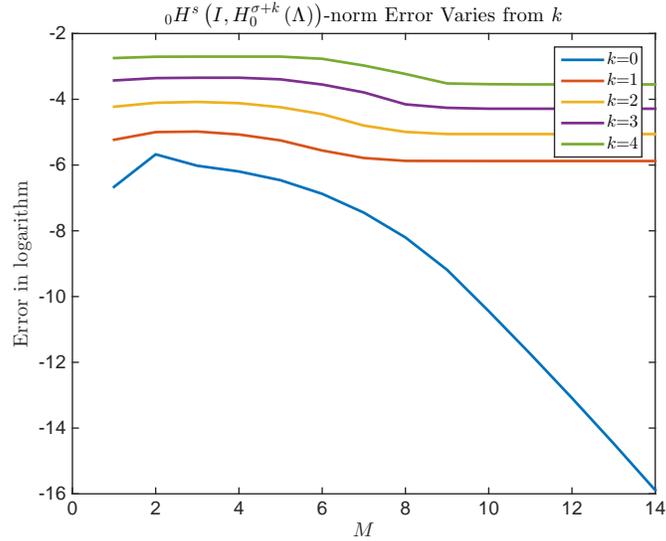}
\end{center}
\end{figure}
As expected, the convergence orders decreased with $k$ or $l$ increased. What's more, these error estimates satisfy the spectral convergence.

\subsection{Test Problem 3}
As for this part, we need to examine how the viscosity term influences the space-time diffusion. Suppose
\begin{align*}
f ( x,t ) = {(1+x)}^{\sigma+\eta}{\left( \frac{2t}{T} \right)}^{s+\theta}{\left\lbrace{(1-x)E_{1,\sigma+\eta+1}{(1+x)}\left[{\left( \frac{2t}{T} \right)}^{-\alpha}E_{1,s+\theta-\alpha+1}{\left( \frac{2t}{T} \right)}+E_{1,s+\theta+1}{\left( \frac{2t}{T} \right)}\right]}\right\rbrace} \\
 - {(1+x)}^{\sigma+\eta-\beta}{\left( \frac{2t}{T} \right)}^{s+\theta}{E_{1,s+\theta+1}{\left( \frac{2t}{T} \right)}\left[{(1-x)}E_{1,\sigma+\eta-\beta+1}{(1+x)}-\beta{(1+x)}E_{1,\sigma+\eta-\beta+2}{(1+x)}\right]},
\end{align*}
where the exact solution is \eqref{TP1} while $\varepsilon = 0$. We change the parameter $\varepsilon$ from 0 to 1. Denote fractional orders  $\alpha = 0.5,\beta = 1.5,\gamma = 0.5,\mu = 1.5 $, and  $\theta = \eta = 4$, we get the graphs of numerical solutions depend on the precision in Figure 5.

Based on Figure 5, we could say that, with the parameter $\varepsilon$ increasing from 0 to 1, the fractional diffusion becomes slow. In fact, we verify by numerical tests, that the term $_0\partial_t^{\gamma} {}_{-1}\partial_x^{\mu} u(x,t) $ presents viscosity term in space-time fractional diffusion. 

\section{Concluding remarks}

This paper proposes a new space-time spectral method solving a kind of space-time fractional reaction-diffusion equations with viscosity terms via GJFs. The existence and uniqueness of the problem are proved by the Babu\v{s}ka Lax-Milgram lemma. We get good results with constructed spectral basis functions in both space and time directions. Based on the spectral basis functions, the coefficient matrices seem sparse. At the same time, the method has been calculated fast and efficiently. By means of numerical results, we can simply solve the problems with Riemann-Liouville fractional derivatives and get an exponential convergence. Furthermore, we present a new type of viscosity terms depend on Riemann-Liouville fractional derivatives, and verified the effect of the terms in numerical tests. Our future work is to solve more FPDEs by building new spectral basis functions, which are complex in different kinds of derivative operators. 

\section*{Acknowledgments}
We appreciate the referees for their valuable suggestions. This work is partially supported by the National Natural Science Foundation of China (Grant No. U1637208), National Key Research and Development Program of China(2017YFB1401801), Natural Scientific Foundation of Heilongjiang Province in China (A2016003).
\begin{figure}[H]
\label{Fig5}
\centering
\subfigure{
\includegraphics[width=0.45\textwidth]{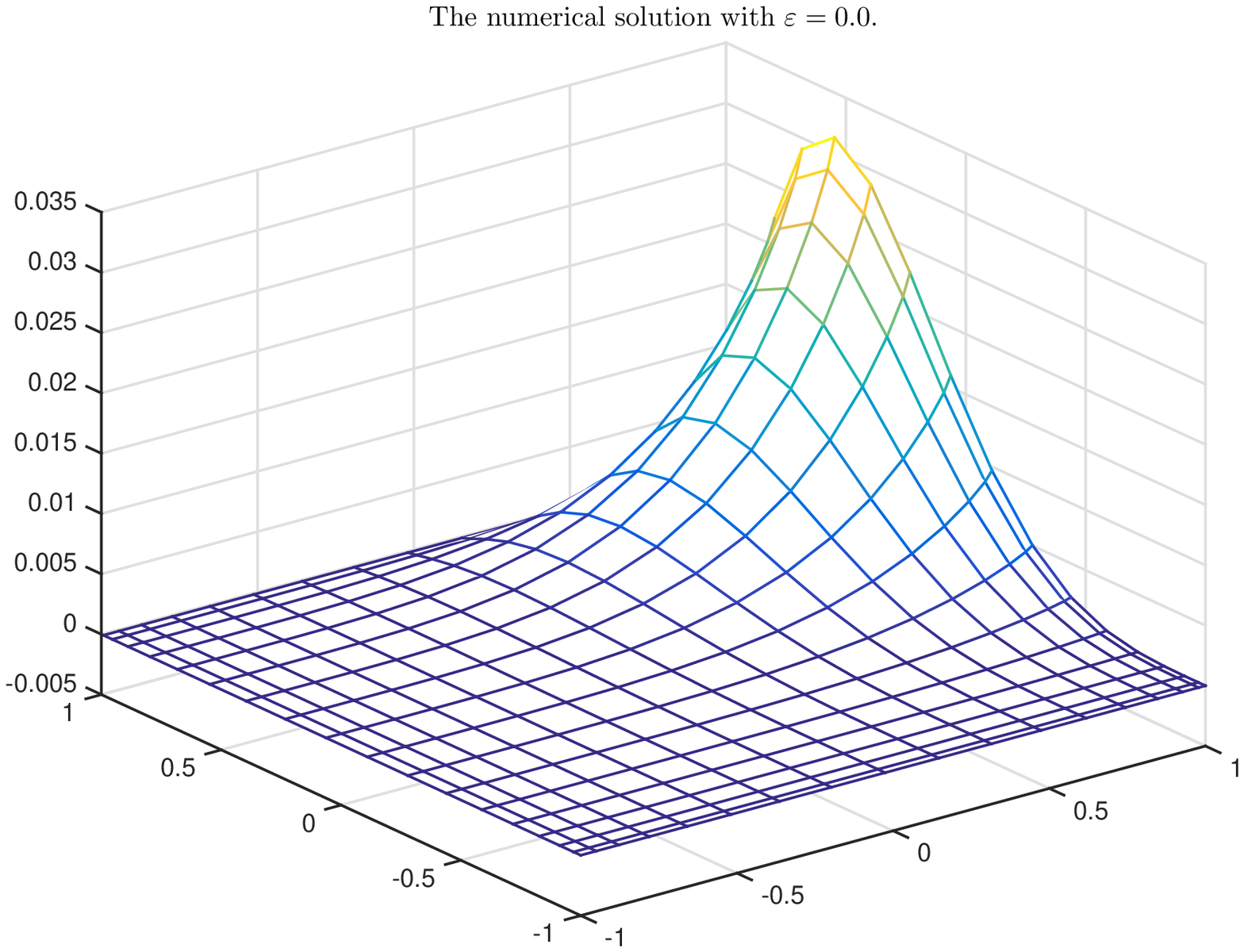}}
\subfigure{
\includegraphics[width=0.45\textwidth]{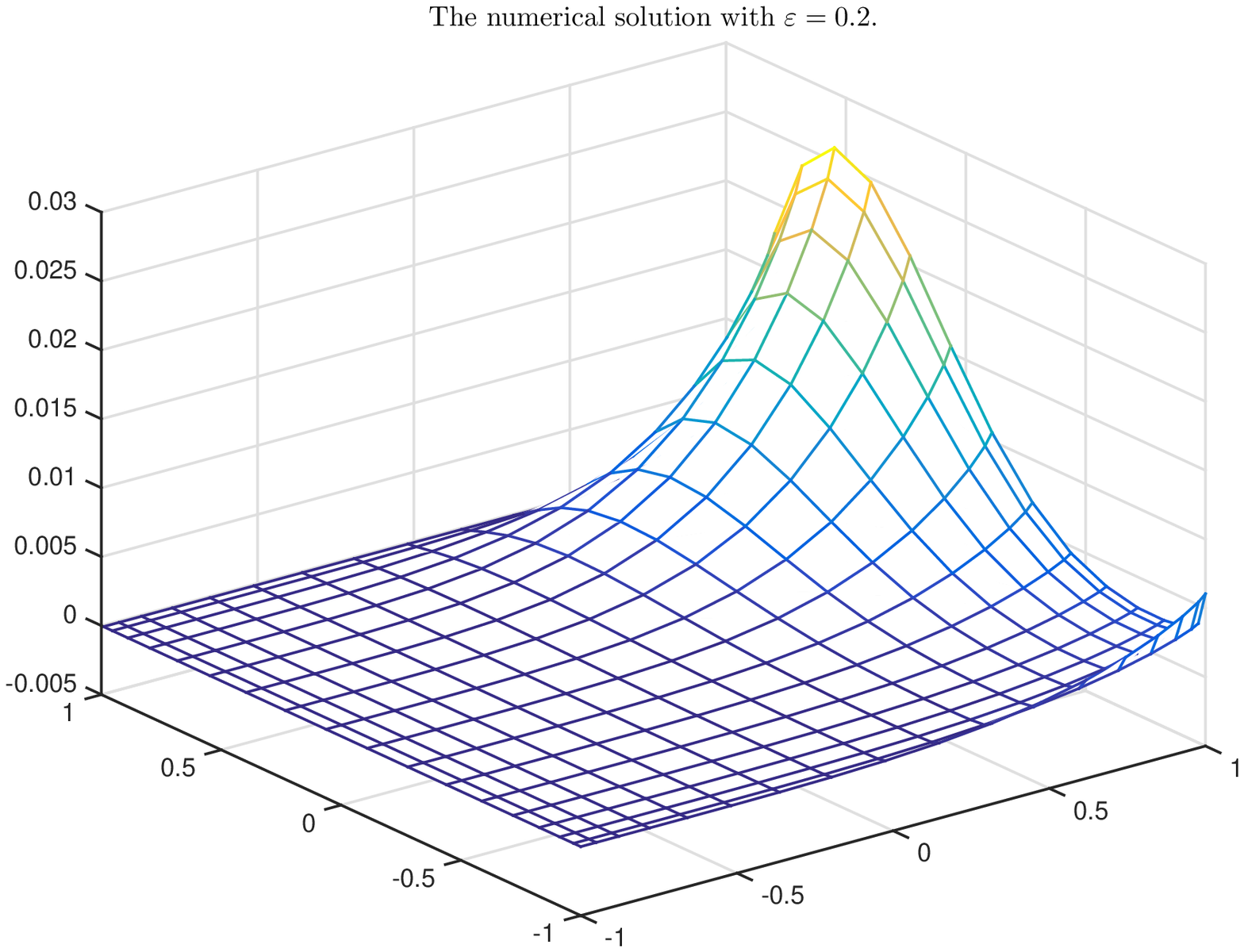}}
\subfigure{
\includegraphics[width=0.45\textwidth]{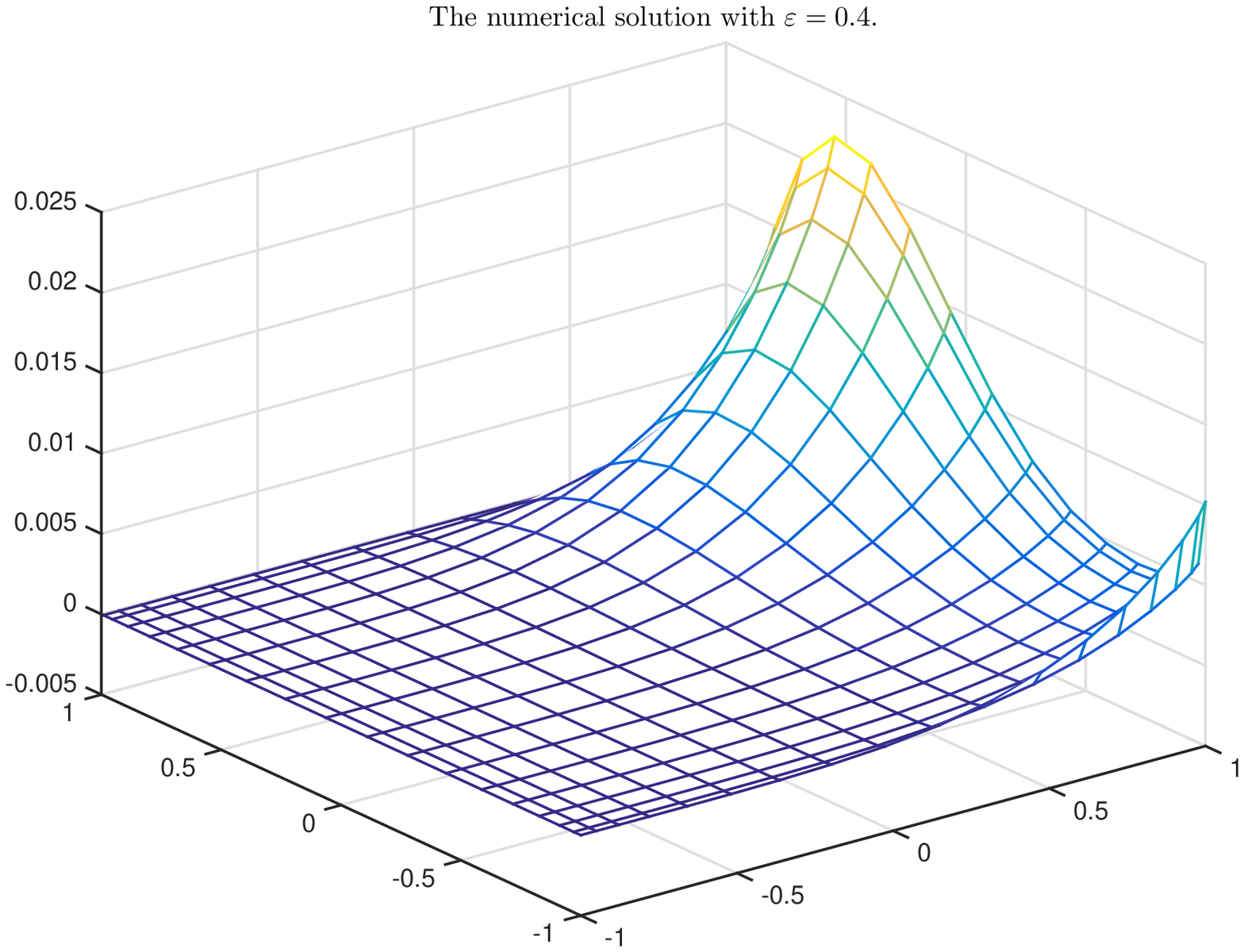}}
\subfigure{
\includegraphics[width=0.45\textwidth]{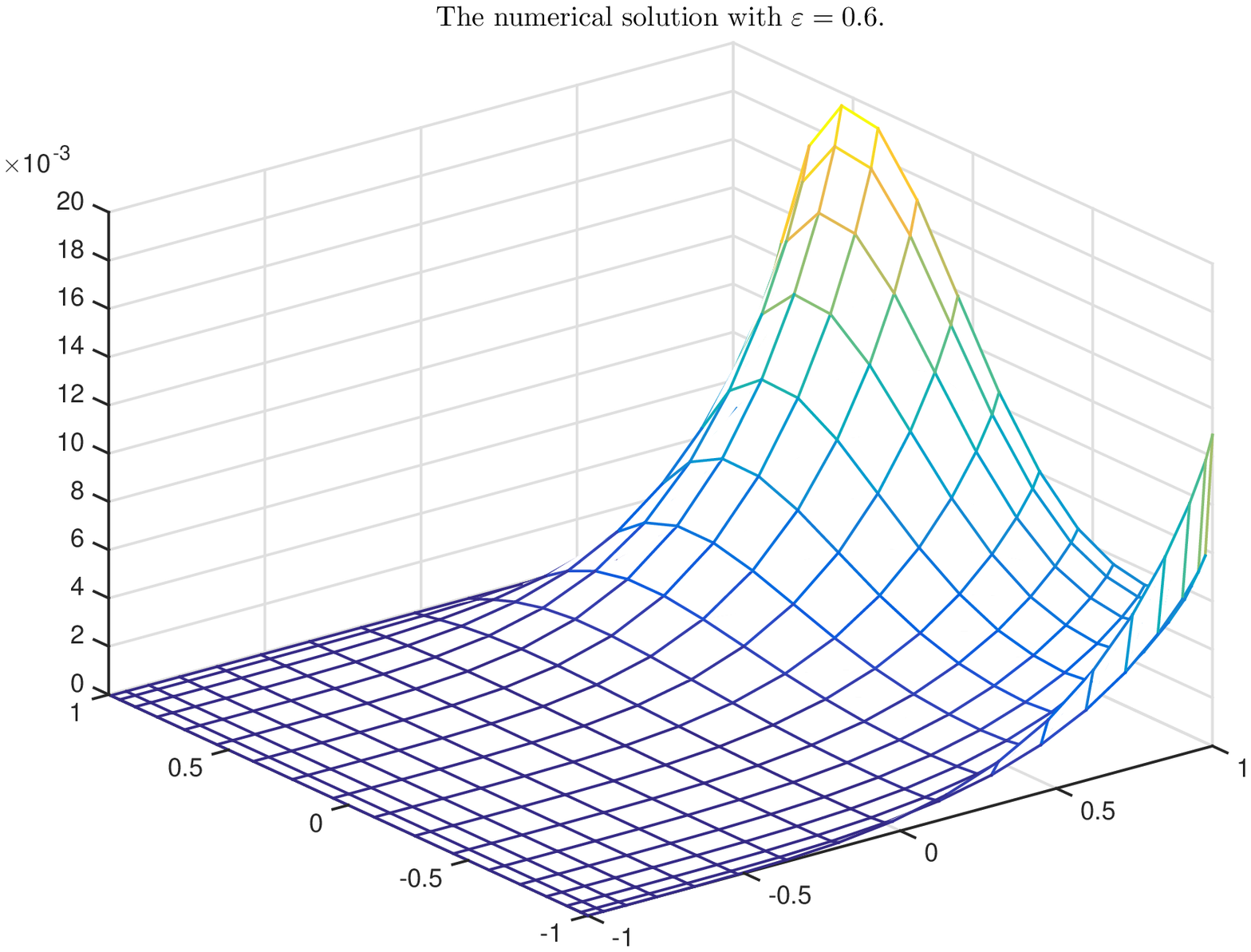}}
%\end{figure}
%\begin{figure}[H]
%\centering
\subfigure{
\includegraphics[width=0.45\textwidth]{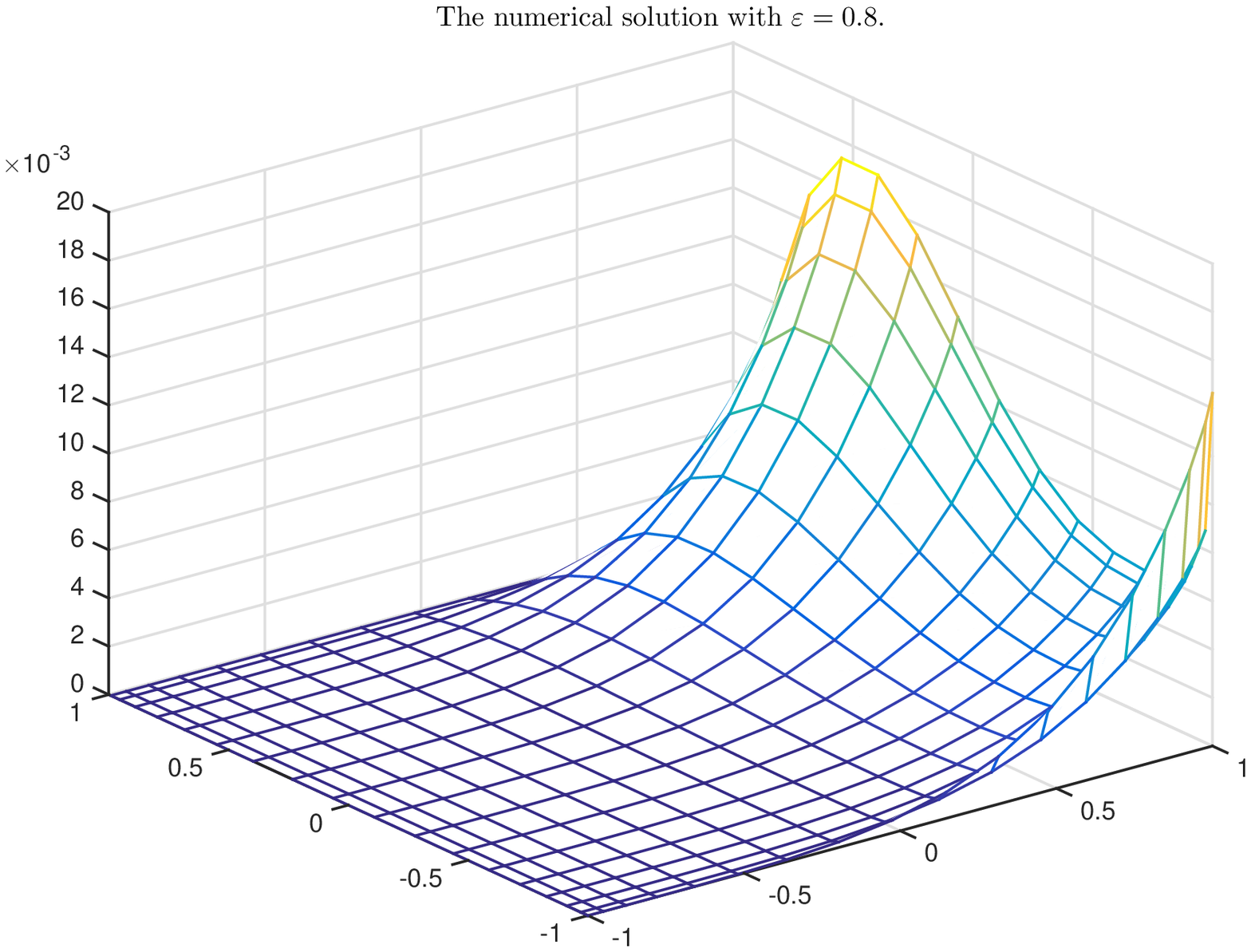}}
\subfigure{
\includegraphics[width=0.45\textwidth]{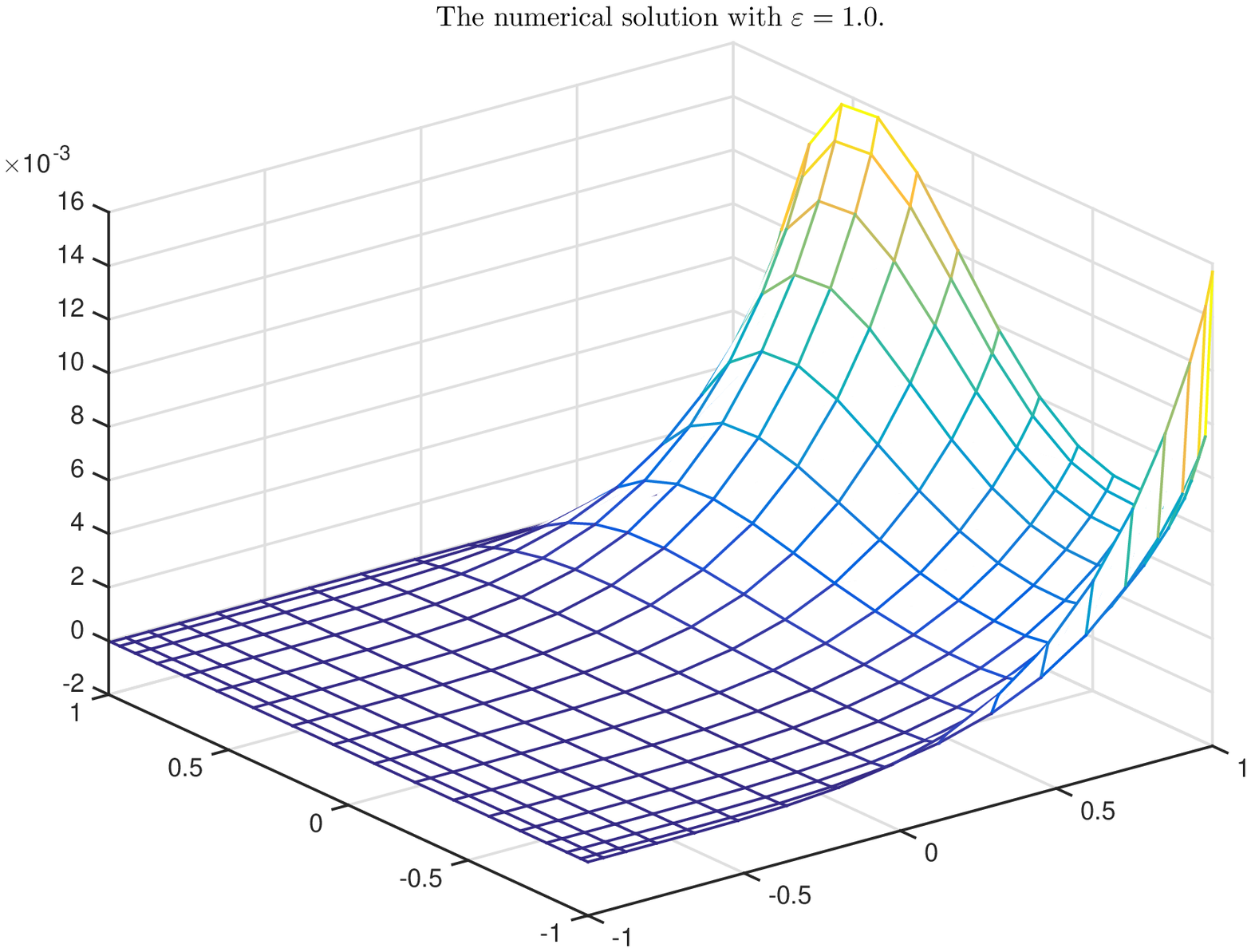}}
\caption{The numerical solutions with different parameters $\varepsilon$.}
\end{figure}

\begin{appendix}
\section{Proof of Theorem \ref{thm3-1} }
\label{AppendixA}

We first lead in two lemmas.

%\begin{lemmaA}
%\label{decreaselemma}
%Suppose $\alpha \in [0,1]$. The constant 
%\begin{align}
%\bar{\gamma}_0^{(\alpha,\alpha)} = \frac{2^{2\alpha+1}\Gamma^2(\alpha+1)}{(2\alpha+1)\Gamma(2\alpha+1)}
%\end{align}
%is strictly decreasing with variable $\alpha$.
%\end{lemmaA}
%\begin{proof}
%With the expression 6.1.18 in \cite{AS}, that is
%\begin{align*}
%\Gamma(2z) = (2\pi)^{-\frac{1}{2}}2^{2z-\frac{1}{2}}\Gamma(z)\Gamma\left(z+\frac{1}{2}\right),
%\end{align*}
%we rewrite the constant $\bar{\gamma}_0^{(\alpha,\alpha)}$ as follows:
%\begin{align*}
%\bar{\gamma}_0^{(\alpha,\alpha)} = \frac{2^{2\alpha+1}\Gamma^2(\alpha+1)}{\Gamma(2\alpha+2)} = \frac{\pi^{\frac{1}{2}}\Gamma(\alpha+1)}{\Gamma\left(\alpha+\frac{3}{2}\right)}.
%\end{align*}
%Denote
%\begin{align*}
%g(\alpha):=\ln\bar{\gamma}_0^{(\alpha,\alpha)} = \frac{1}{2}\ln\pi + \ln\Gamma(\alpha+1) - \ln\Gamma\left(\alpha+\frac{3}{2}\right),
%\end{align*}
%and recall the Psi (Digamma) function
%\begin{align*}
%\Psi(z) = \frac{{\rm d}}{{\rm d}z}\ln\Gamma(z) = \frac{\Gamma'(z)}{\Gamma(z)}, z \in \mathbb{R}^+,
%\end{align*}
%we have
%\begin{align*}
%\frac{{\rm d}}{{\rm d}\alpha}g(\alpha) = \Psi(\alpha+1) - \Psi\left(\alpha+\frac{3}{2}\right)<0,
%\end{align*}
%where the Psi function is increasing in $(0,2)$. So we get the conclusion.
%\end{proof}

\begin{lemmaA}
\label{increaselemma}
Denote $n \geqslant 1$ is a positive integer, $\alpha \in [0,1]$. The constant 
\begin{align}
\bar{\gamma}_n^{(\alpha,\alpha)} = \frac{2^{2\alpha+1}\Gamma^2(n+\alpha+1)}{(2n+2\alpha+1)n!\Gamma(n+2\alpha+1)}
\end{align}
is strictly increasing with variable $\alpha$.
\end{lemmaA}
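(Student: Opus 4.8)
The plan is to show that the logarithmic derivative $\frac{d}{d\alpha}\ln\bar\gamma_n^{(\alpha,\alpha)}$ is strictly positive on $[0,1]$; since every factor of $\bar\gamma_n^{(\alpha,\alpha)}$ is positive for $\alpha\in[0,1]$ and $n\geq 1$, this yields the claimed strict monotonicity. Writing $\psi=\Gamma'/\Gamma$ for the digamma function and differentiating
\[
\ln\bar\gamma_n^{(\alpha,\alpha)}=(2\alpha+1)\ln 2+2\ln\Gamma(n+\alpha+1)-\ln(2n+2\alpha+1)-\ln n!-\ln\Gamma(n+2\alpha+1),
\]
I obtain $\frac{d}{d\alpha}\ln\bar\gamma_n^{(\alpha,\alpha)}=2D(\alpha)$, where
\[
D(\alpha):=\ln 2+\psi(n+\alpha+1)-\frac{1}{2n+2\alpha+1}-\psi(n+2\alpha+1).
\]
Thus the entire statement reduces to proving $D(\alpha)>0$ for all $\alpha\in[0,1]$.

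The endpoint $\alpha=0$ is immediate: the two digamma terms cancel and $D(0)=\ln 2-\frac{1}{2n+1}\geq\ln2-\tfrac13>0$ because $n\geq1$. For $\alpha\in(0,1]$ I would control the digamma difference by integrating $\psi'$. Using the series $\psi'(t)=\sum_{k\geq0}(t+k)^{-2}$ together with the termwise comparison $(t+k)^{-2}<\int_k^{k+1}(t-1+s)^{-2}\,ds$ (valid since the integrand is decreasing) gives the strict bound $\psi'(t)<\frac{1}{t-1}$ for $t>1$. Since the integration variable runs over $[n+\alpha+1,\,n+2\alpha+1]\subset(1,\infty)$, this yields
\[
\psi(n+2\alpha+1)-\psi(n+\alpha+1)=\int_{n+\alpha+1}^{n+2\alpha+1}\psi'(t)\,dt<\ln\frac{n+2\alpha}{n+\alpha},
\]
and hence $D(\alpha)>\ln\dfrac{2(n+\alpha)}{n+2\alpha}-\dfrac{1}{2n+2\alpha+1}$.

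It then remains to verify that this elementary lower bound is positive. Setting $w:=n+2\alpha$ (so that $2n+2\alpha=n+w$ and $2n+2\alpha+1=n+w+1$), the claim becomes $\ln\!\big(1+\tfrac{n}{w}\big)>\frac{1}{n+w+1}$. Applying the standard inequality $\ln(1+x)\geq\frac{x}{1+x}$ with $x=n/w$ reduces this to $\frac{n}{n+w}>\frac{1}{n+w+1}$, i.e. to $n^2+w(n-1)>0$, which holds for every $n\geq1$. This closes the argument. The step I expect to be the main obstacle is precisely this final quantitative comparison: the positive digamma difference must strictly dominate the rational term $\frac{1}{2n+2\alpha+1}$ uniformly in $\alpha$, so a crude bound resting only on the concavity of $\psi$ does not by itself suffice, and it is the sharpness of $\psi'(t)<\frac{1}{t-1}$ that makes the inequality go through.
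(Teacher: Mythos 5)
Your proof is correct, and its core differs genuinely from the paper's. Both arguments start identically: take the logarithmic derivative and reduce to showing $h(\alpha)=\ln 2+\Psi(n+\alpha+1)-\tfrac{1}{2n+2\alpha+1}-\Psi(n+2\alpha+1)>0$ on $[0,1]$. The paper then splits into two cases: for $n\geqslant 2$ it expands the digamma difference as a series, peels off one term, and closes with the crude numerical bound $\ln 2-\tfrac15-\tfrac13>0$; for $n=1$ this fails, so the paper invokes the recurrence, the duplication formula and the integral representation of $\Psi$, plus a substitution, to land on $\tfrac{1}{2(\alpha+1)(2\alpha+3)}>0$. You instead give a single uniform argument for all $n\geqslant 1$: the telescoping comparison $(t+k)^{-2}<\tfrac{1}{(t+k-1)(t+k)}$ yields the sharp trigamma bound $\Psi'(t)<\tfrac{1}{t-1}$ for $t>1$, whence
\begin{equation*}
\Psi(n+2\alpha+1)-\Psi(n+\alpha+1)<\ln\frac{n+2\alpha}{n+\alpha},
\end{equation*}
and the remaining inequality $\ln\bigl(1+\tfrac{n}{w}\bigr)>\tfrac{1}{n+w+1}$ (with $w=n+2\alpha$) follows from $\ln(1+x)\geqslant\tfrac{x}{1+x}$. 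Your closing remark is apt: a linear (mean-value) bound on the digamma difference is insufficient precisely at $n=1$, $\alpha$ near $1$ — which is exactly why the paper was forced into its separate, heavier $n=1$ case — whereas your logarithmic integral bound is sharp enough to absorb it. What your route buys is uniformity and elementarity (no duplication formula, no digamma integral representation); what the paper's route buys is that each of its two cases is individually very short once the relevant special-function identities are accepted.
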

\begin{proof}
Denote
\begin{align*}
g(\alpha) := & \ln \bar{\gamma}_n^{(\alpha,\alpha)} \\
= & (2\alpha+1)\ln2 + 2\ln\Gamma(n+\alpha+1) - \ln(2n+2\alpha+1) - \ln\Gamma(n+2\alpha+1) - \ln n!,
\end{align*}
which is smooth enough with variable $\alpha$. 
We have
\begin{align*}
\frac{{\rm d}}{{\rm d}\alpha}g(\alpha) = 2\ln2 + 2\Psi(n+\alpha+1) - \frac{2}{2n+2\alpha+1} - 2\Psi(n+2\alpha+1).
\end{align*}
Set $h(\alpha) = \frac{1}{2}\frac{{\rm d}}{{\rm d}\alpha}g(\alpha)$, we aim to prove the function $h(\alpha) $ is positive in $\alpha \in [0,1]$.

\textit{Case I: }$n \geqslant 2 $. With the expression 6.4.10 in \cite{AS},
\begin{align}
\label{diffpsi}
\frac{{\rm d}^n}{{\rm d}z^n} \Psi(z) = (-1)^{n+1}n!\sum_{k=0}^{+\infty}(z+k)^{-n-1}, z \neq 0,-1,-2,\cdots,
\end{align}
we have
\begin{align*}
h(\alpha) & = \ln2 - \frac{1}{2n+2\alpha+1} - \sum_{k=0}^{+\infty}\frac{1}{n+\alpha+1+k} + \sum_{k=0}^{+\infty}\frac{1}{n+2\alpha+1+k} \\
& = \ln2 - \frac{1}{2n+2\alpha+1} -\frac{1}{n+\alpha+1} - \sum_{k=0}^{+\infty}\frac{1}{n+\alpha+2+k} + \sum_{k=0}^{+\infty}\frac{1}{n+2\alpha+1+k} \\
& = \ln2 - \frac{1}{2n+2\alpha+1} -\frac{1}{n+\alpha+1} + \sum_{k=0}^{+\infty}\frac{1-\alpha}{(n+2\alpha+1+k)(n+\alpha+2+k)} \\
& \geqslant \ln2 - \frac{1}{5} -\frac{1}{3}>0.
\end{align*}

\textit{Case II: }$n = 1 $. By the expression 6.3.5, 6.3.8, and 6.3.22 in \cite{AS},
\begin{align}
& \Psi(z+1) = \Psi(z) + \frac{1}{z}, \\
& \Psi(2z) = \frac{1}{2}\Psi(z) + \frac{1}{2}\Psi(z+ \frac{1}{2}) + \ln2, \\
& \Psi(z) = \int_0^1 \frac{1-t^{z-1}}{1-t} {\rm d}t - \gamma^*,
\end{align}
where $\gamma^* = 0.577\ 215\ 665\cdots$ is a constant, it appears
\begin{align*}
h(\alpha) & = \ln2 - \frac{1}{3+2\alpha} + \Psi(2+\alpha) - \Psi(2+2\alpha) \\
& = \ln2 - \frac{1}{3+2\alpha} + \Psi(1+\alpha) + \frac{1}{1+\alpha} - \frac{1}{2}\Psi(1+\alpha) - \frac{1}{2}\Psi(\frac{3}{2}+\alpha) - \ln2 \\
& = \frac{1}{1+\alpha} - \frac{1}{3+2\alpha} + \frac{1}{2}\Psi(1+\alpha) - \frac{1}{2}\Psi(\frac{3}{2}+\alpha) \\
& = \frac{1}{1+\alpha} - \frac{1}{3+2\alpha} + \frac{1}{2}\int_0^1 \frac{1-t^{\alpha} - 1 + t^{\alpha + \frac{1}{2}}}{1-t} {\rm d}t \\
& = \frac{1}{1+\alpha} - \frac{1}{3+2\alpha} - \frac{1}{2}\int_0^1 \frac{t^{\alpha}}{1+t^{\frac{1}{2}}} {\rm d}t \\
& := \frac{1}{1+\alpha} - \frac{1}{3+2\alpha} - I.
\end{align*}
Taking $x = t^{\frac{1}{2}}$ and ${\rm d}t = 2x{\rm d}x$, we get the integral $I$
\begin{align*}
I  = \frac{1}{2}\int_0^1 \frac{t^{\alpha}}{1+t^{\frac{1}{2}}} {\rm d}t = \int_0^1 \frac{x^{2\alpha + 1}}{1+x} {\rm d}x < \int_0^1 x^{2\alpha + 1} {\rm d}x = \frac{1}{2\alpha + 2}.
\end{align*}
Then we get
\begin{align*}
h(\alpha) = \frac{1}{1+\alpha} - \frac{1}{3+2\alpha} - I > \frac{1}{1+\alpha} - \frac{1}{3+2\alpha} -\frac{1}{2\alpha + 2} = \frac{1}{2(\alpha+1)(2\alpha+3)} > 0.
\end{align*}
\end{proof}

\begin{lemmaA}
Denote $n \geqslant 1 $ is a positive integer, $0\leqslant \alpha \leqslant \beta \leqslant 1 $. The constant $\bar{\gamma}_n^{(\alpha, \alpha)} $ satisfies the following relationship
\begin{align*}
\bar{\gamma}_n^{(\alpha, \alpha)} \leqslant \bar{\gamma}_n^{(\beta, \beta)} \leqslant 2^{2(\beta - \alpha)}\bar{\gamma}_n^{(\alpha, \alpha)}.
\end{align*}
\end{lemmaA}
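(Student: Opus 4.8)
The first inequality is immediate from the preceding lemma: Lemma \ref{increaselemma} shows $\bar{\gamma}_n^{(\alpha,\alpha)}$ is strictly increasing in $\alpha$, so $\alpha \leqslant \beta$ forces $\bar{\gamma}_n^{(\alpha,\alpha)} \leqslant \bar{\gamma}_n^{(\beta,\beta)}$ with no further work. For the upper bound my plan is to rewrite $\bar{\gamma}_n^{(\beta,\beta)} \leqslant 2^{2(\beta-\alpha)}\bar{\gamma}_n^{(\alpha,\alpha)}$ in the equivalent form $2^{-2\beta}\bar{\gamma}_n^{(\beta,\beta)} \leqslant 2^{-2\alpha}\bar{\gamma}_n^{(\alpha,\alpha)}$, that is, to prove the rescaled quantity $R(\alpha) := 2^{-2\alpha}\bar{\gamma}_n^{(\alpha,\alpha)}$ is non-increasing on $[0,1]$.

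To do so I would differentiate $\ln R$. Because $\ln R(\alpha) = g(\alpha) - 2\alpha\ln 2$, where $g$ is the function from the proof of Lemma \ref{increaselemma}, the $2\ln 2$ contribution cancels and one obtains
\begin{align*}
\frac{{\rm d}}{{\rm d}\alpha}\ln R(\alpha) = 2\Psi(n+\alpha+1) - \frac{2}{2n+2\alpha+1} - 2\Psi(n+2\alpha+1) = 2\bigl(h(\alpha) - \ln 2\bigr),
\end{align*}
with $h$ the very auxiliary function analysed there. Thus the whole problem collapses to the complementary estimate $h(\alpha) \leqslant \ln 2$ on $[0,1]$, which is precisely the mirror image of the lower bound $h(\alpha) > 0$ already established.

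For $n \geqslant 2$ I would reuse the series representation from Lemma \ref{increaselemma},
\begin{align*}
h(\alpha) - \ln 2 = -\frac{1}{2n+2\alpha+1} - \frac{1}{n+\alpha+1} + \sum_{k=0}^{+\infty}\frac{1-\alpha}{(n+2\alpha+1+k)(n+\alpha+2+k)},
\end{align*}
and control the tail by a telescoping majorant: since $n+2\alpha+1+k \geqslant n+\alpha+1+k$ and $1-\alpha \geqslant 0$, the sum is bounded by $(1-\alpha)\sum_{k\geqslant 0}\bigl(\frac{1}{n+\alpha+1+k}-\frac{1}{n+\alpha+2+k}\bigr) = \frac{1-\alpha}{n+\alpha+1}$. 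Substituting gives $h(\alpha)-\ln 2 \leqslant -\frac{1}{2n+2\alpha+1} - \frac{\alpha}{n+\alpha+1} \leqslant 0$, both remaining terms being non-negative on $[0,1]$.

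The remaining case $n=1$ must be handled separately, since the telescoping step has no analogue. Here I would invoke the closed form from Case II of Lemma \ref{increaselemma}, namely $h(\alpha) = \frac{1}{1+\alpha} - \frac{1}{3+2\alpha} - I$ with $I>0$; discarding the positive integral leaves $h(\alpha) < \frac{1}{1+\alpha} - \frac{1}{3+2\alpha}$, an elementary function whose derivative is negative on $[0,1]$, so its maximum is the endpoint value $\frac{2}{3} < \ln 2$. The only real obstacle is conceptual rather than computational: one must notice that both inequalities in the claim are governed by the single digamma quantity $h(\alpha)$ approached from opposite sides, after which each half reduces to a short estimate built directly on the previous lemma.
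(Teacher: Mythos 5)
Your proposal is correct, and its skeleton coincides with the paper's: the first inequality is quoted from Lemma \ref{increaselemma}, and the second is reduced to monotonicity of the rescaled quantity $2^{-2\alpha}\bar{\gamma}_n^{(\alpha,\alpha)}$ (the paper phrases this as the increase of $g(\alpha)=\ln\bigl[(2n+2\alpha+1)\Gamma(n+2\alpha+1)/\Gamma^{2}(n+\alpha+1)\bigr]$, which is the same statement). Where you diverge is in verifying the sign of the derivative, and there you do far more work than is needed. You correctly compute
\begin{align*}
\frac{\rm d}{{\rm d}\alpha}\ln\bigl(2^{-2\alpha}\bar{\gamma}_n^{(\alpha,\alpha)}\bigr)=2\Psi(n+\alpha+1)-\frac{2}{2n+2\alpha+1}-2\Psi(n+2\alpha+1),
\end{align*}
but this expression is already manifestly negative: $\Psi$ is strictly increasing on $(0,+\infty)$ and $n+2\alpha+1\geqslant n+\alpha+1$, so the digamma difference is $\leqslant 0$ and the middle term is strictly negative. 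That one-line observation is precisely the paper's proof, uniform in $n$ with no case split. Instead, you recast the claim as $h(\alpha)\leqslant\ln 2$ for the auxiliary function $h$ of Lemma \ref{increaselemma} and re-run that lemma's entire case machinery: the series representation and telescoping majorant $\frac{1-\alpha}{n+\alpha+1}$ for $n\geqslant 2$, and the digamma identities plus the integral bound $I>0$ together with $\frac{1}{1+\alpha}-\frac{1}{3+2\alpha}\leqslant\frac{2}{3}<\ln 2$ for $n=1$. Each of these steps checks out, so your argument is sound, and as a by-product it yields slightly quantitative control on $\ln 2-h(\alpha)$; but none of it is required, and the case analysis hides the structural reason the inequality holds. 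In short: correct, same reduction as the paper, but you overlooked that your own displayed derivative formula finishes the proof immediately.
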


\begin{proof}
The first inequality is proved by previous lemma. Denote 
\begin{align*}
g(\alpha) = \ln\frac{(2n+2\alpha+1)\Gamma(n+2\alpha+1)}{\Gamma^2(n+\alpha+1)},
\end{align*}
which is a positive function, and smooth in $[0,1]$. Set
\begin{align*}
h(\alpha) = \frac{1}{2}\frac{{\rm d}}{{\rm d}\alpha} g(\alpha) = \frac{1}{2n+2\alpha+1} + \Psi(n+2\alpha+1) - \Psi(n+\alpha+1).
\end{align*}
Since $\Psi(\cdot) $ is strictly increasing in $(0, +\infty) $, so $h(\alpha) > 0$, and the function $g(\alpha)$ is monotonic increasing in $[0,1]$, we have
\begin{align*}
2^{2(\beta - \alpha)}\frac{\bar{\gamma}_n^{(\alpha, \alpha)}}{\bar{\gamma}_n^{(\beta, \beta)}} = \frac{2n+2\beta+1}{2n+2\alpha+1}\cdot\frac{\Gamma^2(n+\alpha+1)\Gamma(n+2\beta+1)}{\Gamma^2(n+\beta+1)\Gamma(n+2\alpha+1)} = \frac{g(\beta)}{g(\alpha)} \geqslant 1,
\end{align*}
which get the conclusion.

\end{proof}

We here give the following proposition.
\begin{prop}
\label{prop1}
The constant 
\begin{align*}
\bar{C}_{ij}^{(\alpha,\beta,\gamma,\mu )} & = {\bar{\gamma}_{i-1}^{(\sigma,\sigma)}\bar{\gamma}_{j-1}^{(s-\frac{\alpha}{2},s-\frac{\alpha}{2})} - \bar{\gamma}_{i-1}^{(\sigma-\frac{\beta}{2},\sigma-\frac{\beta}{2})}\bar{\gamma}_{j-1}^{(s,s)} - \varepsilon\cdot\bar{\gamma}_{i-1}^{(\sigma-\frac{\mu}{2},\sigma-\frac{\mu}{2})}\bar{\gamma}_{j-1}^{(s-\frac{\gamma}{2},s-\frac{\gamma}{2})}+\bar{\gamma}_{i-1}^{(\sigma,\sigma)}\bar{\gamma}_{j-1}^{(s,s)}}
\end{align*}
is positive while $\varepsilon \in [0, \min\{2^{\gamma - \alpha}, 1\}] $, and $i,j = 2,3,\cdots$.
\end{prop}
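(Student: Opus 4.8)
The plan is to regroup $\bar{C}_{ij}^{(\alpha,\beta,\gamma,\mu)}$ into two brackets, pairing each positive term with one negative term, and to show that one bracket is strictly positive and the other nonnegative. Writing $n:=i-1\geq 1$, $m:=j-1\geq 1$ (legitimate since $i,j\geq 2$, matching the hypothesis $n\geq 1$ of the auxiliary lemmas) and abbreviating $\bar{\gamma}_\ell^{(a)}:=\bar{\gamma}_\ell^{(a,a)}$, I would write
\[
\bar{C}_{ij}^{(\alpha,\beta,\gamma,\mu)} = \left(\bar{\gamma}_n^{(\sigma)} - \bar{\gamma}_n^{(\sigma-\beta/2)}\right)\bar{\gamma}_m^{(s)} + \left(\bar{\gamma}_n^{(\sigma)}\bar{\gamma}_m^{(s-\alpha/2)} - \varepsilon\,\bar{\gamma}_n^{(\sigma-\mu/2)}\bar{\gamma}_m^{(s-\gamma/2)}\right).
\]
Before estimating, I would record that every superscript appearing here lies in $[0,1)$: since $\alpha,\gamma\in(0,1)$ and $\beta,\mu\in(1,2)$, we have $\sigma\in(1/2,1)$ and $s\in(0,1/2)$, while $\sigma-\beta/2=\max\{0,(\mu-\beta)/2\}$, $\sigma-\mu/2=\max\{0,(\beta-\mu)/2\}$, $s-\alpha/2=\max\{0,(\gamma-\alpha)/2\}$ and $s-\gamma/2=\max\{0,(\alpha-\gamma)/2\}$ all lie in $[0,1/2)$. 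Hence both Lemma \ref{increaselemma} (strict monotonicity of $\bar{\gamma}_\ell^{(a,a)}$ in $a$) and the subsequent sandwich lemma $\bar{\gamma}_\ell^{(a,a)}\leq\bar{\gamma}_\ell^{(b,b)}\leq 2^{2(b-a)}\bar{\gamma}_\ell^{(a,a)}$ (for $0\leq a\leq b\leq 1$) apply to all indices in play.

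The first bracket is strictly positive: since $\beta/2>0$ we have $\sigma>\sigma-\beta/2$, so strict monotonicity gives $\bar{\gamma}_n^{(\sigma)}>\bar{\gamma}_n^{(\sigma-\beta/2)}$, and $\bar{\gamma}_m^{(s)}>0$. It therefore remains only to show that the second bracket is nonnegative, i.e.
\[
\varepsilon\,\bar{\gamma}_n^{(\sigma-\mu/2)}\,\bar{\gamma}_m^{(s-\gamma/2)} \leq \bar{\gamma}_n^{(\sigma)}\,\bar{\gamma}_m^{(s-\alpha/2)}.
\]
For the spatial factor (index $n$) this is immediate from $\sigma-\mu/2\leq\sigma$ and monotonicity, giving $\bar{\gamma}_n^{(\sigma-\mu/2)}\leq\bar{\gamma}_n^{(\sigma)}$. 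The temporal factor (index $m$) is the crux, and is exactly where the hypothesis $\varepsilon\leq\min\{2^{\gamma-\alpha},1\}$ is consumed, because $s-\gamma/2$ and $s-\alpha/2$ may occur in either order.

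I would therefore split into two cases on the sign of $\gamma-\alpha$. If $\alpha<\gamma$, then $s=\gamma/2$, so $s-\gamma/2=0\leq s-\alpha/2$, and monotonicity yields $\bar{\gamma}_m^{(s-\gamma/2)}\leq\bar{\gamma}_m^{(s-\alpha/2)}$; combined with $\varepsilon\leq 1$ this closes the estimate. If $\alpha\geq\gamma$, then $s=\alpha/2$, so $s-\alpha/2=0$ and $s-\gamma/2=(\alpha-\gamma)/2$; here I invoke the \emph{upper} half of the sandwich lemma, $\bar{\gamma}_m^{((\alpha-\gamma)/2)}\leq 2^{\alpha-\gamma}\bar{\gamma}_m^{(0)}$, so that $\varepsilon\,\bar{\gamma}_m^{(s-\gamma/2)}\leq \varepsilon\,2^{\alpha-\gamma}\,\bar{\gamma}_m^{(s-\alpha/2)}$, and the prefactor $\varepsilon\,2^{\alpha-\gamma}\leq 1$ is precisely what $\varepsilon\leq 2^{\gamma-\alpha}$ delivers. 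In either case the second bracket is nonnegative, and adding the strictly positive first bracket gives $\bar{C}_{ij}^{(\alpha,\beta,\gamma,\mu)}>0$. The main obstacle is this temporal factor of the viscosity term: one must match the correct half of the sandwich lemma to the sign of $\gamma-\alpha$, and observe that the two components of the admissible bound $\min\{2^{\gamma-\alpha},1\}$ are tailored exactly to the two cases $\alpha\geq\gamma$ and $\alpha<\gamma$, respectively.
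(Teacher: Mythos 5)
Your proof is correct and takes essentially the same route as the paper's: both arguments replace the spatial factors $\bar{\gamma}_{i-1}^{(\sigma-\beta/2,\sigma-\beta/2)}$ and $\bar{\gamma}_{i-1}^{(\sigma-\mu/2,\sigma-\mu/2)}$ by $\bar{\gamma}_{i-1}^{(\sigma,\sigma)}$ via the monotonicity lemma (the first replacement strictly), and then reduce everything to the temporal comparison $\bar{\gamma}_{j-1}^{(s-\alpha/2,s-\alpha/2)} \geqslant \varepsilon\,\bar{\gamma}_{j-1}^{(s-\gamma/2,s-\gamma/2)}$, handled by the same two cases on the sign of $\gamma-\alpha$ (monotonicity with $\varepsilon\leqslant 1$ in one, the sandwich lemma with $\varepsilon\leqslant 2^{\gamma-\alpha}$ in the other). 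Your two-bracket regrouping is only a cosmetic variant of the paper's step of factoring out $\bar{\gamma}_{i-1}^{(\sigma,\sigma)}$ and letting the $\pm\bar{\gamma}_{j-1}^{(s,s)}$ terms cancel.
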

\begin{proof}
The proof could be divided by two parts.

\textit{Case I:} $\gamma < \alpha$, we have
\begin{align*}
\bar{C}_{ij}^{(\alpha,\beta,\gamma,\mu )} & = {\bar{\gamma}_{i-1}^{(\sigma,\sigma)}\bar{\gamma}_{j-1}^{(s-\frac{\alpha}{2},s-\frac{\alpha}{2})} - \bar{\gamma}_{i-1}^{(\sigma-\frac{\beta}{2},\sigma-\frac{\beta}{2})}\bar{\gamma}_{j-1}^{(s,s)} - \varepsilon\cdot\bar{\gamma}_{i-1}^{(\sigma-\frac{\mu}{2},\sigma-\frac{\mu}{2})}\bar{\gamma}_{j-1}^{(s-\frac{\gamma}{2},s-\frac{\gamma}{2})}+\bar{\gamma}_{i-1}^{(\sigma,\sigma)}\bar{\gamma}_{j-1}^{(s,s)}} \\
& > \bar{\gamma}_{i-1}^{(\sigma,\sigma)}[\bar{\gamma}_{j-1}^{(s-\frac{\alpha}{2},s-\frac{\alpha}{2})} - \bar{\gamma}_{j-1}^{(s,s)} - 2^{\gamma - \alpha} \bar{\gamma}_{j-1}^{(s-\frac{\gamma}{2},s-\frac{\gamma}{2})}+\bar{\gamma}_{j-1}^{(s,s)}] \geqslant 0.
\end{align*}

\textit{Case II:} $\gamma \geqslant \alpha$, we get
\begin{align*}
\bar{C}_{ij}^{(\alpha,\beta,\gamma,\mu )} & = {\bar{\gamma}_{i-1}^{(\sigma,\sigma)}\bar{\gamma}_{j-1}^{(s-\frac{\alpha}{2},s-\frac{\alpha}{2})} - \bar{\gamma}_{i-1}^{(\sigma-\frac{\beta}{2},\sigma-\frac{\beta}{2})}\bar{\gamma}_{j-1}^{(s,s)} - \varepsilon\cdot\bar{\gamma}_{i-1}^{(\sigma-\frac{\mu}{2},\sigma-\frac{\mu}{2})}\bar{\gamma}_{j-1}^{(s-\frac{\gamma}{2},s-\frac{\gamma}{2})}+\bar{\gamma}_{i-1}^{(\sigma,\sigma)}\bar{\gamma}_{j-1}^{(s,s)}} \\
& > \bar{\gamma}_{i-1}^{(\sigma,\sigma)}[\bar{\gamma}_{j-1}^{(s-\frac{\alpha}{2},s-\frac{\alpha}{2})} - \bar{\gamma}_{j-1}^{(s,s)} - \bar{\gamma}_{j-1}^{(s-\frac{\gamma}{2},s-\frac{\gamma}{2})}+\bar{\gamma}_{j-1}^{(s,s)}] \geqslant 0.
\end{align*}
\end{proof}

From now on, we start to prove the Theorem \ref{thm3-1}.
\begin{proof}
We can prove this theorem by the Babu\v{s}ka Lax-Milgram lemma. 

\textit{(i) Continuity.}
We start with the fact that the bilinear 
$ {\mathscr{A}}_\varepsilon ^{( \alpha ,\beta ,\gamma ,\mu ) }( u,v ) $ is continuous on ${_0B^{\frac{\alpha }{2},\frac{\beta }{2},\frac{\gamma }{2},\frac{\mu }{2}}}( Q_T )$ $ \times {{}^0B^{\frac{\alpha }{2},\frac{\beta }{2},\frac{\gamma }{2},\frac{\mu }{2}}}( Q_T ) $. Hereafter, we use H\"{o}lder's equality to achieve this part.
\begin{align*}
\begin{aligned}
\left|\mathscr{A}_\varepsilon ^{(\alpha ,\beta ,\gamma ,\mu )}( {u,v} )\right| & = \left|{( {_0\partial _t^{\frac{\alpha }{2}}u,{_t}\partial _T^{\frac{\alpha }{2}}v} )_{Q_T}} - {( {_{ - 1}\partial _x^{\frac{\beta }{2}}u,{_x}\partial _1^{\frac{\beta }{2}}v} )_{Q_T}} - \varepsilon  \cdot {( {_0\partial _t^{\frac{\gamma }{2}}{_{ - 1}}\partial _x^{\frac{\mu }{2}}u,{_t}\partial _T^{\frac{\gamma }{2}}{_x}\partial _1^{\frac{\mu }{2}}v} )_{Q_T}}+ {( {u,v} ) }_{Q_T}\right| \\
& \lesssim {\left\| u \right\|_{{H^{\frac{\alpha }{2}}}( {I;{L^2}( \Lambda  )} )}}{\left\| v \right\|_{{H^{\frac{\alpha }{2}}}( {I;{L^2}( \Lambda  )} )}} + {\left\| u \right\|_{{L^2}( {I;H_0^{\frac{\beta }{2}}( \Lambda  )} )}}{\left\| v \right\|_{{L^2}( {I;H_0^{\frac{\beta }{2}}( \Lambda  )} )}}\\
&\;\;\;\;\;\;\;\;\;\;\; + \varepsilon  \cdot {\left\| u \right\|_{{H^{\frac{\gamma }{2}}}( {I;H_0^{\frac{\mu }{2}}( \Lambda  )} )}}{\left\| v \right\|_{{H^{\frac{\gamma }{2}}}( {I;H_0^{\frac{\mu }{2}}( \Lambda  )} )}} + {\left\| u \right\|_{{L^{2}}( {I;{L^2}( \Lambda  )} )}}{\left\| v \right\|_{{L^{2}}( {I;{L^2}( \Lambda  )} )}}\\
& \lesssim {\left\| u \right\|_{{B^{\frac{\alpha }{2},\frac{\beta }{2},\frac{\gamma }{2},\frac{\mu }{2}}}( Q_T )}}{\left\| v \right\|_{{B^{\frac{\alpha }{2},\frac{\beta }{2},\frac{\gamma }{2},\frac{\mu }{2}}}( Q_T )}}.
\end{aligned}
\end{align*}

\textit{(ii) Inf-sup condition.}
For this part, we say the bilinear $ \mathscr{A}_\varepsilon ^{(\alpha ,\beta ,\gamma ,\mu ) }( u,v ) $ satisfies the inf-sup condition, that is, for any $0 \neq u \in {{_0B^{\frac{\alpha }{2},\frac{\beta }{2},\frac{\gamma }{2},\frac{\mu }{2}}}( Q_T )}$,
\begin{align}
\label{inf-sup}
\sup_{0 \neq v \in {{{}^0B^{\frac{\alpha }{2},\frac{\beta }{2},\frac{\gamma }{2},\frac{\mu }{2}}}( Q_T )}}
\frac{\left| \mathscr{A}_\varepsilon ^{(\alpha ,\beta ,\gamma ,\mu ) }( {u,v} )  \right|}{{\left\| u \right\|}
_{B^{\frac{\alpha }{2},\frac{\beta }{2},\frac{\gamma }{2},\frac{\mu }{2}}( Q_T )}{\left\| v \right\|}_{B^{\frac{\alpha }{2},\frac{\beta }{2},\frac{\gamma }{2},\frac{\mu }{2}}( Q_T )}} \geqslant \bar{\eta} > 0.
\end{align}

For this purpose, we construct $u \in {_0B^{\frac{\alpha }{2},\frac{\beta }{2},\frac{\gamma }{2},\frac{\mu }{2}}}( Q_T )  $ with the expansion
\begin{align*}
u(x,t) = \sum^{\infty}_{i=1}\sum^{\infty}_{j=1}u_{ij}\varphi_i(x)\psi_j(t).
\end{align*}
%where the condition \eqref{weakformulationcondition} shows that $u_{i1} = u_{1j} = 0, i,j = 1,2,\cdots $. 
Hence, for $0 \leqslant r\leqslant s, \ 0 \leqslant \rho \leqslant \sigma $, we get the expansion of solution $u$ with fractional-order operator as 
\begin{align*}
\begin{aligned}
_0\partial^r_t{}_{-1}\partial^\rho_xu(x,t) = &  \sum^{\infty}_{i=1}\sum^{\infty}_{j=1}u_{ij}\cdot{}_{-1}\partial^{\rho}_x\varphi_i(x)\;{}_0\partial^r_t\psi_j(t) \\
 = & \sum^{\infty}_{i=1}\sum^{\infty}_{j=1}u_{ij}\frac{\Gamma(i)}{\Gamma(i+\sigma-\rho)} {(1+x)}^{\sigma-\rho}\\
&\ \ \ \ \ \cdot \left[{P_{i-1}^{({-\sigma+\rho,\sigma-\rho})}(x)} - \frac{i({i+\sigma})}{(i+\sigma-\rho)(i-\sigma)}P_{i}^{({-\sigma+\rho,\sigma-\rho})}(x)\right] \\
&\ \ \ \ \ \cdot \frac{\Gamma(j)}{\Gamma(j+s-r)}{\left(j-\frac{1}{2}\right)}^{\frac{1}{2}}{\left(\frac{2}{T}\right)}^{s-r-\frac{1}{2}}t^{s-r}{P_{j-1}^{({-s+r,s-r})}\left(\frac{2t}{T}-1\right).}
\end{aligned}
\end{align*}
Suppose $u_{0j} = 0, j = 1,2,\cdots$, we have
\begin{align*}
_0\partial^r_t{}_{-1}\partial^\rho_xu(x,t) := \sum^{\infty}_{i=2}\sum^{\infty}_{j=2}{\hat{u}}_{ij}{(1+x)}^{\sigma-\rho}{P_{i-1}^{({-\sigma+\rho,\sigma-\rho})}(x)}{\left(\frac{2}{T}\right)}^{s-r-\frac{1}{2}}t^{s-r}{P_{j-1}^{({-s+r,s-r})}\left(\frac{2t}{T}-1\right),}
\end{align*}
where
\begin{align*}
{\hat{u}}_{ij} = \frac{\Gamma(i)}{\Gamma(i+\sigma-\rho)}\frac{\Gamma(j)}{\Gamma(j+s-r)}{\left(j-\frac{1}{2}\right)}^{\frac{1}{2}}\left({u_{ij} - \frac{i+\sigma}{i-\sigma}u_{i-1,j}}\right),\ \ \ i,j = 2,3,\cdots.
\end{align*}
Similarly, we could get the expansion of test function $v$ in the same way, that is,
\begin{align*}
\begin{aligned}
_t\partial^r_T{}_{x}\partial^\rho_{1}v(x,t) = &  \sum^{\infty}_{m=1}\sum^{\infty}_{n=1}v_{mn}\cdot{}_{x}\partial^{\rho}_1{\bar \varphi}_m(x)\;{}_t\partial^r_T{\bar \psi}_n(t) \\
 = & \sum^{\infty}_{m=1}\sum^{\infty}_{n=1}v_{mn}\frac{\Gamma(m)}{\Gamma(m+\sigma-\rho)} {(1-x)}^{\sigma-\rho}\\
&\ \ \ \ \ \cdot \left[{P_{m-1}^{({\sigma-\rho,-\sigma+\rho})}(x)} + \frac{m({m+\sigma})}{(m+\sigma-\rho)(m-\sigma)}P_{m}^{({\sigma-\rho,-\sigma+\rho})}(x)\right] \\
&\ \ \ \ \ \cdot \frac{\Gamma(n)}{\Gamma(n+s-r)}{\left(n-\frac{1}{2}\right)}^{\frac{1}{2}}{\left(\frac{2}{T}\right)}^{s-r-\frac{1}{2}}{(T-t)}^{s-r}{P_{n-1}^{({s-r,-s+r})}\left(\frac{2t}{T}-1\right)} \\
:= & \sum^{\infty}_{m=1}\sum^{\infty}_{n=1}{\hat{v}}_{mn}{(1-x)}^{\sigma-\rho}{P_{m-1}^{({\sigma-\rho,-\sigma+\rho})}(x)}{\left(\frac{2}{T}\right)}^{s-r-\frac{1}{2}}{\left(T-t\right)}^{s-r}{P_{n-1}^{({s-r,-s+r})}\left(\frac{2t}{T}-1\right)}.
\end{aligned}
\end{align*}
For the sake of convenience, denote $v_{0n} = 0, \ n=1,2,\cdots$, which means
\begin{align*}
{\hat{v}}_{mn} = \frac{\Gamma(m)}{\Gamma(m+\sigma-\rho)}\frac{\Gamma(n)}{\Gamma(n+s-r)}{\left(n-\frac{1}{2}\right)}^{\frac{1}{2}}\left({v_{mn} + \frac{m+\sigma}{m-\sigma}v_{m-1,n}}\right),\ \ \ m,n = 1,2,\cdots.
\end{align*}

To calculate the norm and bilinear form $\mathscr{A}_\varepsilon ^{(\alpha ,\beta ,\gamma ,\mu )}( {u,v} )$ expediently, it is better to transform the parameters of basis functions in Jacobi polynomials with different demands as follows:
\begin{align}
\label{uexpa1}
_0\partial^r_t{}_{-1}\partial^\rho_xu(x,t) = & \sum^{\infty}_{i=1}\sum^{\infty}_{j=1}{\hat{u}}_{ij}{(1+x)}^{\sigma-\rho}{P_{i-1}^{({-\sigma+\rho,\sigma-\rho})}(x)}{\left(\frac{2}{T}\right)}^{s-r-\frac{1}{2}}t^{s-r}{P_{j-1}^{({-s+r,s-r})}\left(\frac{2t}{T}-1\right)} \\
\label{uexpa2}
= & \sum^{\infty}_{i=1}\sum^{\infty}_{j=1}{\bar{u}}_{ij}{(1+x)}^{\sigma-\rho}{P_{i-1}^{({\sigma-\rho,\sigma-\rho})}(x)}{\left(\frac{2}{T}\right)}^{s-r-\frac{1}{2}}t^{s-r}{P_{j-1}^{({s-r,s-r})}\left(\frac{2t}{T}-1\right)} \\
\label{uexpa3}
= & \sum^{\infty}_{i=1}\sum^{\infty}_{j=1}{\tilde{u}}_{ij}{(1+x)}^{\sigma-\rho}{P_{i-1}^{({0,2(\sigma-\rho)})}(x)}{\left(\frac{2}{T}\right)}^{s-r-\frac{1}{2}}t^{s-r}{P_{j-1}^{({0,2(s-r)})}\left(\frac{2t}{T}-1\right)},\\
\label{vexpa1}
_t\partial^r_T{}_{x}\partial^\rho_{1}v(x,t) = & \sum^{\infty}_{m=1}\sum^{\infty}_{n=1}{\hat{v}}_{mn}{(1-x)}^{\sigma-\rho}{P_{m-1}^{({\sigma-\rho,-\sigma+\rho})}(x)}{\left(\frac{2}{T}\right)}^{s-r-\frac{1}{2}}{(T-t)}^{s-r}{P_{n-1}^{({s-r,-s+r})}\left(\frac{2t}{T}-1\right)} \\
\label{vexpa2}
= & \sum^{\infty}_{m=1}\sum^{\infty}_{n=1}{\bar{v}}_{mn}{(1-x)}^{\sigma-\rho}{P_{m-1}^{({\sigma-\rho,\sigma-\rho})}(x)}{\left(\frac{2}{T}\right)}^{s-r-\frac{1}{2}}{(T-t)}^{s-r}{P_{n-1}^{({s-r,s-r})}\left(\frac{2t}{T}-1\right)} \\
\label{vexpa3}
= & \sum^{\infty}_{m=1}\sum^{\infty}_{n=1}{\tilde{v}}_{mn}{(1-x)}^{\sigma-\rho}{P_{m-1}^{({2(\sigma-\rho),0})}(x)}{\left(\frac{2}{T}\right)}^{s-r-\frac{1}{2}}{(T-t)}^{s-r}{P_{n-1}^{({2(s-r),0})}\left(\frac{2t}{T}-1\right)}.
\end{align}
{\color{blue}
Denote the special test function 
\begin{align}
v^*(x,t) = & \sum^{M}_{m=2}\sum^{N}_{n=2}\frac{1}{\bar{C}_{mn}^{(\alpha,\beta,\gamma,\mu )}} {\bar{v}}_{mn}{(1-x)}^{\sigma}{P_{m-1}^{({\sigma,\sigma})}(x)}{\left(\frac{2}{T}\right)}^{s-\frac{1}{2}}{(T-t)}^{s}{P_{n-1}^{({s,s})}\left(\frac{2t}{T}-1\right)} \\
= & \sum^{M}_{m=1}\sum^{N}_{n=1}{\hat{v}}_{mn}{(1-x)}^{\sigma}{P_{m-1}^{({\sigma,-\sigma})}(x)}{\left(\frac{2}{T}\right)}^{s-\frac{1}{2}}{(T-t)}^{s}{P_{n-1}^{({s,-s})}\left(\frac{2t}{T}-1\right)}, \\
= & \sum^{M}_{m=1}\sum^{N}_{n=1}{\tilde{v}}_{mn}{(1-x)}^{\sigma}{P_{m-1}^{({2\sigma,0})}(x)}{\left(\frac{2}{T}\right)}^{s-\frac{1}{2}}{(T-t)}^{s}{P_{n-1}^{({2s,0})}\left(\frac{2t}{T}-1\right)},
\end{align}
where ${\bar{v}}_{mn} = {\bar{u}}_{mn}$, the constant $\bar{C}_{ij}^{(\alpha,\beta,\gamma,\mu )}$ (assume the constant is not zero, if so, the term will vanish in calculating the bilinear form) is as follows
\begin{align*}
\bar{C}_{ij}^{(\alpha,\beta,\gamma,\mu )} = \bar{\gamma}_{i-1}^{(\sigma,\sigma)}\bar{\gamma}_{j-1}^{(s-\frac{\alpha}{2},s-\frac{\alpha}{2})} - \bar{\gamma}_{i-1}^{(\sigma-\frac{\beta}{2},\sigma-\frac{\beta}{2})}\bar{\gamma}_{j-1}^{(s,s)} - \varepsilon\cdot\bar{\gamma}_{i-1}^{(\sigma-\frac{\mu}{2},\sigma-\frac{\mu}{2})}\bar{\gamma}_{j-1}^{(s-\frac{\gamma}{2},s-\frac{\gamma}{2})}+\bar{\gamma}_{i-1}^{(\sigma,\sigma)}\bar{\gamma}_{j-1}^{(s,s)}
\end{align*}
%$ {\hat{v}}_{mn} = {\hat{u}}_{mn}, {\tilde{v}}_{mn} = {\tilde{u}}_{mn}$, 
and the function $v^* \rightarrow 0$ under either of the following conditions: $t \rightarrow T, x \rightarrow -1$, or $x \rightarrow 1$.} 
Furthermore, we can get the scheme with multipled fractional derivatives
\begin{align}
_0\partial^r_t{}_{-1}\partial^\rho_xu_L(x,t) = & \sum^{M}_{i=1}\sum^{N}_{j=1}{\hat{u}}_{ij}{(1+x)}^{\sigma-\rho}{P_{i-1}^{({-\sigma+\rho,\sigma-\rho})}(x)}{\left(\frac{2}{T}\right)}^{s-r-\frac{1}{2}}t^{s-r}{P_{j-1}^{({-s+r,s-r})}\left(\frac{2t}{T}-1\right)}, \\
_t\partial^r_T{}_{x}\partial^\rho_{1}v^*(x,t) = & \sum^{M}_{m=1}\sum^{N}_{n=1}{\hat{v}}_{mn}{(1-x)}^{\sigma-\rho}{P_{m-1}^{({\sigma-\rho,-\sigma+\rho})}(x)}{\left(\frac{2}{T}\right)}^{s-r-\frac{1}{2}}{(T-t)}^{s-r}{P_{n-1}^{({s-r,-s+r})}\left(\frac{2t}{T}-1\right)},
\end{align}
$ (\bar{u}_{ij}, \ \tilde{u}_{ij}, \ \bar{u}_{mn}, \ \tilde{u}_{mn} \ resp.)$.
Thanks to the equivalence of norms with different weights in finite dimensions {\color{blue} (by $u_L$ and $v^*$)}, we can get the relationship of coefficients 
\begin{align}
\label{coefuequi}
\sum^{M}_{i=1}\sum^{N}_{j=1}{\bar{u}}^2_{ij} \cong \sum^{M}_{i=1} & \sum^{N}_{j=1}{\hat{u}}^2_{ij} \cong \sum^{M}_{i=1}\sum^{N}_{j=1}{\tilde{u}}^2_{ij}, \\
\label{coefvequi}
\sum^{M}_{m=2}\sum^{N}_{n=2}{\bar{u}}^2_{mn} \cong \sum^{M}_{m=1} & \sum^{N}_{n=1}{\hat{v}}^2_{mn} \cong \sum^{M}_{m=1}\sum^{N}_{n=1}{\tilde{v}}^2_{mn}.
\end{align}
Due to the fact that
\begin{align*}
\sum^{M}_{i=2}\sum^{N}_{j=2}{\bar{u}}^2_{ij} & \leqslant \sum^{M}_{i=1}\sum^{N}_{j=1}{\bar{u}}^2_{ij} \\
& \leqslant \left(1+ \left(\sum^{M}_{i=2}\sum^{N}_{j=2}{\bar{u}}^2_{ij}\right)^{-1}\left(\sum_{j=1}^N\bar{u}_{1j}^2+\sum_{i=2}^M\bar{u}_{i1}^2\right) \right)\sum^{M}_{i=2}\sum^{N}_{j=2}{\bar{u}}^2_{ij} \\
& \lesssim \sum^{M}_{i=2}\sum^{N}_{j=2}{\bar{u}}^2_{ij} ,
\end{align*}
we have
\begin{align*}
\sum^{M}_{i=1}\sum^{N}_{j=1}{\bar{u}}^2_{ij} \cong \sum^{M}_{m=2}\sum^{N}_{n=2}{\bar{u}}^2_{mn}.
\end{align*}
On the other hand, we could calculate the bilinear form {\color{blue} (by orthogonality of trial and test functions)}
\begin{align}
\label{innerproduct}
{({_0\partial^r_t{}_{-1}\partial^\rho_xu_L},{_t\partial^r_T{}_{x}\partial^\rho_{1}v^* })}_{Q_T} & =\sum^{M}_{i=2}\sum^{N}_{j=2}\frac{1}{\bar{C}_{ij}^{(\alpha,\beta,\gamma,\mu )}}\bar{u}_{ij}^2\bar{\gamma}_{i-1}^{(\sigma-\rho,\sigma-\rho)}\bar{\gamma}_{j-1}^{(s-r,s-r)}, 
\end{align}
and the $L^2$-norm of expansions of functions $u,v^*$ with fractional-order operator
\begin{align}
\label{unorm}
& {\left\| {_0\partial^r_t{}_{-1}\partial^\rho_xu_L}\right\|}_{L^2(Q_T)}^2 = \sum^{M}_{i=1}\sum^{N}_{j=1}{\tilde{u}}^2_{ij}{{\bar{\gamma}}_{i-1}^{(0,2(\sigma-\rho))}}{{\bar{\gamma}}_{j-1}^{(0,2(s-r))}}, \\
\label{vnorm}
& {\left\| {_t\partial^r_T{}_{x}\partial^\rho_{1}v^*}\right\|}_{L^2(Q_T)}^2 = \sum^{M}_{m=1}\sum^{N}_{n=1}{\tilde{v}}^2_{mn}{{\bar{\gamma}}_{m-1}^{(2(\sigma-\rho),0)}}{{\bar{\gamma}}_{n-1}^{(2(s-r),0)}}. 
\end{align}
By formula \eqref{vnorm},  it performs that
\begin{align*}
& {\left\| {_t\partial^{\frac{\alpha}{2}}_Tv^*}\right\|}_{L^2(Q_T)}^2 = \sum^{M}_{m=1}\sum^{N}_{n=1}{\tilde{v}}^2_{mn}{{\bar{\gamma}}_{m-1}^{(2\sigma,0)}}{{\bar{\gamma}}_{n-1}^{(2s-\alpha,0)}},  \\
& {\left\| {{}_x\partial^{\frac{\beta}{2}}_{1}v^*}\right\|}_{L^2(Q_T)}^2 = \sum^{M}_{m=1}\sum^{N}_{n=1}{\tilde{v}}^2_{mn}{{\bar{\gamma}}_{m-1}^{(2\sigma-\beta,0)}}{{\bar{\gamma}}_{n-1}^{(2s,0)}}, \\
& {\left\| {_t\partial^{\frac{\gamma}{2}}_T{}_{x}\partial^{\frac{\mu}{2}}_{1}v^*}\right\|}_{L^2(Q_T)}^2 = \sum^{M}_{m=1}\sum^{N}_{n=1}{\tilde{v}}^2_{mn}{{\bar{\gamma}}_{m-1}^{(2\sigma-\mu,0)}}{{\bar{\gamma}}_{n-1}^{(2s-\gamma,0)}}. 
\end{align*}
We see all of the above norms are positive and bounded, so we can verify that $v^* \in {}^0B^{\frac{\alpha}{2}, \frac{\beta}{2}, \frac{\gamma}{2}, \frac{\mu}{2}} (Q_T)$.

Based on formulas \eqref{innerproduct}-\eqref{vnorm}, we arrive at
\begin{align*}
{\left\| {u_L}\right\|}^2_{B^{\frac{\alpha }{2},\frac{\beta }{2},\frac{\gamma }{2},\frac{\mu }{2}}( Q_T ) } & = {\left\| {u_L}\right\|}^2_{H^{\frac{\alpha}{2}}(I;L^2(\Lambda))} + {\left\| {u_L}\right\|}^2_{L^{2}(I;H^{\frac{\beta}{2}}(\Lambda))} + {\left\| {u_L}\right\|}^2_{H^{\frac{\gamma}{2}}(I;H^{\frac{\mu}{2}}(\Lambda))} \\
& = \sum^{M}_{i=1}\sum^{N}_{j=1}{\tilde{u}}^2_{ij}\tilde{C}_{1,ij}^{(\alpha,\beta,\gamma,\mu )},\\
{\left\| {v^*}\right\|}^2_{B^{\frac{\alpha }{2},\frac{\beta }{2},\frac{\gamma }{2},\frac{\mu }{2}}( Q_T ) } & = {\left\| {v^*}\right\|}^2_{H^{\frac{\alpha}{2}}(I;L^2(\Lambda))} + {\left\| {v^*}\right\|}^2_{L^{2}(I;H^{\frac{\beta}{2}}(\Lambda))} + {\left\| {v^*}\right\|}^2_{H^{\frac{\gamma}{2}}(I;H^{\frac{\mu}{2}}(\Lambda))} \\
& = \sum^{M}_{i=1}\sum^{N}_{j=1}{\tilde{v}}^2_{ij}\tilde{C}_{2,ij}^{(\alpha,\beta,\gamma,\mu )},
\end{align*}
where
\begin{align*}
\tilde{C}_{1,ij}^{(\alpha,\beta,\gamma,\mu )} & = {{\bar{\gamma}}_{i-1}^{(0,2\sigma)}}{{\bar{\gamma}}_{j-1}^{(0,2s-\alpha)}} + {{\bar{\gamma}}_{i-1}^{(0,2\sigma-\beta)}}{{\bar{\gamma}}_{j-1}^{(0,2s)}} + {{\bar{\gamma}}_{i-1}^{(0,2\sigma-\mu)}}{{\bar{\gamma}}_{j-1}^{(0,2s-\gamma)}}, \\
\tilde{C}_{2,ij}^{(\alpha,\beta,\gamma,\mu )} & = {{\bar{\gamma}}_{i-1}^{(2\sigma,0)}}{{\bar{\gamma}}_{j-1}^{(2s-\alpha,0)}} + {{\bar{\gamma}}_{i-1}^{(2\sigma-\beta,0)}}{{\bar{\gamma}}_{j-1}^{(2s,0)}} + {{\bar{\gamma}}_{i-1}^{(2\sigma-\mu,0)}}{{\bar{\gamma}}_{j-1}^{(2s-\gamma,0)}},
\end{align*}
and the bilinear form
\begin{align*}
\mathscr{A}_\varepsilon ^{(\alpha ,\beta ,\gamma ,\mu ) }( {u_L,v^*} ) = & {( {_0\partial _t^{\frac{\alpha }{2}}u_L,{_t}\partial _T^{\frac{\alpha }{2}}v^*} )_{Q_T}} - {( {_{ - 1}\partial _x^{\frac{\beta }{2}}u_L,{_x}\partial _1^{\frac{\beta }{2}}v^*} )_{Q_T}} \\
& \ \ \ \ - \varepsilon  \cdot {( {_0\partial _t^{\frac{\gamma }{2}}{_{ - 1}}\partial _x^{\frac{\mu }{2}}u_L,{_t}\partial _T^{\frac{\gamma }{2}}{_x}\partial _1^{\frac{\mu }{2}}v^*} )_{Q_T}} + {( {u_L,v^*} ) }_{Q_T} \\
= & \sum^{M}_{i=2}\sum^{N}_{j=2}{\bar{u}}_{ij}^2.
\end{align*}
Denote
\begin{align*}
\tilde{C} = \max_{1 \leqslant i \leqslant M, 1 \leqslant j \leqslant N}\left\lbrace \tilde{C}_{1,ij}^{(\alpha,\beta,\gamma,\mu )} \right\rbrace,\ \hat{C} = \max_{1 \leqslant i \leqslant M, 1 \leqslant j \leqslant N}\left\lbrace \tilde{C}_{2,ij}^{(\alpha,\beta,\gamma,\mu )} \right\rbrace,
\end{align*}
we have
\begin{align*}
\mathscr{A}_\varepsilon ^{(\alpha ,\beta ,\gamma ,\mu ) }( {u_L,v^*} ) & \geqslant \sum^{M}_{i=2}\sum^{N}_{j=2}{\bar{u}}_{ij}^2 =  \left(\sum^{M}_{i=2}\sum^{N}_{j=2}{\bar{u}}_{ij}^2\right)^{\frac{1}{2}}\left(\sum^{M}_{i=2}\sum^{N}_{j=2}{\bar{u}}_{ij}^2\right)^{\frac{1}{2}} \\
& \geqslant \frac{1}{\tilde{C}\hat{C}} \left(\sum^{M}_{i=2}\sum^{N}_{j=2}{\tilde{C}}{\bar{u}}_{ij}^2\right)^{\frac{1}{2}} \left(\sum^{M}_{i=2}\sum^{N}_{j=2}{\hat{C}}{\bar{u}}_{ij}^2\right)^{\frac{1}{2}}  \\
& \cong \frac{1}{\tilde{C}\hat{C}} \left(\sum^{M}_{i=1}\sum^{N}_{j=1}{\tilde{C}}{\tilde{u}}_{ij}^2\right)^{\frac{1}{2}} \left(\sum^{M}_{i=1}\sum^{N}_{j=1}{\hat{C}}{\tilde{v}}_{ij}^2\right)^{\frac{1}{2}} \\
&  \geqslant \frac{1}{\tilde{C}\hat{C}}{\left\| {u_L}\right\|}_{B^{\frac{\alpha }{2},\frac{\beta }{2},\frac{\gamma }{2},\frac{\mu }{2}}( Q_T ) } {\left\| {v^*}\right\|}_{B^{\frac{\alpha }{2},\frac{\beta }{2},\frac{\gamma }{2},\frac{\mu }{2}}( Q_T ) }. 
\end{align*}
Letting $L \rightarrow (+\infty, +\infty) $, with the continuous of the bilinear form $\mathscr{A}_\varepsilon ^{(\alpha ,\beta ,\gamma ,\mu ) }( {\cdot,v^*} ) $, we get the inf-sup condition.

\textit{(iii) ``Transposed" inf-sup condition.}
To tackle this part, we construct formulas \eqref{vexpa1}-\eqref{vexpa3} for all $0 \neq v \in {}^0B^{\frac{\alpha }{2},\frac{\beta }{2},\frac{\gamma }{2},\frac{\mu }{2}} ( Q_T )$. Choose $0 \neq u^* \in {}_0B^{\frac{\alpha }{2},\frac{\beta }{2},\frac{\gamma }{2},\frac{\mu }{2}} ( Q_T )$ with $\bar{u}^*_{ij} = \bar{v}_{ij}, \tilde{u}^*_{ij}= \tilde{v}_{ij}$, we could get the ``transposed" inf-sup condition via the similar way with part \textit{(ii)}.
Thus, the well-posedness of problem \eqref{3-1} is proved.

Finally, owing to Cauchy-Schwarz inequality, we get the global estimate of the solution by part \textit{(ii)}
\[\left\| u \right\|_{{B^{\frac{\alpha }{2},\frac{\beta }{2},\frac{\gamma }{2},\frac{\mu }{2}}}( Q_T )}\left\| v^* \right\|_{{B^{\frac{\alpha }{2},\frac{\beta }{2},\frac{\gamma }{2},\frac{\mu }{2}}}( Q_T )}  \lesssim \mathscr{A}_\varepsilon ^{(\alpha ,\beta ,\gamma ,\mu )}( {u,v^*} ) = \left\langle {f,v^*} \right\rangle  \lesssim {\left\| f \right\|_{{{\left( {{B^{\frac{\alpha }{2},\frac{\beta }{2},\frac{\gamma }{2},\frac{\mu }{2}}}( Q_T )} \right)}^{\prime} }}}{\left\| v^* \right\|_{{B^{\frac{\alpha }{2},\frac{\beta }{2},\frac{\gamma }{2},\frac{\mu }{2}}}( Q_T )}},\]
which implies \eqref{3-2}.
\end{proof}
\end{appendix}

\section*{References}

\end{document}